\newcommand{\1}[1]{\mathbf{1}_{#1}}
\numberwithin{equation}{section}
\newtheorem{theorem}{Theorem}[section]
\newtheorem{lemma}[theorem]{Lemma}
\newtheorem{prop}[theorem]{Proposition}
\newtheorem{corollary}[theorem]{Corollary}
\theoremstyle{remark}
\newtheorem{remark}{Remark}
\newcommand\norm[1]{\left\lVert#1\right\rVert}
\newcommand{\PR}{\mathbb P}
\DeclareMathOperator{\R}{\mathbb{R}}
\DeclareMathOperator{\N}{\mathbb{N}}
\def\blfootnote{\xdef\@thefnmark{}\@footnotetext}
\begin{document}

% Put Title of Document here...
%	Title
%	Name
%	Date

\title[ERGM Extremal Behavior]{Extremal Behavior in Exponential Random Graphs}

\author[Ryan DeMuse]{Ryan DeMuse}

\address{Department of Mathematics, University of Denver, Denver, CO 80208,
USA} \email {ryan.demuse@du.edu}

%\dedicatory{\rm \today}

\begin{abstract}
Yin, Rinaldo, and Fadnavis classified the extremal behavior of the edge-triangle exponential random graph model by first taking the network size to infinity, then the parameters diverging to infinity along straight lines \cite{YRF}. Lubetzky and Zhao proposed an extension to the edge-triangle model by introducing an exponent $\gamma > 0$ on the triangle homomorphism density function \cite{LZ1}. This allows non-trivial behavior in the positive limit, which is absent in the standard edge-triangle model. The present work seeks to classify the limiting behavior of this generalized edge-triangle exponential random graph model. It is shown that for $\gamma \le 1$, the limiting set of graphons come from a special class, known as Tur\'an graphons. For $\gamma > 1$, there are large regions of the parameter space where the limit is not a Tur\'an graphon, but rather has edge density between subsequent Tur\'an graphons. Furthermore, for $\gamma$ large enough, the exact edge density of the limiting set is determined in terms of a nested radical. Utilizing a result of Reiher, intuition is given for the characterization of the extremal behavior in the generalized edge-clique model \cite{Re}.

\vskip.1truein
\noindent \textit{2010 Mathematics Subject Classification.} Primary: 05C80, Secondary: 05C35, 82B26.
\vskip.1truein
\noindent \textit{Keywords:} exponential random graphs, extremal behavior, graph limits, graphons
\end{abstract}

\maketitle

%\tableofcontents

\section{Introduction}
The abundance of large networks in the modern era has spurred the need for robust apparatuses to study their local properties, global structure, and evolution over time \cite{F1, F2, GZFA}. Network analysis has become an increasingly important area of active research from the study of interpersonal relationships \cite{JWLWCP} and social networks \cite{RPKL} to, perhaps one of the most important and imposing networks, the human brain \cite{OD}. Exponential random graphs constitute a promising class of statistical models for network analysis. They are defined by exponential families of probability distributions over graphs with sufficient statistics characterized by functions on the graph space that capture properties of interest. These functions are commonly taken as the number of edges, two-stars, triangles, or various other subgraph structures. The flexibility of exponential random graphs allowed through their sufficient statistics means that they are adept at modeling observed networks without requiring one to prescribe a method to build the network from the ground up \cite{BSMRL, BBL}. Another aspect of exponential random graph models (ERGM) that makes them excellent candidates for modeling real-world networks is the fact that they do not assume independence between edges. While edge independence is a convenient assumption in the analysis and construction of mathematical models, it is often a hinderance when modeling many important networks. Consider the example of social networks, whose popularity has skyrocketed in recent years. Suppose there are three members of a certain social network; persons $A$, $B$, and $C$. If $A$ is a friend of $B$ and $B$ is a friend of $C$, it is more likely for $A$ to be a friend of $C$ than it would be if there were no friendships between $A$ and $B$, and $B$ and $C$. This transitivity of friendship indicates that one should adopt the assumption that edges in social networks are unlikely to appear independently.

Not only is the behavior of networks with fixed size of great interest, but also networks whose size grows over time. A natural question to ask is whether one can identify the limiting structure of a network as the number of nodes increases without bound. The current work addresses this question by characterizing the extremal behavior of the generalized edge-triangle exponential random graph model defined by Lubetzky and Zhao in \cite{LZ1}. In this generalized model, an exponent is introduced on the triangle homomorphism density function that allows the model to display non-trivial limiting behavior even in the positive limit. This extends the extremal approach of Yin, Rinaldo, and Fadnavis \cite{YRF}, also referred to as the \textit{double asymptotic framework} therein, as originated by Chatterjee and Diaconis in \cite{CD1}. Yin et al., as well as Chatterjee and Diaconis, examined the extremal behavior of the edge-triangle exponential random graph model. The edge-triangle model is defined as follows. Let $\mathcal{G}_{n}$ be the space of all labeled, finite, simple graphs on $n$ vertices. Consider the standard edge-triangle exponential random graph model on $\mathcal{G}_n$ with distribution
	\begin{equation}
		\PR_{n}^{\beta}(G_n) = \exp\left( n^2 \left(\beta_1 t(K_2, G_n) + \beta_2 t(K_3, G_n) - \psi_{n} (\beta) \right) \right),
	\end{equation}
where $K_m$ is the complete graph on $m$ vertices, $G_n \in \mathcal{G}_n$, and $\psi_{n}(\beta)$ is a normalization constant. Bhamidi, Bresler, and Sly proved that when $\beta_2 > 0$, as $n \to \infty$, a typical graph drawn from the edge-triangle ERGM looks like a $G(n,p)$ random graph, or a mixture of such graphs \cite{B}. In this region of the parameter space, the edge-triangle model does not display appreciably different behavior from the standard random graph model $G(n,p)$ \cite{EG}. Chatterjee and Diaconis studied the case of $\beta_2 \to -\infty$ while $\beta_1$ remained fixed. This approach is referred to as \textit{extremal behavior} in their seminal work because of its connections to many important theorems in extremal graph theory such as Tur\'an's theorem and Erd\H{o}s-Stone. Yin et al.\ fully characterized the extremal behavior of this model by identifying the limiting behavior as the parameters $\beta_1$ and $\beta_2$ diverge along straight lines. 

Lubetzky and Zhao \cite{LZ1} proposed an extension to the standard edge-triangle ERGM by introducing an exponent $\gamma > 0$ on the triangle density as
	\begin{equation} \label{get}
		\PR_{n}^{(\beta,\gamma)} (G_n) = \exp\left( n^2 \left(\beta_1 t(K_2, G_n) + \beta_2 t(K_3, G_n)^{\gamma} - \psi_{n} (\beta) \right) \right),
	\end{equation}
where the normalization constant $\psi_{n}^{(\beta,\gamma)}$ is given by
	\begin{equation}
		\psi_{n}^{(\beta,\gamma)} = n^{-2} \log \sum_{G_n \in \mathcal{G}_n} \exp\left(\beta_1 t(K_2, G_n) + \beta_2 t(K_3, G_n)^{\gamma}\right).
	\end{equation}
Lubetzky and Zhao investigated the regions of replica symmetry and symmetry breaking for the model defined in Equation \ref{get} for finite $\beta_1, \beta_2$. \textit{Replica symmetry} occurs when a large number of copies of $K_3$ is caused by the sheer abundance of edges present, whereas \textit{symmetry breaking} describes the region of the parameter space where a smaller number of edges appear in a particular arrangement thereby leading to a large number of copies of $K_3$. They identified an open interval $\mathcal{O}$ such that for $(\beta_1,\beta_2) \in \mathcal{O}$ and $0 < \gamma < 2/3$, symmetry breaking occurs.

The analysis provided herein seeks to characterize the asymptotic extremal properties of this generalized edge-triangle exponential random graph model for the parameters $\beta_1, \beta_2$ diverging on straight lines of the form $\beta_1 = a\beta_2 + b$ for differing constants $a$ and $b$. To identify this limit, one must turn to the space of graphons, measurable functions that form the limit object of convergent graph sequences. Following the approach taken in \cite{YRF}, the set of limiting graphons is determined. Similar to the results of \cite{YRF}, it is demonstrated that for certain values of $\gamma, a,$ and $b$, the limiting graphons come from a special class known as \textit{Tur\'an graphons}. There are also large regions of the parameter space where the limiting graphons for the model defined in Equation \ref{get} have edge density strictly between consecutive Tur\'an graphons, thereby showing that there is a region of the parameter space where the generalized model displays extremal behavior different from that seen with the standard edge-triangle model. Furthermore, for large enough values of $\gamma$, the edge density of the limiting graphon is determined as a nested radical involving the model parameters. In this pursuit, another connection to extremal graph theory that is utilized is the solution to a conjecture of Lov\'asz and Simonovits \cite{LS} concerning the asymptotic lower bound on the number of copies of $K_m$ that may appear as a subgraph in a graph on $m$ vertices with some prescribed edge density $e$. This last conjecture was settled in a sequence of papers, partially by Razborov \cite{R} and Nikiforov \cite{N}, and most recently in full generality by Reiher in \cite{Re}.

\section{Background}
For $G_n \in \mathcal{G}_{n}$ and $H$ a finite simple graph, define the homomorphism density of $H$ in $G_n$ as
	\begin{equation} \label{hom1}
		t(H,G_n) = \frac{\mathsf{hom}(H,G_n)}{n^{\left|V(H)\right|}},
	\end{equation}
where $\mathsf{hom}(H,G_n)$ is the number of edge-preserving homomorphisms of $H$ into $G_n$. A map $\varphi: V(H) \to V(G_n)$ is said to be an edge-preserving homomorphism if $(i,j) \in E(H)$ implies that $(\varphi(i),\varphi(j)) \in E(G_n)$. One can think of $t(H,G_n)$ as the probability that a map chosen uniformly at random from the set $V(G_n)^{V(H)}$ is edge-preserving. Define $T_{\beta} : \mathcal{G}_{n} \to \R$ as 
	\begin{equation}
		T_{\beta}(G_n) = \beta_1 t(K_2, G_n) + \beta_2 t(K_3, G_n)^{\gamma}.
	\end{equation}
$T_{\beta}$ is often referred to as the \textit{Hamiltonian}. Note that this can also be written as
	\begin{equation*}
		T_{\beta}(G_n) = \beta_1 \frac{2 \left|E(G_n)\right|}{n^2} + \beta_2 \left(\frac{6 \left|\Delta(G_n)\right|}{n^3}\right)^{\gamma}
	\end{equation*}
where $\Delta(G_n)$ is the set of all vertex triples in $G_n$ that form a copy of $K_{3}$. 

Some of the basics of graph limit theory are now provided. Let $\mathcal{W}$ be the space of all symmetric, measurable functions $f : [0,1]^{2} \to [0,1]$, called the \textit{graphon} or \textit{graph limit} space. From any simple graph $G$ on $n$ vertices, one can construct a corresponding graphon, $f^{G}$, such that
	\begin{equation*}
		f^{G}(x,y) = \left\{ \begin{array}{ll} 1 & \text{if }\left\{\left\lceil nx \right\rceil, \left\lceil ny \right\rceil\right\} \in E(G), \\ 0 & \text{otherwise.} \\  \end{array} \right.
	\end{equation*}
It is convenient to imagine $[0,1]$ as representing a continuum of vertices where $f(x,y) = 1$ indicates an edge being present between vertices $x$ and $y$, while $f(x,y) = 0$ corresponds to the edge being absent. For any $f \in \mathcal{W}$, define the graphon homomorphism density as
	\begin{equation} \label{hom2}
		t(H,f) = \int_{[0,1]^{\left|E(H)\right|}} \prod_{\{i,j\}\in E(H)} f(x_i,x_j) \prod_{k \in V(H)} dx_{k}.
	\end{equation}
Note that $t(H,G) = t(H,f^{G})$ for any finite simple graph $H$, so Equations \ref{hom1} and \ref{hom2} are consistent. A sequence of graphs on a growing number of vertices $\{G_{n}\}$ converges if, for every finite simple graph $H$, $t(H,G_{n})$ converges to some limit $t(H)$. Lov\'asz and Szegedy showed that for any convergent sequence $\{G_{n}\}$ there is a limit object $f \in \mathcal{W}$ such that $t(H,G_{n}) \to t(H,f)$ for every $H$ \cite{LSz}. Conversely, for any graphon $f \in \mathcal{W}$, there is a sequence of finite simple graphs on a growing number of vertices with $f$ as their limit. Define the \textit{cut norm} on the graphon space $\mathcal{W}$ such that for $f \in \mathcal{W}$,
	\begin{equation*}
		\norm{f}_{\square} = \sup_{A,B \subset [0,1]} \left| \int_{A \times B} f(x,y) \,dx \,dy \right|.
	\end{equation*}
This yields a pseudometric such that $d_{\square}(f,g) = \norm{f-g}_{\square}$ for $f,g \in \mathcal{W}$. The induced metric is given by identifying graphons $f,g$ such that $f \sim g$ if they agree on a set of full measure or there exists a measure preserving bijection $\sigma: [0,1] \to [0,1]$ such that $f(x,y) = g(\sigma x, \sigma y) = g_{\sigma}(x,y)$. A quotient space $\widetilde{\mathcal{W}}$ is obtained, referred to as the \textit{reduced graphon space}, and $d_{\square}$ becomes a metric on this space, $\delta_{\square}$, by defining 
	\begin{equation*}
		\delta_{\square}\left(\tilde{f}, \tilde{g}\right) = \inf_{\tau,\sigma} d_{\square}\left( f_{\tau}, g_{\sigma} \right).
	\end{equation*}
The reduced graphon space is also known as the space of unlabeled graphons, since the measure preserving transformation $\sigma$ corresponds to a vertex relabelling of a graphon. $(\widetilde{\mathcal{W}}, \delta_{\square})$ is a compact metric space and the homomorphism density functions are continuous with respect to the cut distance \cite{LL}. $\mathcal{W}$ may be thought of as the completion of the space of all finite simple graphs with respect to the cut distance. The advantage afforded by this is that the space $\widetilde{\mathcal{W}}$ allows one to realize all finite simple graphs, regardless of the number of vertices, as elements of the same metric space. This realization allowed for many intriguing advances, such as the aforementioned development of graph convergence and related topics, as well as the formulation of a large deviation principle for the $G(n,p)$ random graph \cite{CV} that expanded the knowledge of the behavior of exponential random graph models. For more information see \cite{LL}, \cite{BCLSV1}, and \cite{BCLSV2}.

Define $\psi_{\infty}^{(\beta,\gamma)} (T_{\beta}) = \lim_{n \to \infty} \psi_{n}^{(\beta,\gamma)} (T_{\beta})$, known as the \textit{limiting normalization constant}. Let $I: [0,1] \to \R$ be
	\begin{equation*}
		I(u) = \frac{1}{2} u \log u + \frac{1}{2} (1-u) \log(1-u).
	\end{equation*}
$I$ arises as the large deviation rate function for an i.i.d.\ sequence of Bernoulli random variables and may be extended to the space $\widetilde{\mathcal{W}}$ as
	\begin{equation*}
		I(\tilde{f}) = \int_{0}^{1} \int_{0}^{1} I(f(x,y)) \,dx \,dy.
	\end{equation*}
$I$ is well-defined and lower semi-continuous on the unlabeled graphon space. $T_{\beta}$ can be transferred from a function on the graph space to one on the unlabeled graphon space by identifying every graph with its graphon representation, then considering the equivalence class of this graphon under the previously mentioned equivalence relation. Chatterjee and Diaconis determined that the limiting normalization constant can be determined as the solution to a variational problem over the space $\widetilde{\mathcal{W}}$ and has the form
	\begin{equation} \label{limnorm}
		\psi_{\infty}^{(\beta,\gamma)}(T_{\beta}) = \sup_{\tilde{f} \in \widetilde{\mathcal{W}}} \left\{ \beta_1 t(K_2,\tilde{f}) + \beta_2 t(K_3,\tilde{f})^{\gamma} - I(\tilde{f}) \right\}.
	\end{equation}
Since $I$ is lower semicontinuous on $\widetilde{\mathcal{W}}$, $T_{\beta} - I$ is upper semicontinuous and an upper semicontinuous function on a compact space is bounded and achieves its maximum. Let $\widetilde{F}^{\ast}(\beta_1,\beta_2) = (T_{\beta} - I)^{-1} \left( \left\{ \psi_{\infty}^{(\beta,\gamma)}(T_{\beta}) \right\} \right)$, the subset of the unlabeled graphon space where the variational problem \ref{limnorm} is solved. Theorem 3.2 in \cite{CD1}, also due to Chatterjee and Diaconis, demonstrates that for a graph $G_n$ drawn according to $\PR_{n}^{(\beta,\gamma)}$, $G_n$ lies close to the set $\widetilde{F}^{\ast}(\beta_1,\beta_2)$ with exponentially high probability and
	\begin{equation} \label{setconv}
		\delta_{\square}\left( \tilde{f}^{G_{n}}, \widetilde{F}^{\ast} \right) \to 0 \hspace{.25cm} \text{in probability as} \hspace{.25cm} n \to \infty.
	\end{equation}
In fact, the rate of convergence in Theorem 3.2 of \cite{CD1} being exponential, the convergence in \ref{setconv} may be strengthed to almost sure convergence using the Borel-Cantelli Lemma. Since the parameters $\beta_1$ and $\beta_2$ are diverging along straight lines with $\beta_1 = a\beta_2 + b$, write $\widetilde{F}^{\ast}(\beta_2) = \widetilde{F}^{\ast}(\beta_1,\beta_2)$. The space of graphons allows one to identify the limiting behavior of an exponential random graph model as the network size increases. The trade-off comes from the fact that the limiting set of graphons are determined up to weak isomorphism. Two unlabeled graphons $\tilde{f}$ and $\tilde{g}$ are weakly isomorphic if $t(H,\tilde{f}) = t(H,\tilde{g})$ for all finite simple graphs $H$.

\section{Preliminaries} 

Let $G_n \in \mathcal{G}_{n}$. Define $t(G_n)$ to be the vector of graph homomorphism densities of $K_2$ and $K_3$, a single edge and the triangle graph, in $G_n$, so
	\begin{equation*}
		t(G_n) = \left( \begin{array}{c} t(K_2, G_n) \\ t(K_3, G_n) \end{array}  \right) \in [0,1]^{2}.
	\end{equation*}
Using the theory of graph limits, every finite graph has a graphon representation in the space $\mathcal{W}$ and one can consider $t$ as a function on the space $\mathcal{W}$
	\begin{equation*}
		t(f) = \left( \begin{array}{c} t(K_2,f) \\ t(K_3,f) \end{array} \right),
	\end{equation*}
where $f \in \mathcal{W}$. Define $R = \left\{ t(f): f \in \mathcal{W} \right\}$, the set of all realizable values of the edge-triangle homomorphism density vector in the graphon space. Using the convention $\N_{0} = \N \cup \{0\}$, for $k \in \N_0$, set $v_{k} = t(f^{K_{k+1}})$, where $f^{K_1}$ is the identically zero graphon, and, for $k > 1$, 
	\begin{equation}
		f^{K_{k}}(x,y) = \left\{ \begin{array}{cl} 0 & \text{if } \left\lceil xk \right\rceil = \left\lceil yk \right\rceil, \\ 1 & \text{otherwise},  \end{array} \right.
	\end{equation}
where $(x,y) \in [0,1]^{2}$. Define $f^{K_k}$ as the \textit{Tur\'an graphon with $k$ classes}. Using Equation \ref{hom2}, one finds that for all $k \in \N_0$
	\begin{equation*}
		v_{k} = \left( \begin{array}{c} \left( \frac{k}{k+1} \right) \\ \left( \frac{k(k-1)}{(k+1)^2} \right)  \end{array} \right).
	\end{equation*}
Let $f(x,y) = 1$ for all $0 \le x,y \le 1$ denote the \textit{complete graphon} and $f(x,y) = 0$ for all $0 \le x,y \le 1$ be the \textit{empty graphon}.
\begin{figure}[t!]
	\centering
	\begin{subfigure}
		\centering
		\frame{\includegraphics[keepaspectratio, width=.27\textwidth]{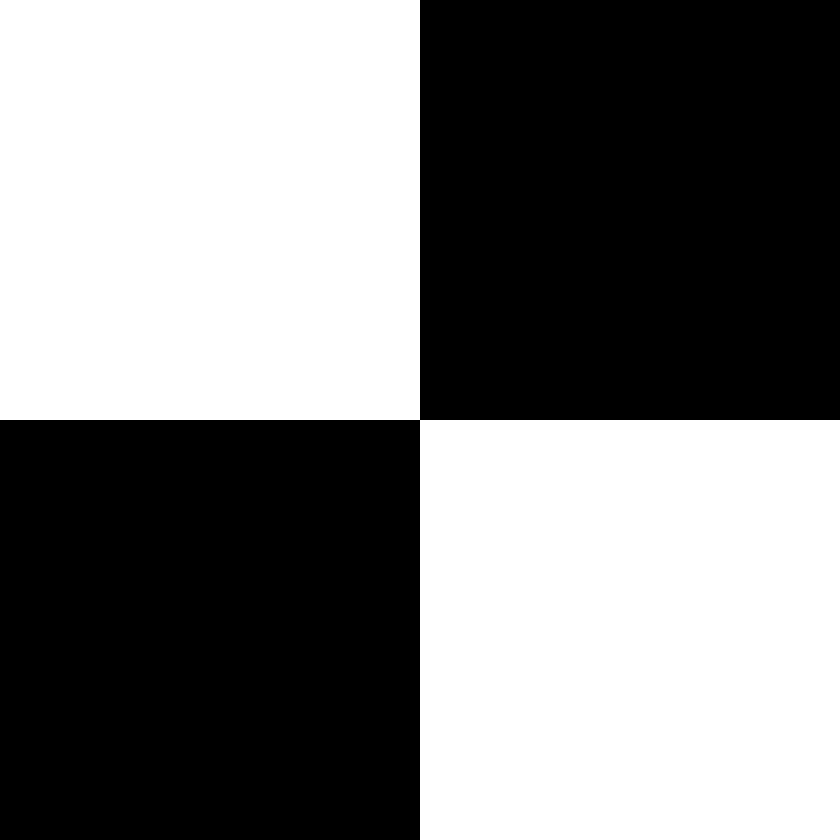}}
	\end{subfigure}
	\begin{subfigure}
		\centering
		\frame{\includegraphics[keepaspectratio, width=.27\textwidth]{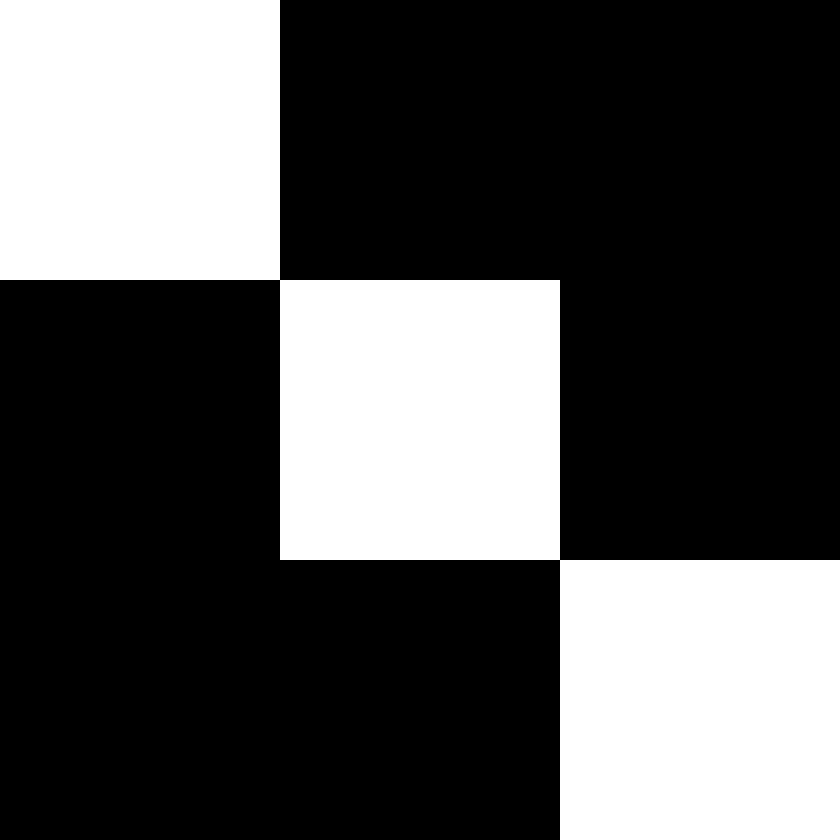}}
	\end{subfigure}
	\begin{subfigure}
		\centering
		\frame{\includegraphics[keepaspectratio, width=.27\textwidth]{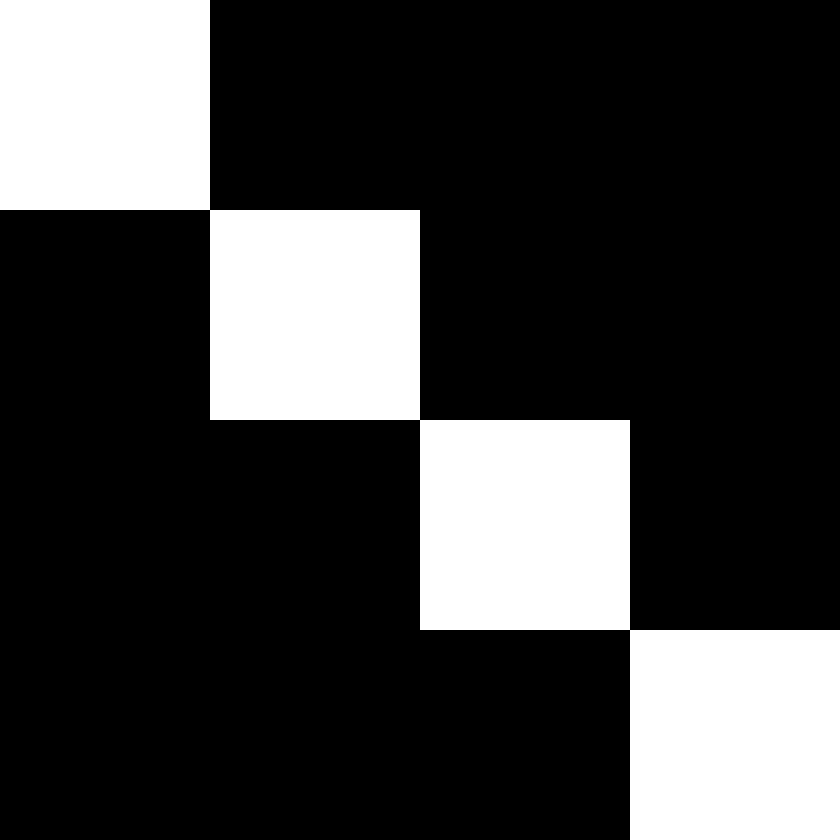}}
	\end{subfigure}
	\caption{Pixel picture representation of a T\'uran graphon with $2$, $3$, and $4$ classes respectively. Following the convention of Lov\'asz in \cite{LL}, the picture represents a function $f$ on $[0,1]^2$ where black squares have value $1$ and white have value $0$. The top left is $(0,0)$ and the bottom right is $(1,1)$.}
\end{figure}
Denote the coordinates of the vector $v_{k}$ as $e_{k}$ and $t_{k}$, so
	\begin{equation}
		e_{k} = \frac{k}{k+1} \hspace{0.3cm}\text{and}\hspace{0.3cm} t_{k} = \frac{k(k-1)}{(k+1)^2}.
	\end{equation}
The set $R$ defines the classic region of realizable edge-triangle densities. Now let $e$ denote the coordinate corresponding to the edge homomorphism density and $t$ the triangle homomorphism density. Using the Kruskal-Katona Theorem, one can derive that the upper boundary curve of $R$ is $t = e^{3 /2}$. The lower boundary of $R$ is more difficult to describe. Razborov was able to establish that, for all $k \ge 1$ and $e$ satisfying $(k-1)/k \le e \le k/(k+1)$, the triangle density $t$ is bounded below as
	\begin{equation*} \label{lb}
		t \ge \frac{ (k-1)\left( k - 2 \sqrt{k(k-e(k+1))} \right) \left( k + \sqrt{k(k-e(k+1))} \right)^2 }{k^2 (k+1)^2}
	\end{equation*}
in their seminal paper utilizing flag algebras and that this bound is tight \cite{R}. For simplicity, define $r_{k}(e)$ such that for all $k \in \N$ 
	\begin{equation} \label{raz}
		r_{k}(e) = \frac{ (k-1)\left( k - 2 \sqrt{k(k-e(k+1))} \right) \left( k + \sqrt{k(k-e(k+1))} \right)^2 }{k^2 (k+1)^2}
	\end{equation}
where $(k-1)/k \le e \le k/(k+1)$ and the endpoints of the curve $r_{k}$ are $v_{k-1}$ and $v_{k}$. Let $I_{k} = \left[\frac{k-1}{k},\frac{k}{k+1}\right]$ be the domain of $r_{k}$. Note that $r_{1}(e)$ is the constant zero function defined on the interval $I_1 = [0,1/2]$. Lemma \ref{sets} shows that the region of realizable edge-triangle densities is the closure of the homomorphism density vectors on any number of vertices.

\begin{figure}[t!]
	\centering
	\includegraphics[keepaspectratio, width=10cm]{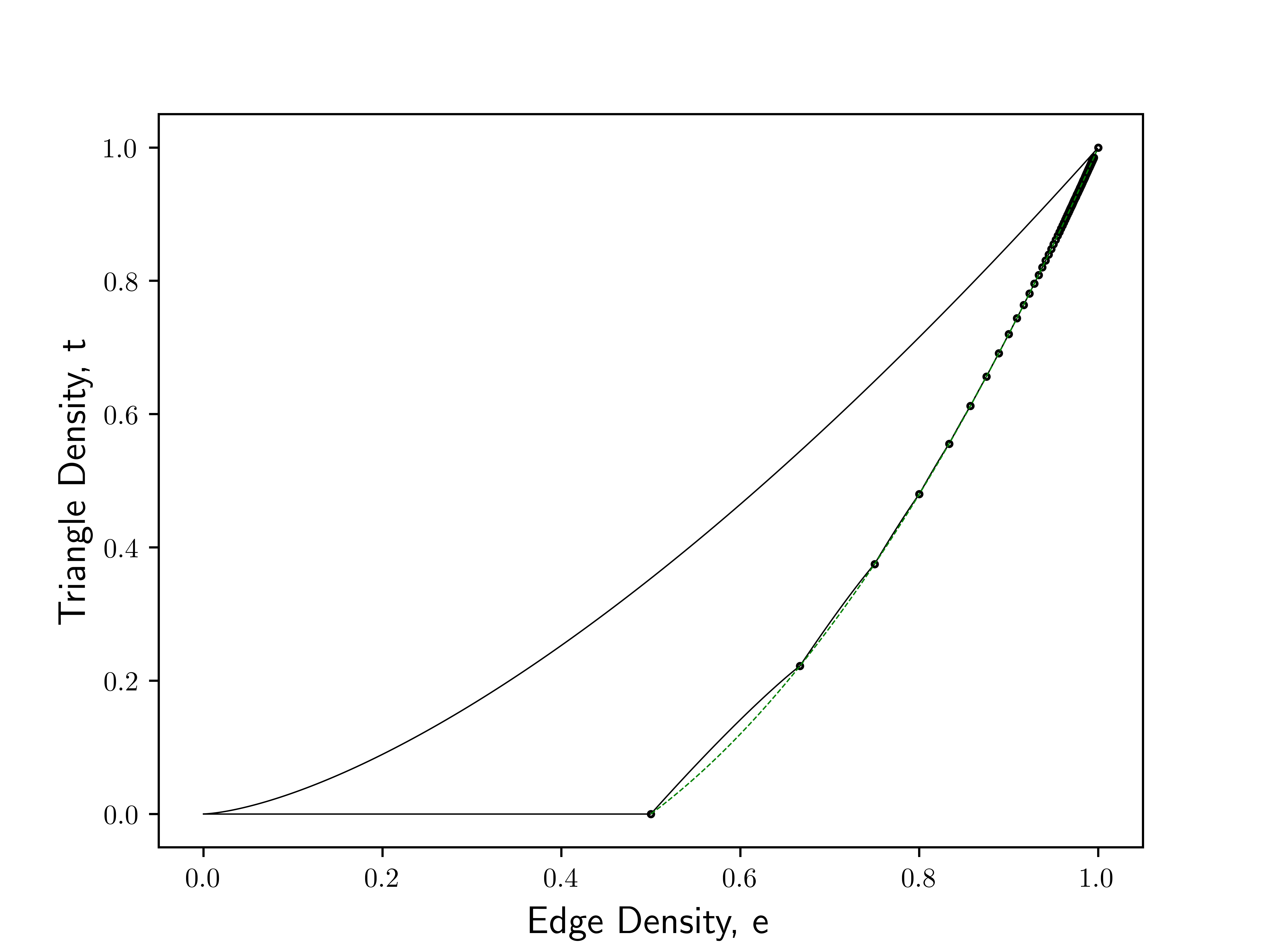}
	\caption{Graph of $R$, the region of feasible edge-triangle densities for $\gamma = 1$. The dashed line represents Goodman's bound along which the endpoints of the lower boundary segments lie. The solid line above is the one derived from Kruskal-Katona and the solid line segments below compose Razborov's bound, where the points along the lower boundary correspond to Tur\'an graphons.}
\end{figure}

\begin{lemma}[Lemma 2.1 in \cite{YRF}] \label{sets}
	Let $\mathsf{cl}(A)$ denote the topological closure of the set $A \subset \R^2$ in the usual topology. Then
		\begin{equation*}
			R = \mathsf{cl}\left( \left\{ t(G): G \in \bigcup_{n \in \N} \mathcal{G}_{n}\right\} \right).
		\end{equation*}
\end{lemma}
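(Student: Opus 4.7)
The plan is to establish the two inclusions separately, using compactness of $(\widetilde{\mathcal{W}}, \delta_{\square})$ together with continuity of the homomorphism densities $t(K_2, \cdot)$ and $t(K_3, \cdot)$, both of which were recorded in the Background section.

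First I would prove $\mathsf{cl}\bigl(\{t(G) : G \in \bigcup_n \mathcal{G}_n\}\bigr) \subseteq R$. Every finite simple graph $G$ has a graphon representative $f^{G} \in \mathcal{W}$, and by consistency of the two definitions of homomorphism density, $t(G) = t(f^{G}) \in R$. So the set $\{t(G) : G \in \bigcup_n \mathcal{G}_n\}$ is contained in $R$, and it suffices to verify that $R$ is closed in $\R^2$. Viewing $t$ as a map $\widetilde{\mathcal{W}} \to \R^2$, $t$ is well-defined on equivalence classes (weakly isomorphic graphons have the same homomorphism densities) and is continuous with respect to $\delta_{\square}$. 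Since $\widetilde{\mathcal{W}}$ is compact, its image $R$ is compact, hence closed.

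For the reverse inclusion $R \subseteq \mathsf{cl}\bigl(\{t(G) : G \in \bigcup_n \mathcal{G}_n\}\bigr)$, fix $f \in \mathcal{W}$ and choose, by the Lovász--Szegedy approximation (stated in the Background as the converse to convergence of graph sequences), a sequence of finite simple graphs $\{G_n\}$ with $\delta_{\square}(\tilde{f}^{G_n}, \tilde{f}) \to 0$. Continuity of $t$ on $\widetilde{\mathcal{W}}$ then yields $t(G_n) = t(f^{G_n}) \to t(f)$ in $\R^2$. Hence $t(f)$ lies in the closure of $\{t(G) : G \in \bigcup_n \mathcal{G}_n\}$.

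Combining the two inclusions gives the claimed equality. No step is a real obstacle here: the argument is a packaging of compactness and continuity on the unlabeled graphon space plus the Lovász--Szegedy approximation theorem, both of which have already been imported. The only mild care required is to confirm that $t$ descends to a continuous function on the quotient $\widetilde{\mathcal{W}}$, which is immediate since weakly isomorphic graphons agree on all homomorphism densities by definition.
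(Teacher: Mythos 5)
Your argument is correct and is essentially the standard proof of this fact (the paper cites it to \cite{YRF} without reproducing the proof). The forward inclusion via compactness of $\widetilde{\mathcal{W}}$ and continuity of the homomorphism density map (so that $R$ is a compact, hence closed, image), and the reverse inclusion via the Lov\'asz--Szegedy approximation of any graphon by a sequence of finite graphs together with continuity of $t$, are exactly the two ingredients the Background section imports and exactly how the cited lemma is established. No gap.
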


Let $v_{k}^{\gamma}$ be the points 
	\begin{equation*}
		v_{k}^{\gamma} = \left( \begin{array}{c} \left( \frac{k}{k+1} \right) \\ \left( \frac{k(k-1)}{(k+1)^2} \right)^{\gamma}  \end{array} \right),
	\end{equation*}
where $k \in \N_{0}$. A lower bound for the triangle density was proven by Goodman in \cite{GB}, which states in the language of homomorphism density functions that
	\begin{equation}
		t(K_{3},G) \ge t(K_2,G)\left(2t(K_2,G) - 1\right)
	\end{equation}
for any finite simple graph $G$. Goodman's bound is more crude than Razborov's, but the simplicity of Goodman's bound will prove useful for several of the results presented here. With this bound in mind, define $l(e)$ for $e \in [0,1]$ as
	\begin{equation} \label{GoodmanFunc}
		l(e) = (e(2e-1))^{\gamma}.
	\end{equation}
The points $v_{k}^{\gamma}$ lie on the graph of $l$. An important quantity in the analysis of the asymptotic structure of the probability measures $\PR_{n}^{(\beta,\gamma)}$ will be the slopes of the line segements that connect the adjacent points $v_{k-1}^{\gamma}$ and $v_{k}^{\gamma}$. Let $s_{k}(\gamma)$ be the slope of the line passing through the points $v_{k-1}^{\gamma}$ and $v_{k}^{\gamma}$, where
	\begin{equation}
		s_{k}(\gamma) = \frac{t_{k}^{\gamma} - t_{k-1}^{\gamma}}{e_{k} - e_{k-1}} = k(k+1)\left( t_{k}^{\gamma} - t_{k-1}^{\gamma} \right).
	\end{equation}
For brevity, suppress the dependence on $\gamma$ in $s_{k}(\gamma)$ so that $s_{k} = s_{k}(\gamma)$. Define vectors $\sigma_{k}(\gamma)$ as
	\begin{equation}
		\sigma_{k}(\gamma) = \left\{ \begin{array}{ll}
				(0,-1) & k = 0, \\
				\left( 1, -(s_{k}(\gamma))^{-1} \right) & k = 1,2,3,\dots \\
				\end{array}
			\right.
	\end{equation}
Similar to \cite{YRF}, these vectors will be referred to as the \textit{critical directions} of the generalized edge-triangle model. They will play an important role in determing the limiting behavior of the model. Lemma \ref{sk} characterizes the behavior of the sequence $\{s_{k}\}$ based on the values of the parameter $\gamma > 0$.

\begin{lemma} \label{sk}
The slopes of the line segments connecting adjacent points $v_{k-1}^{\gamma}$ and $v_{k}^{\gamma}$ limit to $3\gamma$ as $k \to \infty$. Furthermore, the following hold:
	\begin{itemize}
		\item For $0 < \gamma \le 5/9$, $\{s_{k}\}$ is strictly decreasing.
		\item For $5/9 < \gamma \le \log_{\frac{27}{16}}(3/2)$, $\{s_{k}\}$ is first decreasing, then transitions to increasing.
		\item For $\gamma > \log_{\frac{27}{16}}(3/2)$, $\{s_{k}\}$ is strictly increasing.
	\end{itemize} 
\end{lemma}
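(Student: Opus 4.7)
The plan is to first establish the limiting value $3\gamma$ and its $1/k$ correction (which reveals $5/9$ as the asymptotic monotonicity threshold), then to analyze $s_{k+1}(\gamma) - s_k(\gamma)$ as a function of $\gamma$ to pin down the second threshold $\log_{27/16}(3/2)$. Throughout I interpret the sequence as starting at $k=2$ (the value $s_1 = 0$ coming from $t_0 = t_1 = 0$ is degenerate).

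First, direct algebra gives $e_k - e_{k-1} = 1/[k(k+1)]$ and $t_k - t_{k-1} = (k-1)(3k+2)/[k^2(k+1)^2]$. Applying the mean value theorem to $x \mapsto x^\gamma$ on $[t_{k-1}, t_k]$ yields $t_k^\gamma - t_{k-1}^\gamma = \gamma\, \xi_k^{\gamma-1}(t_k - t_{k-1})$ for some $\xi_k \in (t_{k-1}, t_k)$, so
\[
s_k = \gamma\, \xi_k^{\gamma-1}\cdot \frac{(k-1)(3k+2)}{k(k+1)}.
\]
Since $\xi_k \to 1$ and the rational factor tends to $3$, we conclude $s_k \to 3\gamma$. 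Refining with $1 - t_k = 3/(k+1) - 2/(k+1)^2$ produces the expansion
\[
s_k = 3\gamma - \frac{\gamma(9\gamma - 5)}{k} + O\!\left(\frac{1}{k^2}\right),
\]
so for large $k$ the sign of $s_{k+1} - s_k$ agrees with that of $\gamma(9\gamma - 5)$. Thus $\{s_k\}$ is eventually increasing for $\gamma > 5/9$ and eventually decreasing for $\gamma < 5/9$, identifying $5/9$ as the asymptotic monotonicity threshold.

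For the finite-$k$ behavior, I would define $\phi_k(\gamma) := (k+2)\, t_{k+1}^\gamma - (2k+2)\, t_k^\gamma + k\, t_{k-1}^\gamma$, so that $s_{k+1}(\gamma) - s_k(\gamma) = (k+1)\, \phi_k(\gamma)$, and show that $\phi_k$ admits a unique positive zero $\gamma_k^*$ through which it is strictly increasing in $\gamma$. For $k=2$, $t_1=0$ collapses this to $\phi_2(\gamma) = 2(2/9)^\gamma\bigl[2(27/16)^\gamma - 3\bigr]$, giving $\gamma_2^* = \log_{27/16}(3/2)$ directly; for $k \ge 3$, strict monotonicity in $\gamma$ should follow from analysis of $\phi_k'(\gamma)$ exploiting $t_{k-1} < t_k < t_{k+1}$.

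The main obstacle is to show $\{\gamma_k^*\}_{k \ge 2}$ is strictly decreasing with limit $5/9$. The limit is forced by the asymptotic expansion (the $1/k$ coefficient of $s_{k+1} - s_k$ vanishes precisely at $\gamma = 5/9$). For strict decrease I would attempt to prove $\phi_{k+1}(\gamma_k^*) > 0$, which by monotonicity of $\phi_{k+1}$ in $\gamma$ forces $\gamma_{k+1}^* < \gamma_k^*$; this reduces to an explicit inequality among $t_{k-1}, t_k, t_{k+1}, t_{k+2}$ tractable from their rational forms. Once this is granted, the three cases fall out: for $\gamma \le 5/9 < \gamma_k^*$ every $\phi_k(\gamma)$ is negative (with the boundary case $\gamma = 5/9$ requiring the next-order coefficient in the asymptotic expansion to confirm strict decrease), so $\{s_k\}$ is strictly decreasing; for $\gamma > \gamma_2^* = \log_{27/16}(3/2)$ every $\phi_k(\gamma)$ is positive, so $\{s_k\}$ is strictly increasing; and for $5/9 < \gamma \le \gamma_2^*$, the unique index $k^*$ with $\gamma_{k^*+1}^* < \gamma \le \gamma_{k^*}^*$ produces the single transition from decreasing to increasing.
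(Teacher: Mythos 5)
Your proposal takes a genuinely different route from the paper's. The paper observes that $v_k^{\gamma}$ lies on the graph of $l(e) = (e(2e-1))^{\gamma}$, applies the mean value theorem so that $s_k = l^{\prime}(p_k)$ with $p_k \in (e_{k-1},e_k)$, and reads off the monotonicity of $\{s_k\}$ from the sign of $l^{\prime\prime}$, whose quadratic factor has explicit roots $\frac{1}{4}\left(1 \pm 1/\sqrt{2\gamma-1}\right)$; the threshold $5/9$ is precisely where the positive root exits $(1/2,1)$ at $e=1$, and the paper then checks $s_2 = s_3$ by hand to tighten the ``strictly increasing'' regime from $\gamma \ge 1$ down to $\gamma > \log_{27/16}(3/2)$. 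You instead work directly with the discrete differences, writing $s_{k+1} - s_k = (k+1)\phi_k(\gamma)$, and with an asymptotic expansion. Your expansion $s_k = 3\gamma - \gamma(9\gamma-5)/k + O(1/k^2)$ is correct and cleanly reveals $5/9$ as the eventual monotonicity threshold, and your computation of $\phi_2$ pins down $\gamma_2^{\ast} = \log_{27/16}(3/2)$ directly rather than as an after-the-fact check. Both proofs are organized around the same underlying convexity phenomenon; yours is more ``discrete,'' the paper's more ``continuous.''

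That said, the proposal has genuine gaps that you flag but do not close. First, the claim that $\phi_k$ has a unique positive zero for $k \ge 3$, crossed transversally, is asserted but not proved. Since $\phi_k(0) = 0$, $\phi_k^{\prime}(0) < 0$, and $\phi_k(\gamma) > 0$ for large $\gamma$, a positive zero certainly exists; but uniqueness requires something like the Laguerre--Descartes rule for exponential sums (three terms with sign pattern $+,-,+$ admit at most two real zeros in $\gamma$), which you do not invoke. Second, and most seriously, the strict monotone decrease of $\{\gamma_k^{\ast}\}$ is the load-bearing step of the whole architecture---without it you cannot partition the $\gamma$-axis into exactly the three stated regimes---and ``this reduces to an explicit inequality\ldots tractable from their rational forms'' is a plan, not an argument; the required inequality $\phi_{k+1}(\gamma_k^{\ast}) > 0$ is nontrivial because $\gamma_k^{\ast}$ itself is defined implicitly. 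Third, the boundary case $\gamma = 5/9$ is genuinely delicate in your framework: there the leading $1/k$ coefficient of $s_{k+1} - s_k$ vanishes, so the sign is controlled by the $1/k^2$ correction, which you neither compute nor bound. The paper avoids this particular difficulty because it computes $l^{\prime\prime}$ exactly rather than asymptotically and so sees the root hit the endpoint $e=1$ at $\gamma = 5/9$. Until these three points are settled, the proposal is a plausible outline rather than a proof.
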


	\begin{proof}
		As previously mentioned, $v_{k}^{\gamma}$ lies on the graph of $l$. By the mean value theorem, for each $k \ge 2$, there exists a real number $p_{k}$ with $e_{k-1} < p_k < e_{k}$ such that $s_{k} = l^{\prime}(p_{k})$.
		Since $e_{k-1} < p_k < e_{k}$, $p_{k} \to 1$ as $k \to \infty$ and by the continuity of the derivative, $l'(p_{k}) \to l'(1)$. Therefore $s_{k} \to 3\gamma$.
		Thus the monotonicity of $\{s_k\}$ relies on the monotonicity of $l^{\prime}(e)$, and $l^{\prime \prime}$ may be written as
			\begin{equation*}
				l^{\prime \prime}(e) = \gamma (2e^2 - e)^{\gamma - 2} \left[ \left( 16(\gamma - 1) + 8 \right) e^2 - \left( 8(\gamma - 1) + 4 \right) e + (\gamma - 1) \right].
			\end{equation*}
		For $\gamma \le 1/2$, $l^{\prime}$ is strictly decreasing.
		If $\gamma > 1/2$, the quadratic piece of $l^{\prime \prime}$ can be factored with roots
			\begin{equation} \label{roots}
				x_1, x_2 = \frac{1}{4}\left( 1 \pm \frac{1}{\sqrt{2\gamma - 1}} \right)
			\end{equation}
		taking $x_1$ as the negative and $x_2$ the positive. The root $x_2 \in (1/2,1)$ when $5/9 < \gamma < 1$, which leads to a change in monotonicity for the sequence $\{s_k\}$ in this region of $\gamma$ values. If $\gamma < 5/9$, the sequence is strictly decreasing. Similarly for $\gamma \ge 1$, $\left\{ s_k \right\}$ is strictly increasing. By examining when $s_2 = s_3$ one can improve the region where the sequence is strictly increasing to $\gamma > \log_{\frac{27}{16}}(3/2)$.
	\end{proof}

For the generalized edge-triangle setting, the functions $e^{3\gamma / 2}$ and
	\begin{equation} \label{LowerBoundaryCurve}
		r(e,\gamma) = \sum_{k=1}^{\infty} r_{k}(e)^{\gamma} \1{I_k}(e)
	\end{equation}
will play important roles along the upper and lower boundary functions of the region $R$, respectively. Here $I_{k} = [e_{k-1}, e_{k}]$ and $\1{A}$ is the indicator function on the set $A$. The functions $r_{k}(e)^{\gamma}$ for a fixed $k$ are referred to as the segments of the curve $r$. $r$ is everywhere continuous on $[0,1]$ and differentiable everywhere except the points $e_k$. For $\gamma \le 1$, Lemma \ref{sk} identifies three distinct regions of behavior for the slopes $s_{k}$. In this region $r$ is concave down between the endpoints of its segments. Turning to the case where $\gamma > 1$ becomes more difficult. The segments $r_{k}^{\gamma}$ sequentially exhibit an inflection point in their domain as $\gamma$ increases. For each $\gamma > 1$, there are only finitely many lower boundary curve segments that display a change in concavity. In particular, the curves $r_{k}^{\gamma}$ such that $2 \le k \le \left\lfloor 2\left(3\gamma - 2\right) \right\rfloor$ will have a change in concavity in their respective domains and for $k > \left\lfloor 2\left(3\gamma - 2\right) \right\rfloor$, the segments $r_{k}^{\gamma}$ will be concave down.

\begin{lemma} \label{inflect}
	If $\gamma > (4+k)/6$, then the lower boundary segment $r_{k}^{\gamma}$ with domain $[e_{k-1}, e_{k}]$ changes concavity at $i_{k}$ where $i_{k} \in \left( e_{k-1}, e_{k} \right)$ and 
		\begin{equation}
			i_{k} = \frac{k}{k+1} \left( 1 - \left( \frac{1}{2(3\gamma - 2)} \right)^{2} \right).
		\end{equation}
	For $e < i_{k}$, $r_{k}^{\gamma}$ is concave up and for $e > i_{k}$, $r_{k}^{\gamma}$ is concave down. Note that for $\gamma \le 1$, $r_{k}^{\gamma}$ is strictly concave down for all $k \ge 2$.
\end{lemma}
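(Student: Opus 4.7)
The plan is to reduce the problem to a one-variable calculus exercise by introducing the substitution $u = \sqrt{k(k-e(k+1))}$, which rationalizes the nested radicals in $r_k(e)$. From $u^2 = k^2 - k(k+1)e$ one gets $e = \frac{k}{k+1} - \frac{u^2}{k(k+1)}$, and a direct check shows $u=1$ at $e=e_{k-1}$ and $u=0$ at $e=e_k$, so $u$ is a smooth bijection from $[e_{k-1},e_k]$ onto $[0,1]$ (reversing orientation). Expanding $(k-2u)(k+u)^2 = k^3 - 3ku^2 - 2u^3$, one obtains the clean polynomial expression
\begin{equation*}
    r_k = \frac{(k-1)(k^3 - 3ku^2 - 2u^3)}{k^2(k+1)^2}.
\end{equation*}

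Next I would compute the first two derivatives of $r_k$ with respect to $e$ via the chain rule, using $\frac{du}{de} = -\frac{k(k+1)}{2u}$. A short calculation factors out $-6u(k-1)(k+u)$ from $dr_k/du$ and yields the remarkably simple formulas
\begin{equation*}
    r_k'(e) = \frac{3(k-1)(k+u)}{k(k+1)}, \qquad r_k''(e) = -\frac{3(k-1)}{2u}.
\end{equation*}
The inflection point of $r_k^{\gamma}$ is then determined by the standard identity $(r_k^{\gamma})'' = \gamma\, r_k^{\gamma-2}\bigl[(\gamma-1)(r_k')^2 + r_k r_k''\bigr]$, so the problem reduces to finding the zeros of the bracket.

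Substituting the two derivatives into the bracket and pulling out the common factor $\frac{(k-1)^2(k+u)^2}{k^2(k+1)^2}$ leaves a scalar expression proportional to $2u(3\gamma-2) - k$. Solving gives the unique critical value $u_* = \frac{k}{2(3\gamma-2)}$, and converting back via $e = \frac{k}{k+1} - \frac{u_*^2}{k(k+1)}$ reproduces exactly the formula claimed for $i_k$. The condition that $i_k$ lies strictly inside $(e_{k-1}, e_k)$ is equivalent to $u_* \in (0,1)$; positivity forces $3\gamma - 2 > 0$, while $u_* < 1$ is equivalent to $2(3\gamma-2) > k$, i.e.\ $\gamma > (k+4)/6$, which automatically implies the first inequality for $k \ge 1$.

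For the concavity statement, I would simply track the sign of $2u(3\gamma-2) - k$ as $u$ varies: it is positive for $u > u_*$ (i.e.\ $e < i_k$) and negative for $u < u_*$ (i.e.\ $e > i_k$). Since the common prefactor $\gamma r_k^{\gamma-2} \frac{(k-1)^2(k+u)^2}{2u\,k^2(k+1)^2}$ is strictly positive on the open interval, this determines the sign of $(r_k^{\gamma})''$ directly and gives concave up to the left of $i_k$ and concave down to the right. The remark for $\gamma \le 1$ follows immediately, since in that case $u_*$ is either negative or undefined, so the bracket cannot vanish on $(0,1)$ and $(r_k^{\gamma})''$ keeps a single sign (negative) throughout. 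The main obstacle I anticipate is purely algebraic bookkeeping in the derivative computations; the substitution makes the whole argument essentially routine, so I would emphasize that step clearly at the outset.
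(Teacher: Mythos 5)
Your substitution $u=\sqrt{k(k-e(k+1))}$ is just a rescaling of the paper's $z = u/k$, and the overall strategy — rationalize the radical, compute $(r_k^\gamma)''$, factor, and read off the sign — is the same as the paper's (your careful chain-rule bookkeeping with respect to $e$ is in fact cleaner: the paper writes $\tfrac{d^2}{dz^2}$ where it evidently means $\tfrac{d^2}{de^2}$ expressed through $z$, since a genuine $z$-derivative would give $z = (6\gamma-2)^{-1/2}$, not $z = 1/(2(3\gamma-2))$). The one small slip is in your final sentence: for $2/3 < \gamma \le 1$ the value $u_* = \tfrac{k}{2(3\gamma-2)}$ is neither negative nor undefined — it is positive but satisfies $u_* \ge k/2 \ge 1$ for $k\ge 2$, so it still lies outside the open interval $(0,1)$; you should state that, rather than "negative or undefined," to cover the full range $\gamma \le 1$.
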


	\begin{proof}
		Making the substitution 
			\begin{equation*}
				z = \frac{\sqrt{k(k-e(k+1))}}{k}
			\end{equation*}
		into $r_{k}(e)$ for $(k-1)/k < e < k/(k+1)$, one obtains
			\begin{equation*}
				\frac{d^{2}}{dz^{2}}\left(r_{k}^{\gamma}(z)\right) = \frac{3\gamma(k-1)^2}{2z(k+1)^2} r_{k}(z)^{\gamma - 2} \left[ (z+1)^{2} \left[ (6\gamma - 4)z - 1 \right] \right].
			\end{equation*}
		Since $z \ge 0$, there is a unique inflection point at $z = 1/(2(3\gamma-2))$, which corresponds to the point $i_{k}$ when unwrapping the substitution back to the variable $e$. The concavity properties of $r_{k}^{\gamma}$ follow. 
	\end{proof}

\section{Generalized Model \texorpdfstring{$\gamma \le 1$}{g<1}} \label{<1}

Now that some preliminaries about the important quantities involved have been established, the extremal behvior for the positive $\beta_2$ limit is investigated. A brief overview of the method to identify the extremal behavior of this model is now given. Fix $\gamma > 0$. The limiting normalization constant is determined through the variational problem defined in Equation \ref{limnorm}. Furthermore, for $G_{n}$ drawn from the distribution in Equation \ref{get}, $\delta_{\square}( \tilde{f}^{G_{n}}, \widetilde{F}^{\ast}) \to 0$ in probability as $n \to \infty$, where $\widetilde{F}^{\ast}$ is the set of graphons that maximize Equation \ref{limnorm}. As $\beta_2$ diverges, the first step is to identify whether the supremum will lie on the upper or lower boundary curve of the region $R$. If $\beta_2$ is positive, the supremum will lie on the upper curve. If $\beta_2$ is negative, the supremum becomes an infimum and the solution will lie on the lower boundary curve. As $\beta_2$ diverges, every element of $\widetilde{F}^{\ast}$ is close to the set of maximizers, $\widetilde{U}$, of Equation \ref{limnorm}.

Theorem \ref{yinrinfad} appeared in \cite{YRF} and is included for completeness. First let $n \to \infty$, then $\beta_2 \to \infty$. As $\beta_2 \to \infty$, the solution to the variational problem for the limiting normalization constant is realized along the upper boundary curve of the region $R$. This indicates that in the positive limit for $\beta_2$, the generalized model displays symmetry breaking. For a proof, see Theorem \ref{rclique} as this is a special case for $s = 3$.

\begin{theorem} [Theorem 6.1 in \cite{YRF}] \label{yinrinfad}
	Consider the generalized edge-triangle exponential random graph model defined in Equation \ref{get}. Let $\beta_1 = a\beta_2 + b$. Then
		\begin{equation}
			\lim_{\beta_2 \to \infty} \sup_{\tilde{f} \in \widetilde{F}^{\ast}(\beta_2)} \left\{ \delta_{\square} \left( \tilde{f}, \widetilde{U} \right) \right\} = 0, 
		\end{equation}
	where for $\gamma \ge 2/3$, the set $U \subset \mathcal{W}$ is:
		\begin{itemize}
			\item $U = \left\{ 0 \right\}$ if $a < -1$ or $a = -1$ and $b < 0$,
			\item $U = \left\{ 0,1 \right\}$ if $a = -1$ and $b = 0$,
			\item $U = \left\{ 1 \right\}$ if $a > -1$ or $a = -1$ and $b > 0$;
		\end{itemize}
	and for $\gamma < 2/3$, the set $U \subset \mathcal{W}$ is:
		\begin{itemize}
			\item $U = \left\{ 1 \right\}$ if $a \ge -3\gamma / 2$, and
			\item $U = \left\{ f \right\}$ if $a < -3\gamma / 2$,
		\end{itemize}
	where 
		\begin{equation}
			f(x,y) = \left\{  \begin{array}{ll} 1 & 0 \le x,y \le \left( \frac{-2a}{3\gamma} \right)^{\frac{1}{3\gamma - 2}} \\ 0 & \text{otherwise.} \end{array} \right.
		\end{equation}
\end{theorem}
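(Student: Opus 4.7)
The plan is to analyze the variational problem \ref{limnorm} directly in the large-$\beta_2$ regime. Writing $e = t(K_2,\tilde{f})$ and $t = t(K_3,\tilde{f})$, and substituting $\beta_1 = a\beta_2 + b$, the objective can be rewritten as
\begin{equation*}
\beta_1 e + \beta_2 t^{\gamma} - I(\tilde{f}) = \beta_2\bigl(a e + t^{\gamma}\bigr) + \bigl(b e - I(\tilde{f})\bigr).
\end{equation*}
Since $I$ takes values in $[-\tfrac{1}{2}\log 2, 0]$ on $\widetilde{\mathcal{W}}$ and $e \in [0,1]$, the second bracket is uniformly $O(1)$ in $\tilde{f}$. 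Thus, to leading order as $\beta_2 \to \infty$, it suffices to maximize $h(e,t) = a e + t^{\gamma}$ over the feasible region $R$, and the $b e - I(\tilde{f})$ term breaks ties only in the degenerate cases.

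Because $\beta_2>0$ and $R$ is bounded above by the Kruskal--Katona curve $t = e^{3/2}$, the optimum of $h$ lies on this upper boundary. The problem therefore reduces to maximizing the one-variable function $\phi(e) = a e + e^{3\gamma/2}$ on $[0,1]$, with $\phi'(e) = a + \tfrac{3\gamma}{2} e^{3\gamma/2 - 1}$. When $\gamma \ge 2/3$, $\phi$ is convex, so the maximum is attained at an endpoint; comparing $\phi(0) = 0$ to $\phi(1) = a + 1$ yields the three subcases $a < -1$, $a = -1$, $a > -1$. When $\gamma < 2/3$, $\phi$ is strictly concave with $\phi'(0^+) = +\infty$ and $\phi'(1) = a + \tfrac{3\gamma}{2}$; either the derivative is nonnegative throughout $[0,1]$ (giving the maximizer $e = 1$), or else there is a unique interior critical point $e^{\ast} = \bigl(-\tfrac{2a}{3\gamma}\bigr)^{2/(3\gamma - 2)}$, which matches the edge density displayed in the theorem.

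The next step is to realize the optimal values of $e$ by explicit graphons. The equality case of Kruskal--Katona forces any maximizer with upper-boundary edge density $e^{\ast} \in (0,1)$ to agree, up to measure-preserving rearrangement, with the indicator graphon $\1{[0,c]^{2}}$ for $c = \sqrt{e^{\ast}}$, which is precisely the $f$ displayed in the theorem. The endpoints $e = 0$ and $e = 1$ correspond to the empty and complete graphons. In the tie case $a = -1$ with $\gamma \ge 2/3$, both $\tilde{f} = 0$ and $\tilde{f} = 1$ satisfy $I(\tilde{f}) = 0$, and $b e$ evaluates to $0$ and $b$ respectively; the sign of $b$ then selects the correct member of $\{0,1\}$, while $b = 0$ leaves both as maximizers.

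The main obstacle is upgrading this value-level analysis to convergence of the maximizing sets in the cut metric. I would invoke compactness of $(\widetilde{\mathcal{W}},\delta_{\square})$ together with the joint continuity of $t(K_{2},\cdot)$ and $t(K_{3},\cdot)$ and the lower semicontinuity of $I$: any subsequential cut-distance limit point of graphons $\tilde{f}_n \in \widetilde{F}^{\ast}(\beta_2^{(n)})$ with $\beta_2^{(n)} \to \infty$ must attain the leading-order optimum of $h$ on $R$, and hence by the Kruskal--Katona equality characterization must lie in $\widetilde{U}$. Applying this along every subsequence shows that $\sup_{\tilde{f} \in \widetilde{F}^{\ast}(\beta_2)} \delta_{\square}(\tilde{f},\widetilde{U}) \to 0$, which is the desired conclusion.
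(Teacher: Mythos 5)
Your proposal is correct and takes essentially the same route as the paper, which proves this statement as the $s=3$ specialization of Theorem \ref{rclique}: rewrite the objective as $\beta_2(ae+t^{\gamma})+(be-I)$, use Kruskal--Katona to pass to the upper boundary $t=e^{3/2}$, reduce to maximizing $ae+e^{3\gamma/2}$ on $[0,1]$, and split on the sign of the second derivative at the threshold $\gamma=2/3$. You additionally spell out the compactness/subsequence argument for upgrading the optimization to $\delta_{\square}$-convergence of the maximizing sets and the Kruskal--Katona equality characterization of the extremal graphon, steps the paper treats as standard from the Chatterjee--Diaconis and Yin--Rinaldo--Fadnavis framework and leaves implicit.
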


Consider next the limit along horizontal and vertical lines. Along horizontal lines, $\beta_2$ is fixed and $\beta_1$ is allowed to diverge to $\infty$ or $-\infty$. In the supremum of Equation \ref{limnorm}, $(\beta_2 t(K_3,\tilde{f}) - I(\tilde{f}))$ is bounded. Thus as $\beta_1 \to \infty$, the limit is complete and, for $\beta_1 \to -\infty$, the limiting graphon is the empty graphon. With respect to vertical lines, a similar result to Theorem 7.1 of \cite{CD1} holds for the model described in Equation \ref{get}. The reason this result is included is twofold. Firstly, it is a slight generalization on the original statement of the Theorem as it applies to a larger class of models. Secondly, as was the case in \cite{YRF}, this result describes the limiting behavior of the model defined in Equation \ref{get} along the vertical directions with $a = 0$ in the specific case of $H = K_3$. The proof mimics that of Theorem 3.2 in Section 7 of \cite{YRF}. The analogous case of the parameters diverging along horizontal lines requires brief mention as well.

\begin{theorem}
	Consider the exponential random graph model
	\begin{equation} \label{get1}
		\PR_{n}^{(\beta,\gamma)}(G_n) = \exp\left( n^2 \left(\beta_1 t(K_2, G_n) + \beta_2 t(H, G_n)^{\gamma} - \psi_{n} (\beta) \right) \right)
	\end{equation}
	with $H$ an arbitrary graph different from $K_2$. Fix $\beta_1$. Let $r = \chi(H)$ be the chromatic number $H$. Let $p = e^{2\beta_1}/(1+e^{2\beta_1})$. Then
		\begin{equation}
			\lim_{\beta_2 \to -\infty} \sup_{\tilde{f} \in \tilde{F}^{\ast}(\beta_2)} \left\{ \delta_{\square}(\tilde{f},\widetilde{U}) \right\} = 0,
		\end{equation}
	where $U = \{pf^{K_{r-1}}\}$.
\end{theorem}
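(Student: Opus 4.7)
The plan is to follow the compactness/variational argument of Theorem 3.2 of Section 7 in \cite{YRF}, generalized from $H = K_3$ to an arbitrary $H \neq K_2$. First I verify the candidate: the Tur\'an graphon $f^{K_{r-1}}$ vanishes on the diagonal blocks of an $(r-1)$-class equipartition of $[0,1]$, so $pf^{K_{r-1}}$ is $(r-1)$-colorable, and since $\chi(H) = r > r-1$, no homomorphism from $H$ avoids mapping two adjacent vertices to the same block. Hence $t(H, pf^{K_{r-1}}) = 0$, and a direct calculation gives
\[
t(K_2, pf^{K_{r-1}}) = \frac{p(r-2)}{r-1}, \qquad I(pf^{K_{r-1}}) = \frac{r-2}{r-1}\, I(p).
\]
Setting $\phi(u) = \beta_1 u - I(u)$, one checks that $p = e^{2\beta_1}/(1+e^{2\beta_1})$ is the unique maximizer of $\phi$ on $[0,1]$, with value $\phi(p) = \tfrac{1}{2}\log(1 + e^{2\beta_1}) > 0$. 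The objective at $pf^{K_{r-1}}$ therefore equals $\tfrac{r-2}{r-1}\phi(p)$, which serves as a lower bound for $\psi_\infty^{(\beta,\gamma)}(T_\beta)$.

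Suppose toward contradiction that the conclusion fails, so there exist $\epsilon > 0$, a sequence $\beta_2^{(k)} \to -\infty$, and $\tilde{f}_k \in \widetilde{F}^{\ast}(\beta_2^{(k)})$ with $\delta_\square(\tilde{f}_k, pf^{K_{r-1}}) \geq \epsilon$. Compactness of $(\widetilde{\mathcal{W}}, \delta_\square)$ yields a convergent subsequence $\tilde{f}_k \to \tilde{f}_\infty$. The optimality inequality
\[
\beta_1 t(K_2, \tilde{f}_k) + \beta_2^{(k)} t(H, \tilde{f}_k)^\gamma - I(\tilde{f}_k) \geq \frac{r-2}{r-1}\phi(p),
\]
together with the uniform boundedness of $\beta_1 t(K_2, \tilde{f}_k)$ and $-I(\tilde{f}_k)$ (since $\beta_1$ is fixed, $t(K_2,\cdot) \in [0,1]$, and $-I \in [0, \tfrac{\log 2}{2}]$) and the divergence $\beta_2^{(k)} \to -\infty$, forces $t(H, \tilde{f}_k) \to 0$. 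Continuity of $t(H, \cdot)$ in the cut metric then gives $t(H, \tilde{f}_\infty) = 0$.

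The crucial structural input is the graphon-theoretic statement that $t(H, W) = 0$ with $\chi(H) = r$ implies $W$ is $(r-1)$-colorable, i.e., there exists a measurable partition $[0,1] = A_1 \sqcup \cdots \sqcup A_{r-1}$ with $W = 0$ a.e.\ on each $A_i \times A_i$. Granting this, the restricted variational problem decouples: for any partition with class sizes $\alpha_i$, pointwise maximization of $\phi$ on each off-diagonal block yields $W = p$ there, reducing the objective to $\bigl(1 - \sum_i \alpha_i^2\bigr)\phi(p)$. Since $\phi(p) > 0$, Cauchy-Schwarz forces the equal class sizes $\alpha_i = 1/(r-1)$, and the unique maximizer in $\widetilde{\mathcal{W}}$ is $pf^{K_{r-1}}$, attaining $\tfrac{r-2}{r-1}\phi(p)$. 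Upper semicontinuity of $-I$ and continuity of $t(K_2, \cdot)$ then force $\tilde{f}_\infty$ to attain this supremum, so $\tilde{f}_\infty = pf^{K_{r-1}}$ in $\widetilde{\mathcal{W}}$, contradicting $\delta_\square(\tilde{f}_k, pf^{K_{r-1}}) \geq \epsilon$.

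The principal obstacle is the structural claim for general $H$. For $H = K_r$ it is standard; the reduction from arbitrary $H$ with $\chi(H) = r$ proceeds by verifying $t(H, W) = 0 \Rightarrow t(K_r, W) = 0$, which rests on the observation that any positive-measure $K_r$-block in $W$ can be used to embed a positive-measure copy of $H$ by composing a proper $r$-coloring of $V(H)$ with the block structure and applying a Lebesgue density argument. The boundary case $r = 2$, where $pf^{K_1} = 0$, is consistent with examples such as $H = C_4$: the identity $t(C_4, W) = \int \bigl(\int W(x,y)W(y,z)\,dy\bigr)^2 dx\,dz$ shows that $t(C_4, W) = 0$ already forces $W = 0$ a.e., matching the empty-graphon conclusion.
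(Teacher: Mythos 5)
The overall framework — compactness, forcing $t(H,\tilde f_\infty)=0$ via the lower bound obtained by testing the candidate graphon, then identifying the maximizer of the constrained variational problem — matches the paper's proof, which itself defers the final step to Theorem~3.2 of \cite{YRF}. Where you fill in that final step yourself, there is a genuine error.

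You assert as the ``crucial structural input'' that $t(H,W)=0$ with $\chi(H)=r$ forces $W$ to be $(r-1)$-colorable, and you claim this is ``standard'' for $H=K_r$. It is false, already for $r=3$: the step graphon $W_{C_5}$ associated with the $5$-cycle satisfies $t(K_3,W_{C_5})=0$ (the pentagon is triangle-free), yet it is not $2$-colorable, since a measurable bipartition of $[0,1]$ vanishing on both diagonal blocks would descend to a proper $2$-coloring of $C_5$, which has chromatic number $3$. So the constrained feasible set $\{W:t(K_r,W)=0\}$ is strictly larger than the set of $(r-1)$-colorable graphons, and your decoupling into block sizes $\alpha_1,\dots,\alpha_{r-1}$ followed by Cauchy--Schwarz does not cover it. The correct route is instead to argue that among \emph{all} $K_r$-free graphons $W$ the quantity $\beta_1 t(K_2,W)-I(W)$ is at most $\tfrac{r-2}{r-1}\phi(p)$, with equality only for $pf^{K_{r-1}}$. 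One way: replace $W$ by $p\,\mathbf 1_{\{W>0\}}$ (which preserves $K_r$-freeness and can only increase the objective, since $\phi(0)=0$ and $\phi(u)\le\phi(p)$), so that the objective becomes $\phi(p)\cdot\mu(\mathrm{supp}\,W)$; then invoke the graphon form of Tur\'an's theorem — equivalently, the $e>(r-2)/(r-1)$ range of the Reiher/Razborov lower bound already used elsewhere in this paper — which bounds $\mu(\mathrm{supp}\,W)\le (r-2)/(r-1)$ for $K_r$-free supports, with the Tur\'an graphon as unique extremizer. Your reduction $t(H,W)=0\Rightarrow t(K_r,W)=0$ via density and a proper $r$-coloring of $H$ is fine; it is the next step, from $K_r$-free to $(r-1)$-colorable, that must be replaced by the extremal (Tur\'an) argument rather than a structural colorability claim.
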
 

	\begin{proof}
		Fix $\gamma > 0$. Let $\beta_{2}^{(i)} \to -\infty$ be an arbitrary sequence. For each $\beta_{2}^{(i)}$, let $\tilde{f}_{i}$ be an element of $\tilde{F}^{\ast}(\beta_2^{(i)})$, the set of maximizers for the variational problem. Let $\tilde{f}^{\ast}$ be a limit point of $\tilde{f}_{i}$ that exists by the compactness of $\widetilde{\mathcal{W}}$. Suppose that $t(H,f^{\ast}) > 0$. Then $t(H,f^{\ast})^{\gamma} > 0$ also. Then by the continuity of $t(H,\cdot)^{\gamma}$ and the boundedness of $t(H_1,\cdot)$ and $I(\cdot)$ on $\widetilde{\mathcal{W}}$,
			\begin{equation*}
				\lim_{i \to \infty} \psi_{\infty}^{(\beta,\gamma)}(T_{\beta_2^{(i)}}) = -\infty.
			\end{equation*}
		This contradicts the fact that for all $i$, $\psi_{\infty}^{(\beta,\gamma)}(T_{\beta_2^{(i)}})$ is bounded below by $\beta_1 - 1$, found by testing $f^{K_{r-1}}$. Thus $t(H,f^{\ast}) = 0$ and the remainder of the proof follows similarly to that of Theorem 3.2 in \cite{YRF}.
	\end{proof}

The remainder of the paper concerns $\beta_2 \to -\infty$. In this case, the supremum in Equation \ref{limnorm} will be acheived along the lower boundary curve of the edge-triangle density region. Let
	\begin{equation} \label{gfunc}
		g(e) = ae + r(e) = ae + \sum_{k=1}^{\infty} r_{k}(e)^{\gamma} \1{I_{k}}(e).
	\end{equation}
As $\beta_{2} \to -\infty$ one must minimize the function $g$ in order to solve the variational problem for the limiting normalization constant. For $\gamma < 1$, $g$ is a connected curve of concave segments and so the minimum value of $g$ can only occur at the points $e_{k}$ for $k \in \N$ or at $1$. Theorem \ref{gamma5/9} deals with the case of $\gamma \le 5/9$. Note that in any of these cases of $\beta_2 \to -\infty$, if $a \ge 0$, then the limiting graphon will be the empty graphon. For this reason, we only treat $a < 0$ throughout the case of $\beta_2 \to -\infty$.

\begin{theorem} \label{gamma5/9}
	Consider the generalized edge-triangle exponential random graph model defined in Equation \ref{get}. Let $\beta_1 = a\beta_2 + b$. Then 
		\begin{equation} \label{varproblem}
			\lim_{\beta_2 \to -\infty} \sup_{\tilde{f} \in \widetilde{F}^{\ast}(\beta_2)} \left\{ \delta_{\square} \left( \tilde{f}, \widetilde{U} \right) \right\} = 0,
		\end{equation}
	where for $\gamma \le 5/9$, the set $U \subset \mathcal{W}$ is:
		\begin{itemize}
			\item $U = \left\{ f^{K_2} \right\}$ if $a > -2$ or $a = -2$ and $b < 0$,
			\item $U = \left\{ f^{K_2}, 1 \right\}$ if $a = -2$ and $b = 0$,
			\item $U = \left\{ 1 \right\}$ if $a < -2$ or $a = -2$ and $b > 0$.
		\end{itemize}
\end{theorem}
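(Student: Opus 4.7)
The plan is to extend the variational approach of \cite{YRF} to the generalized model. As $\beta_2 \to -\infty$, divide the objective in \eqref{limnorm} by $-\beta_2 > 0$; since $\beta_1/\beta_2 \to a$ and $I$ is bounded on $\widetilde{\mathcal{W}}$, the rescaled problem converges to
\begin{equation*}
\inf \left\{ ae + t^\gamma : (e,t) \in R \right\}.
\end{equation*}
For every fixed $e$, minimizing $t^\gamma$ is equivalent to minimizing $t$, so Razborov's bound \eqref{raz} forces the infimum onto the lower boundary, and the problem reduces to minimizing $g(e) = ae + r(e,\gamma)$ on $[0,1]$. By Lemma \ref{inflect} together with $\gamma \le 1$, each lower segment $r_k^\gamma$ is concave on $I_k$, so on every $I_k$ the function $g$ is the sum of a linear function and a concave function, hence concave, and its minimum over $I_k$ is attained at an endpoint. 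Consequently, the global minimum of $g$ on $[0,1]$ lies in $\{e_k : k \ge 0\} \cup \{1\}$.

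Next I would analyze the sequence $(g(e_k))_{k \ge 1}$ via the telescoping identity $g(e_k) - g(e_{k-1}) = (a + s_k)(e_k - e_{k-1})$. Lemma \ref{sk} gives, for $\gamma \le 5/9$, that $\{s_k\}_{k \ge 2}$ is strictly decreasing with $s_k \downarrow 3\gamma$, so the sign of $a+s_k$ switches from positive to negative at most once as $k$ increases; the sequence $(g(e_k))_{k\ge 1}$ is therefore unimodal, first nondecreasing and then nonincreasing, with endpoints $g(e_1) = a/2$ and $\lim_{k \to \infty} g(e_k) = g(1) = a+1$. The assumption $a<0$ rules out $g(e_0) = 0$. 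Comparing $a/2$ with $a+1$ yields the trichotomy: for $a > -2$, $g(e_1) < g(1)$, and unimodality forces $g(e_1)$ to be the unique minimum of $g$, corresponding to the bipartite Tur\'an graphon $f^{K_2}$; for $a<-2$, $g(1) < g(e_1)$, and the complete graphon is the unique minimizer.

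The critical case $a=-2$ is the main obstacle, since the two extremes tie at $-1$, and one must rule out any intermediate corner $g(e_k)$ with $k \ge 2$ achieving the same value. This reduces to showing $t_k^\gamma > 2e_k - 1$ for every $k \ge 2$, which after the substitution $u = (k-1)/(k+1) \in (0,1)$ becomes $\phi(u) := u^{\gamma-1}\bigl(\tfrac{u+1}{2}\bigr)^\gamma > 1$. A direct computation gives
\begin{equation*}
(\log \phi)'(u) = \frac{(2\gamma - 1)u + (\gamma - 1)}{u(u+1)},
\end{equation*}
whose numerator is strictly negative on $[0,1]$ for $\gamma < 2/3$ (in particular for $\gamma \le 5/9$); hence $\phi$ is strictly decreasing with $\phi(1) = 1$, yielding the required strict inequality on $(0,1)$. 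Thus the set of minimizers is exactly $\{f^{K_2}, 1\}$, and because both are $\{0,1\}$-valued we have $I(f^{K_2}) = I(1) = 0$; the tie is then broken at next order in $\beta_2$ by the remaining term $be - I(f)$, which equals $b/2$ at $f^{K_2}$ and $b$ at the complete graphon, so $b>0$ selects the complete graphon, $b<0$ selects $f^{K_2}$, and $b=0$ retains both. Finally, the cut-distance convergence in \eqref{varproblem} follows from a standard compactness argument: by compactness of $(\widetilde{\mathcal{W}}, \delta_\square)$ and continuity of the homomorphism densities, any subsequential limit of maximizers of \eqref{limnorm} must minimize $ae + t^\gamma$ over $R$ (with the appropriate $b$-refinement in the tied case) and hence lie in $\widetilde{U}$.
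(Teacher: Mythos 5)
Your proof is correct and follows the same route as the paper: reduce to minimizing $g(e) = ae + r(e,\gamma)$ over the lower boundary, use concavity of each Razborov segment (valid since $\gamma \le 1$) to localize the minimum to the corners $e_k$ and $1$, then exploit the strict decrease of $\{s_k\}$ from Lemma~\ref{sk} to make $(g(e_k))_{k\ge 1}$ unimodal and compare the endpoint values $g(e_1) = a/2$ and $g(1) = a+1$. Your explicit verification via $\phi(u)$ that no interior corner can tie with the endpoints when $a=-2$ is a welcome supplementary check, though it already follows from strict unimodality together with the observation (also used in the paper) that $s_2 > 2$ for $\gamma \le 5/9$ forces $g(e_2) > g(e_1) = -1$ while the strictly decreasing tail converges to $g(1)=-1$ without attaining it.
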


	\begin{proof}
		Let $\beta_1 = a\beta_2 + b$ and $0 < \gamma \le \frac{5}{9}$. Firstly,
			\begin{align}
				\psi_{\infty}^{(\beta,\gamma)}(T_{\beta}) &= \sup_{\tilde{f} \in \widetilde{W}} \left\{ \beta_1 t(K_2,\tilde{f}) + \beta_2 t(K_3,\tilde{f})^{\gamma} - I(\tilde{f}) \right\} \notag \\
				&= \sup_{\tilde{f} \in \widetilde{W}} \left\{ \beta_2 \left( ae + t^{\gamma} \right) + be - I(\tilde{f}) \right\} \notag \\
				&= \beta_2 \inf_{\tilde{f} \in \widetilde{W}} \left\{ ae + t^{\gamma} + \beta_2^{-1}\left( be - I(\tilde{f})\right) \right\},
			\end{align}
		where $e = t(K_2,\tilde{f})$ and $t = t(K_3,\tilde{f})$. This preceding minimization problem must now be solved. As $\beta_2 \to -\infty$, $(be - I(\tilde{f}))$ is bounded and so $ae + t^{\gamma}$ must be minimized. Minimizing this expression occurs along the lower boundary curves of the region of realizable densities. By Lemma \ref{sk} the sequence $\{ s_{k} \}$ is strictly decreasing. Suppose that $-s_{n} \le a < -s_{n+1}$ for some $n$. So for all $k < n$, $a > -s_{k}$, and
			\begin{equation*}
				a > -s_{k} \implies a > \frac{t_{k-1}^{\gamma} - t_{k}^{\gamma}}{e_{k} - e_{k-1}} \implies a e_{k} + t_{k}^{\gamma} > a e_{k-1} + t_{k-1}^{\gamma}.
			\end{equation*}
		Thus, for all $k < n$, $a > -s_{k}$ implies that $g(e_{k-1}) < g(e_{k})$. For all $j > n$, $a < -s_{j}$ similarly implies that $g(e_{j-1}) > g(e_{j})$. Lastly, if $a = -s_{n}$, then $g(e_{n-1}) = g(e_{n})$. Now let $a < -s_{2}$. Then $a < -s_{k}$ for all $k \ge 2$. This means that $g(e_{k-1}) > g(e_{k})$ and so the minimum occurs at $1$. Suppose now that $a \ge -3\gamma$. Then $a > -s_{k}$ for all $k \ge 2$, in turn implying that $g(e_{k-1}) < g(e_{k})$ and the minimum occurs at $e_{1}$. Now the values of $g(e_1)$ and $g(1)$ must be compared where $g(e_1) = a/2$ and $g(1) = a+1$. If $a < -2$, then the minimizing value occurs at $1$. If $a > -2$, it occurs at $1/2$. Not that for all $0 < \gamma \le 5/9$, $-s_{2} < -2$. Therefore, even if $a = -s_{2}$, the minimum must occur at $1$. Comparing these values based on the value of $b$ for $a = -2$ completes the proof. 
	\end{proof}

The case of $\beta_2 \to -\infty$ along straight lines for $5/9 < \gamma \le \log_{\frac{27}{16}}(3/2)$ is an excruciating case analysis with limits being multipartite structures depending on the parameters $a, b,$ and $\gamma$. Since the segments that define $g$ are all concave down in this range of $\gamma$ values, the minimizing values for $g$ are a subset of the points $e_{k}$ and $1$. Note that the empty graph cannot be a minimizing value since for $e \in I_{1}$, $g(e) = ae$, with $a < 0$. Thus whatever the minimum value of $g$ may be, it must be less than or equal to $a/2$. 

The classfication of the model in this region of the $\gamma$ parameter space is quite technical and relies on how the values of the sequence $\{s_{k}\}$ relate to one another, which changes as a function of $\gamma$. The classification is split into $3$ technical Lemmas, which together fully characterize the model behavior in this region. Brief justification is provided as to why the result is broken into three cases. There is a critical value of $\gamma$, denoted $\gamma^{\ast}$ such that
	\begin{equation}
		\gamma^{\ast} = \frac{W_0 \left(2\ln\left(9/2\right)\right)}{\ln\left(9/2\right)},
	\end{equation}
and $s_2 = 3\gamma$ when $\gamma = \gamma^{\ast}$, where $W_0$ is the principal branch of the Lambert $W$ function. (Distinction is made between the branches $W_0$ and $W_{-1}$ of the Lambert $W$ function here because both branches will be required in Section \ref{>1}. More information about the Lambert $W$ function is provided in Section \ref{lambertw}.) For $\gamma < \gamma^{\ast}$, $3\gamma < s_{2}$ and for $\gamma < \gamma^{\ast}$, $3\gamma > s_{2}$. This produces three seperate cases for $\gamma$ that naturally seperate the potential behavior of the sequence $\{s_{k}\}$.

In the case of $\gamma \le 5/9$, the maximizing graphon was either bipartite or complete. In broad strokes, for $5/9 < \gamma \le \log_{\frac{27}{16}}(3/2)$, the maximizing graphon will be a Tur\'an graphon. Unlike for $\gamma \le 5/9$, it is possible, dependent on the parameters $\gamma$, $a$, and $b$, to realize Tur\'an graphons with any number of classes as a solution to the variational problem. Furthermore, given small, smooth, changes in the values of $a$ and $b$, there are sudden jumps in the behavior from the Tur\'an graphon on $2$ classes to Tur\'an graphons on $n$ classes for much larger values of $n$. The statement and proof of these results is left to the Appendix.

The region $\log_{\frac{27}{16}}(3/2) < \gamma \le 1$ is now analyzed. Here the sequence $\{s_{k}\}$ is strictly increasing and the Razborov curve segments are concave down leading to Tur\'an graphons in the limit.

\begin{theorem} \label{log-1}
	Consider the generalized edge-triangle exponential random graph model defined in Equation \ref{get}. Let $\beta_1 = a\beta_2 + b$. Then 
		\begin{equation}
			\lim_{\beta_2 \to -\infty} \sup_{\tilde{f} \in \widetilde{F}^{\ast}(\beta_2)} \left\{ \delta_{\square} \left( \tilde{f}, \widetilde{U} \right) \right\} = 0,
		\end{equation}
	where for $\log_{\frac{27}{16}}(3/2) < \gamma \le 1$, the set $U \subset \mathcal{W}$ is:
		\begin{itemize}
			\item $U = \left\{ f^{K_{n}} \right\}$ if $a = -s_{n}$ and $b < 0$,
			\item $U = \left\{ f^{K_{n}}, f^{K_{n+1}} \right\}$ if $a =-s_{n}$ and $b = 0$,
			\item $U = \left\{ f^{K_{n+1}} \right\}$ if $-s_{n} > a > -s_{n+1}$ or $a = -s_{n}$ and $b > 0$.
		\end{itemize}
\end{theorem}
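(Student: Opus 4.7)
The plan is to follow the strategy of Theorem \ref{gamma5/9} and reduce the variational problem for $\psi_\infty^{(\beta,\gamma)}$ to a one-dimensional minimization along the Razborov lower boundary. Substituting $\beta_1 = a\beta_2 + b$ into Equation \ref{limnorm} and factoring out $\beta_2 < 0$ gives
\begin{equation*}
\psi_\infty^{(\beta,\gamma)}(T_\beta) = \beta_2 \inf_{\tilde f \in \widetilde{\mathcal{W}}} \left\{ ae + t^\gamma + \beta_2^{-1}(be - I(\tilde f)) \right\},
\end{equation*}
with $e = t(K_2, \tilde f)$ and $t = t(K_3, \tilde f)$. Since $be$ and $I$ are bounded on $\widetilde{\mathcal{W}}$, the subleading correction vanishes uniformly as $\beta_2 \to -\infty$, so at leading order one must minimize $ae + t^\gamma$. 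For fixed $e$, this is minimized by taking $t$ as small as Razborov's bound permits, namely $t = r(e)$, reducing the problem to minimizing $g(e) = ae + r(e,\gamma)$ on $[0,1]$.

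In the regime $\log_{\frac{27}{16}}(3/2) < \gamma \le 1$, Lemma \ref{inflect} ensures each segment $r_k^\gamma$ is strictly concave down on $I_k$, so $g|_{I_k}$ is strictly concave and attains its minimum at an endpoint. Hence the minimum of $g$ on $[0,1]$ lies at some Turán corner $e_n$. From $g(e_k) - g(e_{k-1}) = (e_k - e_{k-1})(a + s_k)$ and the strict monotonicity $s_2 < s_3 < \cdots$ with limit $3\gamma$ given by Lemma \ref{sk}, the sign of $a + s_k$ flips exactly once in $k$. Under $-s_n > a > -s_{n+1}$, the sequence $\{g(e_k)\}$ strictly decreases for $k \le n$ and strictly increases for $k \ge n+1$, so the unique minimizing corner is $e_n$, corresponding to the Turán graphon $f^{K_{n+1}}$. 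Since $a > -3\gamma$, the increasing tail of corner values has limit $g(1) = a+1 > g(e_n)$, so $e = 1$ is also excluded.

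For the boundary case $a = -s_n$, strict concavity of $r_n^\gamma$ forces $g(e_{n-1}) = g(e_n) < g(e)$ on the interior of $I_n$, so only $f^{K_n}$ and $f^{K_{n+1}}$ are leading-order minimizers. The tie is broken by the subleading term $be - I(\tilde f)$, which must be maximized in the supremum of Equation \ref{limnorm}. Since Turán graphons satisfy $I(\tilde f) = 0$, this reduces to maximizing $be$: $f^{K_{n+1}}$ wins when $b > 0$, $f^{K_n}$ wins when $b < 0$, and both are minimizers when $b = 0$.

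The main technical obstacle is establishing rigidity at the corner points: that $f^{K_{n+1}}$ is the \emph{only} element of $\widetilde{\mathcal{W}}$ (up to measure-preserving bijection) realizing the pair $(e_n, t_n)$ on the Razborov curve. This uniqueness follows from Turán's theorem together with the stability underlying Razborov's extremal characterization at the interpolating vertices. Once it is in hand, a standard compactness argument on $(\widetilde{\mathcal{W}}, \delta_\square)$—extracting from any family $\tilde f_i \in \widetilde{F}^{\ast}(\beta_2^{(i)})$ with $\beta_2^{(i)} \to -\infty$ a $\delta_\square$-convergent subsequence and using upper semicontinuity of $T_\beta - I$ together with continuity of the homomorphism densities—forces every limit point into $\widetilde U$, yielding the claimed $\delta_\square$-convergence.
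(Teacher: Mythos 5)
Your proposal is correct and is exactly the argument the paper has in mind: reduce the variational problem to minimizing $g(e)=ae+r(e,\gamma)$ along the Razborov lower boundary, use Lemma \ref{inflect} ($\gamma\le1$ gives concave segments, so minimizers sit at corners $e_k$) together with Lemma \ref{sk} ($\gamma>\log_{27/16}(3/2)$ gives $s_k$ strictly increasing, so the sign of $a+s_k$ flips exactly once), and break the tie at $a=-s_n$ via the subleading $be-I$ term, which reduces to comparing $be_{n-1}$ and $be_n$ since $I$ vanishes on Tur\'an graphons. The paper itself omits the proof with a pointer to Theorem 3.3 of \cite{YRF}, and your write-up supplies precisely the one-dimensional corner analysis, the endpoint comparison showing $g(1)>g(e_n)$, the uniqueness of graphons at Tur\'an corners, and the compactness/semicontinuity step that that reference uses.
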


\begin{proof}
	The proof is similar to the proof of Theorem 3.3 in \cite{YRF} and is omitted.
\end{proof}

\section{Generalized Model \texorpdfstring{$\gamma > 1$}{g>1}} \label{>1}

Recall $g$ as defined in Equation \ref{gfunc}. Note that $g$ is differentiable on the set $[0,1) \setminus \{e_{k} : k \in \N\}$ and semi-differentiable at the points $e_{k}$. Since $g$ is difficult to work with, many of the results of this section are derived by relating properties of $g$ to properties of $l$ from Equation \ref{GoodmanFunc}. Using the properties of $l$, it is determined on which segment the minimizers of $g$ must lie. Once the segment is determined, a change of variables allows one to translate the problem of finding the minimizers of $g$ to finding the root of a certain polynomial on the interval $(e_{k-1},e_{k})$, given that the minimum lies on the $k$th segment. 

Properties of $l$ must first be related to the behavior of $g$. For $k \in \N$ and $e$ in the interior of the intervals $I_{k}$, 
	\begin{equation}
		g^{\prime}(e) = a + \sum_{k=1}^{\infty} \frac{3(k-1)\gamma}{k(k+1)} \left( r_{k}(e) \right)^{\gamma - 1} \left( k + \sqrt{k(k-e(k+1))} \right) \1{I_{k}}(e).
	\end{equation} 
Other important quantites for $\gamma > 1$ are the values of the left and right derivatives of $g$ at the points $e_{k}$ where
	\begin{equation}
		\partial_{-} g \left( e_{k} \right) = a + \frac{3(k-1)\gamma}{k+1} t_{k}^{\gamma - 1},
	\end{equation}
and
	\begin{equation}
		\partial_{+} g \left( e_{k} \right) = a + \frac{3k\gamma}{k+1} t_{k}^{\gamma - 1}.
	\end{equation}
These derivatives and the properties of the Goodman bound will yield results about the extremal behavior of the generalized model for $\gamma > 1$. Lemma \ref{inflect} showed that the segments of $g$ sequentially display inflection points as $\gamma$ increases from $1$. The variational problem amounts to finding the minimum of $g$ on its domain. Since $g$ is continuous on a closed interval, the minimum of $g$ is attained on its domain. It will be helpful to understand how the concavity changes of $g$ affect where the minimum can occur. To this end, the Goodman bound is employed to help restrict the search. Since $r_{k}(e)$ is increasing and concave for all $k$ and $e$ in their respective domains and $j(e) = (2e^2 - e)$ is convex, it must be true that $r_{k}(e) \ge j(e)$ for all $k \in \N$ and $e \in \left[e_{k-1},e_{k}\right]$ with equality only when $e = e_{k-1}$ or $e = e_{k}$. Thus $r_{k}(e)^{\gamma} \ge l(e)$. Furthermore, this implies that $r(e)^{\gamma} \ge l(e)$ for all $e \in \left[0,1\right]$. Recall that if $f$ is strictly on convex on $[a,b]$ and $a \le x < y < z \le b$, then
	\begin{equation} \label{convex}
		\frac{f(z) - f(x)}{z - x} < \frac{f(z) - f(y)}{z - y}.
	\end{equation}
Consider the slope of the secant line through the points $e_k$ and $e$ on the function $l$ where $e \le e_{k-1}$. Since $l$ is strictly convex for $\gamma \ge 1$, the slope of the secant line increases as $e$ increases to $e_{k-1}$ and is bounded above by $s_{k}$. Furthermore, $r(e)^{\gamma}$ lies above $l$, so the slope of the secant line on the graph of $r^{\gamma}$ between the points $e_k$ and $e \le e_{k-1}$ is positive and bounded above by $s_{k}$. Thus for all $e \le e_{k-1}$
 	\begin{equation*}
 		\frac{t_{k}^{\gamma} - r(e)^{\gamma}}{e_{k} - e} \le \frac{t_{k}^{\gamma} - l(e)^{\gamma}}{e_{k} - e} \le \frac{t_{k}^{\gamma} - t_{k-1}^{\gamma}}{e_{k} - e_{k-1}} = s_{k} < -a.
 	\end{equation*}
So for $-s_{k} > a$ and $e \le e_{k-1}$, it holds that $g(e_k) < g(e)$. Similar reasoning can be applied for $a \ge -s_{k+1}$ and $e \ge e_{k+1}$ to show that $g(e) \ge g(e_{k})$, where equality holds only when $a = -s_{k+1}$ and $e = e_{k+1}$.
This leads to the conclusion that for $\gamma > 1$ and $-s_{k} > a \ge -s_{k+1}$, the minimizing value for $g$, $e^{\ast}$, is contained in the interval $(e_{k-1},e_{k+1}]$. This information limits the search for the minimizing value of $g$ when $\gamma > 1$. The variational problem is first solved for $-a = s_{k}$ and $k \ge 2$, then later the case where $-s_{k} > a > -s_{k+1}$. The case for $\gamma > 1$ and $a = -s_{2}$ is studied first because of its relative simplicity with respect to $a = -s_{k}$ for $k \ge 3$.

\begin{figure}[t!]
	\centering
	\frame{\includegraphics[keepaspectratio, width=5cm]{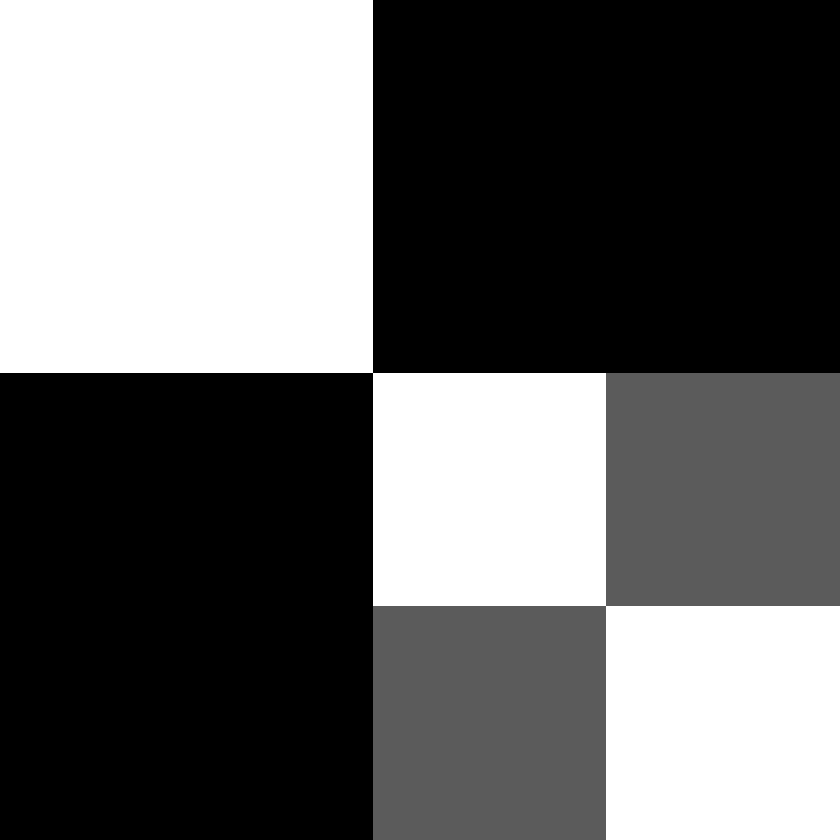}}
	\caption{Representation of the limiting graphon, $f$, for $\beta_2 \to -\infty$ and $a = -s_{2}$ in Proposition \ref{s2}. The black squares have value $1$, white squares value $0$, and gray is some $0 < p < 1$ chosen such that the $t(K_2,f) = e^{\ast}$. Note that $p$ is close to $0$ for $e^{\ast}$ near $1/2$ and $p$ is close to $1$ for $e^{\ast}$ near $2/3$. For a precise definition of $f$, see \cite{RS}.}
	\label{picLS}
\end{figure}

\begin{prop} \label{s2}
	If $a = -s_2$ and $\gamma > 1$, then there exists a unique $e^{\ast}$ with $1/2 < e^{\ast} < i_2$ such that $g$ has a global minimum at $e^{\ast}$.
\end{prop}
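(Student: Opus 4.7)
The choice $a=-s_2$ is designed precisely to equalize $g$ at the two Tur\'an endpoints of $I_2=[1/2,2/3]$: since $r_1\equiv 0$, we have $g(1/2)=a/2$ and $g(2/3)=2a/3+t_2^\gamma$, and the identity $s_2=6\,t_2^\gamma$ gives $g(1/2)=g(2/3)=-3\,t_2^\gamma$. The plan is to show separately that (i) $g(e)\geq g(1/2)=g(2/3)$ for every $e\in[0,1]\setminus(1/2,2/3)$, and (ii) inside the open interval $(1/2,2/3)$ there is a unique critical point $e^{\ast}$, which lies in $(1/2,i_2)$ and satisfies $g(e^{\ast})<g(1/2)$.

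For (i), on $[0,1/2]$ the function $g(e)=ae$ is strictly decreasing since $a<0$, so $g(e)\geq g(1/2)$. On $[2/3,1]$ I invoke the Goodman bound $r(e)^\gamma\geq l(e)$ with $l(e)=(e(2e-1))^\gamma$; since $\gamma>1$, $l$ is strictly convex on $[0,1]$. By the three-chord inequality for convex functions, the slope of $l$ from $(e_2,t_2^\gamma)$ to $(e,l(e))$ is at least the slope $s_2$ of the secant from $(e_1,0)$ to $(e_2,t_2^\gamma)$ for every $e\geq e_2$. Hence $r(e)^\gamma-t_2^\gamma\geq l(e)-l(e_2)\geq s_2(e-e_2)$, which yields $g(e)-g(2/3)\geq (a+s_2)(e-e_2)=0$.

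For (ii), since $\gamma>1=(4+2)/6$, Lemma \ref{inflect} with $k=2$ applies and $g$ is strictly convex on $(1/2,i_2)$ and strictly concave on $(i_2,2/3)$. Plugging $t_1=0$ into the formula for $\partial_{+}g(e_1)$ gives $g'(1/2^{+})=a=-s_2<0$. I claim $g'(i_2)>0$: otherwise $g'$, being strictly increasing on $(1/2,i_2)$ up to a non-positive value and then strictly decreasing on $(i_2,2/3)$ from a non-positive value, would remain non-positive on all of $[1/2,2/3]$, forcing $g$ to be strictly decreasing and contradicting $g(1/2)=g(2/3)$. Since $g'$ is strictly increasing on $(1/2,i_2)$ from a negative value to a positive value, the intermediate value theorem produces a unique zero $e^{\ast}\in(1/2,i_2)$, which is a strict local minimum by convexity. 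Strict concavity of $g$ on $[i_2,2/3]$ rules out any interior local minimizer there, so the minimum of $g$ over $[i_2,2/3]$ is attained at an endpoint and strictly exceeds $g(e^{\ast})$. Combined with (i), $e^{\ast}$ is the unique global minimizer of $g$. The main delicacy is pinning down the sign of $g'(i_2)$, which I handle indirectly via the boundary equality $g(1/2)=g(2/3)$ rather than by computing at $i_2$ explicitly.
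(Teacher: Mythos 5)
Your proof is correct and follows the paper's overall strategy — restrict the minimum of $g$ to $I_2$ via the Goodman bound and convexity of $l$, then use the single concavity change at $i_2$ from Lemma \ref{inflect} (with $k=2$) to produce a unique interior critical point — but you make explicit the step the paper compresses into the phrase ``the concavity properties of $g$ ensure the uniqueness of $e^\ast$.'' In particular, you record the boundary equality $g(1/2)=g(2/3)=-3t_2^\gamma$ and use it to get $g'(i_2)>0$ indirectly: were $g'$ nonpositive on all of $I_2$, $g$ would strictly decrease across $I_2$, contradicting that equality. This is the cleanest way to dispatch the case $1<\gamma<4/3$, where $\partial_- g(e_2)=(2/9)^{\gamma-1}(\gamma-4/3)<0$, so the paper's displayed computation $\partial_+ g(e_2)=(9\gamma-6)(2/9)^\gamma>0$ (which controls only the slope entering $I_3$) does not by itself force $g'(i_2)>0$. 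The paper's pre-Proposition convexity argument does yield $g(e)\ge g(e_1)$ for $e\ge e_2$, from which the same contradiction would follow, so the gap is one of exposition rather than substance; your write-up supplies the missing sentence. You also bound $g$ on $[2/3,1]$ with a single chord-slope comparison against $l$, a mild streamlining of the paper's separate use of $\partial_+ g(e_{k-1})\ge 0$ and the monotonicity of $\{g(e_k)\}$. One small wording nit: $l(e)=(e(2e-1))^\gamma$ is not real-valued for $e\in(0,1/2)$ and non-integer $\gamma$, so ``strictly convex on $[0,1]$'' should read ``on $[1/2,1]$''; this is the only interval your chord argument actually uses, so nothing breaks.
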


\begin{proof}
	Let $a = -s_2$. By the above, the minimum of $g$ is some $e^{\ast} \in (e_1,e_3]$. Since $\{s_{k}\}$ is increasing, $a > -s_{k}$ for all $k > 2$. Therefore $g(e_{k}) > g(e_{k-1})$ for all $k > 2$. Recall
		\begin{equation*}
			\partial_{+} g (e_{k-1}) = a + \frac{3(k-1)\gamma}{k} t_{k-1}^{\gamma - 1}.
		\end{equation*}
	It suffices to show that $\partial_{+} g(e_{k-1}) \ge 0$ for $k \ge 2$ and that $\partial_{+} g(e_{1}) < 0$. Note that $\partial_{+} g(e_{k-1})$ is an increasing function of $k$ and
		\begin{equation*}
			\partial_{+} g (e_{2}) = (9\gamma - 6)\left( \frac{2}{9} \right)^{\gamma} > 0.
		\end{equation*}
	Thus $\partial_{+} g (e_{k-1}) \ge 0$ for $k \ge 2$, and $\partial_{+}g (e_1) = -s_2 < 0$. The concavity properites of $g$ ensure the uniqueness of $e^{\ast}$.
\end{proof} 
This Proposition implies that for $\beta_1 = a\beta_2 + b$, $\gamma > 1$, and $a = -s_2$,
	\begin{equation*}
		\lim_{\beta_2 \to -\infty} \sup_{\tilde{f} \in \widetilde{\mathcal{W}}} \left\{ \delta_{\square} (\tilde{f}, \tilde{U}) \right\} \to 0
	\end{equation*}
where, for $\gamma > 1$ and $a = -s_{2}$, the set $U = \{f\}$ with $t(K_2,f) = e^{\ast}$ and $t(K_3,f) = r_2(e^{\ast})$ for $1/2 < e^{\ast} < i_2$. The edge density is obtained through the variational problem; and, since $\beta_2 \to -\infty$, the triangle density lies on the Razborov curve. Thus the limiting set of graphons is the set of all graphons with edge density $e^{\ast}$ and triangle density $r_2(e^{\ast})$, found by substituting the edge density into the Razborov curve segment. Radin and Sadun determined a precise formulation of the graphons that lie on the Razborov curve. Figure \ref{picLS} illustrates an example of what this graphon looks like. For a precise definition of this graphon construction, see Theorem 4.2 in Section 4 of \cite{RS}.

With this first case established, consider now the situation that arises when $a = -s_{n}$ for $n > 2$. The simplicity of this first case stemmed from the fact that $t_1 = 0$. Since $t_{n-1} > 0$ for $n > 2$, the subsequent cases present a greater challenge for finding the minimizing value of $g$. The Lambert $W$ function will be required and some of its basic properties are discussed in the next section. This function is then used to procure a sequence of critical $\gamma$ values that determine whether the minimum of $g$ occurs at one of the endpoints $e_n$ or on the interior of the interval $I_{n}$.

\subsection{Lambert W Function} \label{lambertw}

Let $f(z) = ze^z$ for $z \in \mathbb{C}$. The \textit{Lambert W function} is a set of functions, namely the branches of the inverse relation to the function $f(z)$. These branches satisfy the functional equation
	\begin{equation}
		W_i (z) e^{W_i (z)} = z
	\end{equation}
for all $i \in \mathbb{Z}$. This set contains two real valued branches, $W_{-1}$ and $W_0$, such that
	\begin{equation}
		W_{-1} : [-e^{-1},0) \to (-\infty,-1] \hspace{.5cm}\text{and}\hspace{.5cm} W_{0} : [-e^{-1},\infty) \to [-1,\infty).
	\end{equation}
The branch $W_{-1}$ is decreasing on its domain from $-1$ to $-\infty$, and the branch $W_0$ is increasing on its domain from $-1$ to $\infty$. For more information on the Lambert $W$ function, see \cite{CGHJK}.

The central piece to the proof of Proposition \ref{s2} was the fact that for all $\gamma > 1$, $\partial_{+} g(e_1) < 0$. This is true because
	\begin{equation*}
		\partial_{+} g(e_1) = -s_{2} + \frac{3\gamma}{2} t_{1}^{\gamma - 1} = -s_{2}.
	\end{equation*}
For arbitrary $n \ge 3$, this is indeed not nearly as simple. If $a = -s_{n}$, it is required that
	\begin{equation} \label{partialright}
		\partial_{+} g(e_{n-1}) = -s_{n} + \frac{3(n-1)\gamma}{n} t_{n-1}^{\gamma - 1} < 0.
	\end{equation}
Note that $t_{n-1} > 0$ for $n \ge 3$. If Inequality \ref{partialright} holds, then there is a local minimum of $g$ at some $e^{\ast} \in (e_{n-1},i_{n})$ and, if the inequality does not hold, the minimum of $g$ is at one, or both, of the endpoints $e_{n-1}$ and $e_{n}$. Utilizing the defintion of $s_n$ and $t_n$, the inequality
	\begin{equation} \label{W0}
		\frac{3(n-1)\gamma}{n} t_{n-1}^{\gamma - 1} < s_{n}
	\end{equation}
is equivalent to
	\begin{equation} \label{W1}
		1 + \frac{3\gamma}{(n+1)(n-2)} < \left( \frac{n^3}{(n+1)^2 (n-2)} \right)^{\gamma}.
	\end{equation}
The Inequality \ref{W1} is of the form
	\begin{equation*}
		1 + a(n)\gamma < (p(n))^{\gamma}
	\end{equation*}
where
	\begin{equation}
		a(n) = \frac{3}{(n+1)(n-2)} \hspace{0.25cm}\text{and}\hspace{0.25cm} p(n) = \frac{n^3}{(n+1)^2 (n-2)}.
	\end{equation}
Let $a = a(n)$ and $p = p(n)$. The corresponding Equality to Inequality \ref{W1} must be solved for $\gamma$. This reveals a range of $\gamma$ values such that for $n \ge 3$, the desired inequality holds and a local minimum of $g$ is guaranteed on the interior of $I_{n}$. Using a simple substitution of variables, solutions to equations of the form
	\begin{equation} \label{W2}
		1 + a\gamma_{n} = p^{\gamma_{n}}
	\end{equation}
can be determined using the Lambert W function, where $\gamma_{n}$ is the solution to Equation \ref{W2} for a fixed $n \in \N$. Note that for $f(x) = x e^{x}$, $f$ is not injective on $(-\infty,0)$ and the image of $f$ on this interval is $[-e^{-1},0)$. Thus for $x < 0$ and $x \neq - 1$, the equation $x e^x = y e^y$ has two solutions in $y$. The first is apparent upon inspection as $y = x$, while the other is given by $y = W_{-1}(xe^x)$ for $x \in (-1,0)$. Letting $-t = \gamma_{n} + 1/a$, Equation \ref{W2} becomes
	\begin{equation} \label{W3}
		t\ln{p} e^{t\ln{p}} = -\frac{1}{a}\ln{p}e^{-\frac{1}{a}\ln{p}}.
	\end{equation}
One solution of this equation is $t = -1/a$. This turns out to be the solution along the branch $W_{0}$ corresponding to $\gamma = 0$. This can also be determined directly from Equation \ref{W2}. There is another solution to this equation along the branch $W_{-1}$. Let
	\begin{equation}
		q(n) := -\frac{1}{a(n)}\ln{p(n)} = -\frac{(n+1)(n-2)}{3} \ln\left( \frac{n^3}{(n+1)^2 (n-2)} \right).
	\end{equation}
The function $q(n)$ is a negative and decreasing for $n \ge 3$. Taking the limit as $n \to \infty$, $q(n) \to -1$. Since $q(n) \in (-1,0)$, $q(n) e^{q(n)} \in (-e^{-1},0)$ for all $n \ge 3$. This implies that there is a second solution to the Equation \ref{W2} along the branch $W_{-1}$. Applying $W_{-1}$ to both sides of Equation \ref{W3}, one obtains the nonzero solution
	\begin{equation} \label{gamman}
		\gamma_{n} = -\left( \frac{1}{\ln{p(n)}} W_{-1}\left( q(n) e^{q(n)} \right) + \frac{1}{a(n)} \right).
	\end{equation}
Since $W_{-1}$ is monotone decreasing on the interval $(-e^{-1},0)$, if $\gamma > \gamma_{n}$, then the Inequality \ref{W0} is satisfied. Unfortunately, the preceding equation is not insightful as to how fast $\gamma$ must grow as a function of $n$. To this end, a recent result of Chatzigeorgiou is employed. In 2013, Chatzigeorgiou determined upper and lower bounds on the Lambert function $W_{-1}$ \cite{C}.

\begin{theorem}[Theorem 1 in \cite{C}] \label{LambertBound}
	The Lambert function $W_{-1}(-e^{-u-1})$ for $u > 0$ is bounded as
		\begin{equation}
			-1 - \sqrt{2u} - u < W_{-1}(-e^{-u-1}) < -1 - \sqrt{2u} - \frac{2}{3} u.
		\end{equation}
\end{theorem}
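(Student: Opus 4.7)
The plan is to recast both inequalities as single-variable elementary inequalities via a substitution in the defining Lambert equation. Writing $v = W_{-1}(-e^{-u-1})$, the defining identity $v e^v = -e^{-u-1}$ together with $v < -1$ on the $W_{-1}$ branch suggests setting $v = -1 - w$ with $w > 0$. The identity then simplifies to $(1+w) e^{-w} = e^{-u}$, which rearranges to the clean relation
\[
u \;=\; w - \ln(1+w).
\]
In these variables, the target bounds become $w < \sqrt{2u} + u$ (equivalent to the stated lower bound on $W_{-1}$) and $w > \sqrt{2u} + \tfrac{2u}{3}$ (equivalent to the stated upper bound on $W_{-1}$). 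The proof then reduces to verifying these two inequalities in $w$.

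For the lower bound on $W_{-1}$, observe that $w - u = \ln(1+w)$, so $w < \sqrt{2u} + u$ is equivalent to $\ln(1+w) < \sqrt{2(w - \ln(1+w))}$. Both sides are nonnegative for $w > 0$, so squaring gives the equivalent form $g(w) := \ln^2(1+w) + 2\ln(1+w) - 2w < 0$. A short computation gives $g(0) = 0$ and $g'(w) = \frac{2(\ln(1+w) - w)}{1+w}$, which is negative for $w > 0$ since $\ln(1+w) < w$. This closes this half.

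For the upper bound on $W_{-1}$, using $w - \tfrac{2u}{3} = \tfrac{1}{3}(w + 2\ln(1+w))$, the inequality $w > \sqrt{2u} + \tfrac{2u}{3}$ rearranges to $\tfrac{1}{3}(w + 2\ln(1+w)) > \sqrt{2(w - \ln(1+w))}$. The left side is positive for $w > 0$, so squaring yields the equivalent
\[
h(w) \;:=\; (w + 2\ln(1+w))^2 - 18\bigl(w - \ln(1+w)\bigr) \;>\; 0.
\]
Differentiating and simplifying, $h'(w)$ has the same sign as
\[
k(w) \;:=\; w^2 - 6w + (6 + 2w)\ln(1+w),
\]
and the key step is to verify the chain $k(0) = 0$, $k'(0) = 0$, together with $k''(w) = \frac{2w(3+w)}{(1+w)^2} > 0$. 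Strict convexity with a double zero at the origin forces $k(w) > 0$, hence $h'(w) > 0$; combined with $h(0) = 0$, this gives $h(w) > 0$ for $w > 0$, as desired.

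The main obstacle is the upper bound, since a single differentiation of $h$ does not produce an obviously positive expression; the argument needs to descend one level further and rely on the nice factorization of $k''$ to finish. The other mild subtlety is verifying that the quantities being squared ($\ln(1+w)$ and $w + 2\ln(1+w)$) are strictly positive for $w > 0$, so that squaring preserves the inequality direction; both are immediate.
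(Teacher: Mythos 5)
Your argument is correct, but note that the paper itself offers no proof of this statement: it is imported verbatim as Theorem 1 of Chatzigeorgiou's paper \cite{C} and used as a black box in the proof of Lemma \ref{gammabound}. What you have written is therefore a self-contained replacement for the citation rather than an alternative to an argument in the text. Your reduction is sound: setting $W_{-1}(-e^{-u-1}) = -1-w$ with $w>0$ (legitimate since $-e^{-u-1}\in(-e^{-1},0)$ and $W_{-1}$ is strictly below $-1$ there) turns the defining identity into $u = w-\ln(1+w)$, and the two claimed bounds become $w<\sqrt{2u}+u$ and $w>\sqrt{2u}+\tfrac{2}{3}u$. Your verifications check out: for the first, $g(w)=\ln^{2}(1+w)+2\ln(1+w)-2w$ satisfies $g(0)=0$ and $g'(w)=\tfrac{2(\ln(1+w)-w)}{1+w}<0$; for the second, $h'(w)=\tfrac{2}{1+w}\bigl[(w+2\ln(1+w))(3+w)-9w\bigr]$ has the sign of $k(w)=w^{2}-6w+(6+2w)\ln(1+w)$, and indeed $k(0)=k'(0)=0$ with $k''(w)=\tfrac{2w(3+w)}{(1+w)^{2}}>0$, so $k>0$, $h>0$, and the strict inequalities follow after the (justified) squarings. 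This is essentially the same style of elementary monotonicity argument as in the cited source, which likewise works from the relation $(1+w)e^{-w}=e^{-u}$; what your write-up buys is that the paper's Lemma \ref{gammabound} no longer depends on an external reference for this bound.
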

Rewrite the expression inside $W_{-1}$ from Equation \ref{gamman} as
	\begin{equation}
		q(n) e^{q(n)} = -e^{-\left( -q(n) - \ln\left( -q(n) \right) - 1 \right) - 1}
	\end{equation}
and let $u(n)$ be defined as
	\begin{equation} \label{un}
		u(n) = -q(n) - \ln\left( -q(n) \right) - 1.
	\end{equation}
Since $xe^x$ is increasing for $x \ge -1$ and $q(n) > -1$, $q(n)e^{q(n)} > -e^{-1}$. Using this and the fact that $-e^{-x-1}$ is increasing, $-e^{-u(n) - 1} > -e^{-1}$ implies that $u(n) > 0$. Thus Theorem \ref{LambertBound} can be applied to $u(n)$. Using Theorem \ref{LambertBound}, Lemma \ref{gammabound} determines an asymptotic equivalence formula for the solutions $\gamma_{n}$ as a linear function of $n$.

\begin{lemma} \label{gammabound}
	Let $\gamma_{n}$ be the sequence defined in Equation \ref{gamman}, that is, $\gamma_{n}$ is the sequence of solutions to Equations \ref{W2} for all $n \in \N$. Then $\gamma_{n} \sim \frac{2n}{9}$.
\end{lemma}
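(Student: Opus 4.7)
The plan is to substitute asymptotic expansions for each ingredient of Equation \ref{gamman} into the defining formula for $\gamma_n$ and carry everything through the subleading order. Both $-W_{-1}(q(n)e^{q(n)})/\ln p(n)$ and $-1/a(n)$ contribute at order $n^2$, and these dominant terms cancel, so the asserted leading behavior $\gamma_n \sim 2n/9$ only emerges after an additional order is retained in each expansion.

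First I would expand $\ln p(n)$. Since $(n+1)^2(n-2) = n^3 - 3n - 2$, one can write $p(n) = 1/(1 - (3n+2)/n^3)$, and applying $-\ln(1-x) = x + x^2/2 + \cdots$ yields $\ln p(n) = 3/n^2 + 2/n^3 + O(1/n^4)$. Combined with $1/a(n) = (n^2-n-2)/3$, this gives the key expansion
\[
q(n) = -\frac{1}{a(n)}\ln p(n) = -1 + \frac{1}{3n} + O(1/n^2),
\]
so in particular $q(n) \to -1$ from above. Writing $\epsilon_n := q(n) + 1 = 1/(3n) + O(1/n^2)$ and substituting into $u(n) = -q(n) - \ln(-q(n)) - 1 = (1 - \epsilon_n) - \ln(1 - \epsilon_n) - 1$, the Taylor expansion $-\ln(1 - \epsilon_n) = \epsilon_n + \epsilon_n^2/2 + O(\epsilon_n^3)$ makes the terms linear in $\epsilon_n$ cancel, leaving $u(n) = \epsilon_n^2/2 + O(\epsilon_n^3) \sim 1/(18n^2)$. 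In particular, $\sqrt{2u(n)} \sim 1/(3n)$ and $u(n) = O(1/n^2)$.

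With these expansions in hand, Theorem \ref{LambertBound} applied to $W_{-1}(q(n)e^{q(n)}) = W_{-1}(-e^{-u(n)-1})$ sandwiches this quantity between $-1 - \sqrt{2u(n)} - u(n)$ and $-1 - \sqrt{2u(n)} - (2/3)u(n)$, both of which evaluate to $-1 - 1/(3n) + O(1/n^2)$. Inverting the series for $\ln p(n)$ gives $1/\ln p(n) = n^2/3 - 2n/9 + O(1)$, so
\[
-\frac{W_{-1}(q(n)e^{q(n)})}{\ln p(n)} = \left(1 + \frac{1}{3n} + O(1/n^2)\right)\left(\frac{n^2}{3} - \frac{2n}{9} + O(1)\right) = \frac{n^2}{3} - \frac{n}{9} + O(1),
\]
while $-1/a(n) = -n^2/3 + n/3 + 2/3$. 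Adding these cancels the $n^2/3$ terms and yields $\gamma_n = 2n/9 + O(1)$, equivalent to $\gamma_n \sim 2n/9$. The hard part is precisely this cancellation: the leading quadratic contributions must be tracked to sufficient precision so that the linear coefficient $2/9$ emerges correctly, and both the $-2n/9$ term in $1/\ln p(n)$ and the $1/(3n)$ correction in $W_{-1}$ contribute to that final coefficient.
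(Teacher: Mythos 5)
Your proof is correct and follows essentially the same strategy as the paper's: expand $q(n)+1\sim 1/(3n)$ to deduce $u(n)\sim 1/(18n^2)$, apply Chatzigeorgiou's bound to pin down $W_{-1}(q(n)e^{q(n)}) = -1-1/(3n)+O(1/n^2)$, and then track the cancellation of the $n^2/3$ terms from $-W_{-1}/\ln p(n)$ and $-1/a(n)$ to extract the linear coefficient $2/9$. The only cosmetic difference is that the paper expands $1/\ln p(n)$ via the Laurent series of $1/\ln(1+z)$ with an explicit bound $|c_k|\le 2^k$ on its coefficients (to justify a dominated-convergence step), whereas you invert the leading part of the series for $\ln p(n)$ directly, which is equally valid here.
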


	\begin{proof}
		Let $W_{n} = -W_{-1}(q(n)e^{q(n)})$. Recall that $q(n) \in (-1,0)$ for all $n$ and $q(n) \to -1$. This further implies that $W_{n} \to 1$ as $n \to \infty$. Next, it is shown that $n(W_{n}-1) \to 1/3$. Using simple bounds on $\log$, $q(n)+1$ can be bounded as
			\begin{equation*}
				\frac{1}{3(n+1)} \le q(n)+1 \le \frac{1}{3(n+1)} + \frac{(3n+2)^{2}}{6(n+1)^3 (n-2)}.
			\end{equation*}
		Thus $q(n) + 1 \sim 1/(3n)$ and
		$u(n) \sim 1/(18n^2)$ where $u(n)$ is defined in Equation \ref{un}. Combining this asymptotic equivalence and the bound in Theorem \ref{LambertBound}, it follows that $n\left(W_{n} - 1\right) \to 1/3$. Denote the Laurent series of $\ln(1+z)^{-1}$ about $z=0$ as
			\begin{equation}
				\frac{1}{\ln(1+z)} = \sum_{k=-1}^{\infty} c_{k} z^{k} = \frac{1}{z} + \frac{1}{2} - \frac{z}{12} + \frac{z^2}{24} - \cdots .
			\end{equation}
		The coefficients $c_{k}$ satisfy the recurrence
			\begin{equation*}
				c_{k} = \sum_{i=2}^{k+2} \frac{(-1)^{i} c_{k+1-i}}{i},
			\end{equation*}
		and induction shows that $\left|c_{k}\right| \le 2^{k}$ for all $k \ge 0$. 
		Thus
			\begin{align} \label{gammalimit}
				\lim_{n \to \infty} \frac{9\gamma_{n}}{2n} &= \lim_{n \to \infty} \frac{9}{2n} \left(\frac{-W_{-1}\left(q(n)e^{q(n)}\right)}{\ln(p(n))} - \frac{1}{a(n)}\right) \notag \\
					&= \lim_{n \to \infty} \frac{3(n+1)(n-2)}{2n(3n+2)} \left[ 3(W_{n}-1)n + (3W_{n}-2) \right] + \frac{9W_{n}}{4n} \notag \\
					&\hspace{1cm}+ \frac{9W_{n}}{2n} \sum_{k=1}^{\infty} c_{k} \left( \frac{3n+2}{(n+1)^2(n-2)} \right)^{k} \notag \\ 
					&= 1.
			\end{align}
		The first term in the preceding limits to $1$ as $n$ tends to infinity and $(9W_{n})/(4n) \to 0$. Lastly, the infinite sum limits to $0$ by the dominated convergence theorem.  
	\end{proof}

\subsection{Solution of Variational Problem}

Previously examined was the special case of the solution when $a = -s_{2}$, which gave some insight into the general case for $\gamma > 1$. The next two lemmas concern the extreme cases of parameter values $a$ and $b$. The extreme cases for this regime of $\gamma$ values is when either $a > -s_{2}$ or $a \le -3\gamma$. The variational solution to these cases is presented at the beginning of this section as they require only brief justification.

\begin{lemma} \label{k2}
	Consider the generalized edge-triangle exponential random graph model defined in Equation \ref{get}. Let $\beta_1 = a\beta_2 + b$. Then Equation \ref{varproblem} holds, and, for $\gamma > 1$ and $0 > a > -s_2$, the set of maximizers $U \subset \mathcal{W}$ is $U = \{f^{K_2}\}$.
\end{lemma}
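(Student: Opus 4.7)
The plan is to reduce the variational problem (\ref{limnorm}) to the one-variable minimization of $g(e) = ae + r(e)^\gamma$ on $[0,1]$, and then to pin its minimum down at $e = 1/2$, which corresponds (up to weak isomorphism) to the Tur\'an graphon $f^{K_2}$.

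First, exactly as in the proof of Theorem~\ref{gamma5/9}, I would write
\[
\psi_\infty^{(\beta,\gamma)}(T_\beta) = \beta_2 \inf_{\tilde f \in \widetilde{\mathcal{W}}} \bigl\{ ae + t^\gamma + \beta_2^{-1}\bigl(be - I(\tilde f)\bigr)\bigr\}.
\]
Because $be - I(\tilde f)$ is uniformly bounded on $\widetilde{\mathcal{W}}$ and $\beta_2 \to -\infty$, it suffices to minimize $ae + t^\gamma$ over the feasible region $R$; for each fixed $e$ the minimizing $t$ lies on the Razborov curve, so the problem collapses to minimizing $g$ on $[0,1]$.

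Next, I would restrict attention to $[0, 2/3]$. Since $\gamma > 1$, Lemma~\ref{sk} gives that $\{s_k\}$ is strictly increasing, so the hypothesis $a > -s_2$ forces $a > -s_k$ for every $k \ge 2$. The secant/convexity argument developed in the paragraph preceding this lemma, applied with $k = 1$, then yields $g(e) > g(1/2)$ for every $e \in [2/3, 1]$, with strict inequality since $a > -s_2$ strictly. On $[0, 1/2]$, $g(e) = ae$ is linear with negative slope, so its minimum on $[0, 1/2]$ is attained uniquely at $e = 1/2$ with value $a/2$.

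The main technical step---and what I expect to be the principal obstacle---is to verify $g(e) \ge a/2$ on the middle interval $[1/2, 2/3]$. My plan here is to combine the Goodman comparison $r_2(e)^\gamma \ge (e(2e-1))^\gamma = l(e)$ with the strict convexity of $l$ for $\gamma \ge 1$ (so that the secant slope of $l$ from $(1/2,0)$ to $(e, l(e))$ is controlled by $s_2$, and $s_2 > -a$ by hypothesis), and then to exploit the inflection decomposition of Lemma~\ref{inflect}, which splits $[1/2, 2/3]$ at $i_2$ into a concave-up region and a concave-down region for $r_2^\gamma$. Combining these convexity facts with the one-sided derivative formulas for $\partial_\pm g$ at $e_1$ and $e_2$ should rule out any interior critical point of $g$ that dips beneath $a/2$. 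Once this endpoint inequality is established, $g$ attains its minimum on $[0,1]$ uniquely at $e = 1/2$; the unique graphon (up to weak isomorphism) realizing edge density $1/2$ and triangle density $0$ is $f^{K_2}$, and (\ref{setconv}) then delivers $U = \{f^{K_2}\}$.
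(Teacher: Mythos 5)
The step you flag as ``the principal obstacle'' is not a hard step but a false one: for $\gamma > 1$ the inequality $g(e) \ge a/2$ fails immediately to the right of $e_1 = 1/2$. Plug $k=1$ into the paper's one-sided derivative formula $\partial_+ g(e_k) = a + \tfrac{3k\gamma}{k+1}\, t_k^{\gamma-1}$: since $t_1 = 0$ and $\gamma > 1$, the second term vanishes and $\partial_+ g(e_1) = a < 0$, so $g$ keeps decreasing past $e_1$. Equivalently, $r_2(e)^\gamma \sim (3/2)^\gamma (e - 1/2)^\gamma = o(e-1/2)$ near $e_1$, so the linear term $ae$ dominates and forces $g(e) < a/2$ on a right neighborhood of $1/2$. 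The Goodman comparison you propose cannot help, because $l(e) = (e(2e-1))^\gamma$ also vanishes together with its derivative at $e_1$, so $ae + l(e)$ dips below $a/2$ for the same reason, and $r_2^\gamma \ge l$ goes in the useless direction. A numerical check bears this out: take $\gamma = 2$, $a = -0.2$ (which satisfies $0 > a > -s_2 = -8/27 \approx -0.296$); then $g(1/2) = -0.1$ but $g(0.55) \approx -0.105$, so the minimizer lies strictly in the interior of $(e_1, e_2)$, not at $e_1$.

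Be aware that the paper's own one-line proof asserts $\partial_+ g(e_1) > 0$, which is inconsistent with the displayed formula for $\partial_+ g(e_k)$ and with the paper's proof of Proposition~\ref{s2}, where it explicitly states $\partial_+ g(e_1) = -s_2 < 0$ when $a = -s_2$. (For $\gamma = 1$ one has $\partial_+ g(e_1) = a + s_2 > 0$, which is presumably the calculation that was carried over by mistake.) So the gap is not merely in your write-up: Lemma~\ref{k2} as stated appears to be wrong for $\gamma > 1$. By continuity of the minimizer in $a$, as $a$ increases from $-s_2$ toward $0$ the minimizing edge density $e^\ast$ from Proposition~\ref{s2} slides from the interior of $(e_1, i_2)$ toward $e_1$ but never reaches it for $a < 0$, so the limiting graphon should have edge density strictly between $1/2$ and $2/3$ rather than being $f^{K_2}$. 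Your outline is structured sensibly, but the middle-interval estimate it rests on cannot be established because it is false.
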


	\begin{proof}
		Assuming $0 > a > -s_{2}$, $\partial_{+} g(e_1) > 0$. 
		Note that $g^{\prime}(e) = a < 0$ by assumption, since $g(e) = ae$ for $e \le e_1$. Thus $g$ is decreasing for $e < e_{1}$ and increasing for $e > e_{1}$, and the minimum of $g$ must occur at $e_{1}$. 
	\end{proof}

\begin{lemma} \label{1}
	Consider the generalized edge-triangle exponential random graph model defined in Equation \ref{get}. Let $\beta_1 = a\beta_2 + b$. Then Equation \ref{varproblem} holds, and, for $\gamma > 1$ and $a \le -3\gamma$, the set of maximizers $U \subset \mathcal{W}$ is $U = \{1\}$.
\end{lemma}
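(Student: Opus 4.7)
The plan is to reduce the variational problem, exactly as in the preceding lemmas for $\beta_2 \to -\infty$, to minimizing
\[
g(e) \;=\; ae + r(e,\gamma)
\]
on $[0,1]$, and then to show that this minimum is uniquely attained at $e = 1$. Since the triangle density along the lower boundary curve at $e = 1$ equals $1$, the unique minimizer $e = 1$ forces the limiting graphon to be the complete graphon, giving $U = \{1\}$.

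I would first dispatch the trivial segment. On $[0, 1/2] = I_1$ one has $r_1 \equiv 0$, so $g(e) = ae$ is linear with negative slope. The minimum on this interval is $g(1/2) = a/2$, and since $a \le -3\gamma$ with $\gamma > 1$,
\[
g(1/2) - g(1) \;=\; -\tfrac{a}{2} - 1 \;\ge\; \tfrac{3\gamma}{2} - 1 \;>\; 0,
\]
so $g > g(1)$ throughout $[0, 1/2]$.

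For $e \in [1/2, 1]$ the strategy is to combine Goodman's bound $r(e,\gamma) \ge l(e) = (2e^2 - e)^{\gamma}$ with convexity of $l$. Using the expression for $l^{\prime\prime}$ computed in the proof of Lemma \ref{sk}, the positive root of its quadratic factor is $x_2 = \tfrac{1}{4}\bigl(1 + (2\gamma-1)^{-1/2}\bigr)$, which satisfies $x_2 < 1/2$ whenever $\gamma > 1$. Hence $l^{\prime\prime} > 0$ on $(1/2, 1]$, so $l$ is strictly convex there. Because $l(1) = 1$ and $l^{\prime}(1) = 3\gamma$, the tangent line to $l$ at $e = 1$ is $y = 1 - 3\gamma(1-e)$, and strict convexity gives
\[
r(e,\gamma) \;\ge\; l(e) \;\ge\; 1 - 3\gamma(1 - e),
\]
with the second inequality strict for $e \in [1/2, 1)$. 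Consequently,
\[
g(e) - g(1) \;=\; -a(1 - e) + \bigl(r(e,\gamma) - 1\bigr) \;\ge\; \bigl(-a - 3\gamma\bigr)(1 - e) \;\ge\; 0,
\]
and equality forces $e = 1$: the first term is strictly positive off $e = 1$ when $a < -3\gamma$, and the inequality $l(e) \ge 1 - 3\gamma(1-e)$ is strict off $e = 1$ when $a = -3\gamma$.

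Combining both regimes, $g$ is uniquely minimized at $e = 1$, so $U = \{1\}$ as claimed. The only nontrivial step is the strict convexity of Goodman's function $l$ on $[1/2, 1]$ for $\gamma > 1$, which is the main obstacle but follows immediately from the sign analysis of $l^{\prime\prime}$ already recorded in the proof of Lemma \ref{sk}; once it is in hand, the tangent-line inequality together with $r \ge l$ does all of the work.
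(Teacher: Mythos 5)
Your proof is correct, but it follows a genuinely different route from the paper's. The paper proves this lemma by differentiating $g$ directly on each Razborov segment: writing $g^{\prime}(e) = a + 3\gamma \cdot \frac{(k-1)(k+\sqrt{k(k-e(k+1))})}{k(k+1)} r_{k}(e)^{\gamma-1}$ on the interior of $I_{k}$ and observing that the factor multiplying $3\gamma$ lies strictly in $(0,1)$ (since $k < k+\sqrt{k(k-e(k+1))} < k+1$ and $r_{k}(e)^{\gamma-1} \le 1$ for $\gamma > 1$), so that $g^{\prime}(e) < a + 3\gamma \le 0$ and $g$ is strictly decreasing, forcing the minimum at $e=1$. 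You never differentiate $r$ at all; instead you combine Goodman's bound $r(e,\gamma) \ge l(e)$ with the strict convexity of $l$ on $[1/2,1]$ for $\gamma > 1$ (correctly read off from the $l^{\prime\prime}$ computation in Lemma \ref{sk}, since $x_2 < 1/2$ exactly when $\gamma>1$) and the tangent line to $l$ at $e=1$ with slope $l^{\prime}(1)=3\gamma$, obtaining $g(e)-g(1) \ge (-a-3\gamma)(1-e) \ge 0$ with strictness off $e=1$, plus the trivial comparison on $[0,1/2]$. Your argument is closer in spirit to the secant-line comparisons the paper uses after Equation \ref{convex} and in Theorems \ref{equal} and \ref{between}, and it yields strict uniqueness of the minimizer cleanly, including the boundary case $a=-3\gamma$; the paper's derivative computation yields the slightly stronger fact that $g$ is monotone decreasing on all of $[0,1]$. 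Both arguments implicitly interpret the lower boundary value at $e=1$ as $t=1$, i.e.\ $g(1)=a+1$, consistent with the paper's usage elsewhere, so there is no gap there.
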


	\begin{proof}
		Note that for $e \neq e_{k}$ for any $k \ge 2$ and $e \in (e_2,1)$,
			\begin{equation*}
				g^{\prime}(e) = a + 3\gamma \sum_{k =1}^{\infty} \frac{(k-1)(k+\sqrt{k(k-e(k+1))})}{k(k+1)} \left( r_{k}(e) \right)^{\gamma - 1} \1{I_{k}}(e).
			\end{equation*}
		For $e \in (e_{k-1},e_{k})$, 
			\begin{equation*}
				k < k+\sqrt{k(k-e(k+1))} < k+1,
			\end{equation*}
		which implies that for all $k \ge 2$ and $e \in (e_{k-1},e_{k})$
			\begin{equation*}
				0 < \frac{(k-1)(k+\sqrt{k(k-e(k+1))})}{k(k+1)} \left( r_{k}(e) \right)^{\gamma - 1} < 1.
			\end{equation*}
		Therefore $g^{\prime}(e) < 0$ and the minimizing value for $g$ on its domain is $1$.
	\end{proof}

The remainder of the case $\gamma > 1$ is split into two seperate theorems. By Lemma \ref{sk} the sequence $\{s_{k}\}$ is increasing. The first theorem pertains to $a = -s_{n}$ for some $n \in \N$ and the second regards $-s_{n} > a > -s_{n+1}$ for some $n \in \N$. This requires the critical sequence of $\gamma$ values, $\{\gamma_{n}\}$ as defined in Section \ref{lambertw}, such that for $\gamma > \gamma_{n}$ and certain values of $a$ the solution to the supremum problem is not a Tur\'an graphon, but rather some graphon with edge density between two subsequent Tur\'an graphons.

\begin{theorem} \label{equal}
	Consider the generalized edge-triangle exponential random graph model defined in Equation \ref{get}. Let $\beta_1 = a\beta_2 + b$. Then Equation \ref{varproblem} holds; and, for $\gamma > 1$ and $a = -s_{n+1}$, the set of maximizers $U \subset \mathcal{W}$ is
		\begin{itemize}
			\item $U = \{ f^{K_{n+1}} \}$ if $b < 0$ and $\gamma \le \gamma_{n+1}$,

			\item $U = \{ f^{K_{n+1}}, f^{K_{n+2}} \}$ if $b = 0$ and $\gamma \le \gamma_{n+1}$,

			\item $U = \{ f^{K_{n+2}} \}$ if $b > 0$ and $\gamma \le \gamma_{n+1}$, and

			\item $U=\{ f \}$ if $\gamma > \gamma_{n+1}$ where $t(K_2,f) = e^{\ast}$ and $t(K_3,f) = r_{n+1}(e^{\ast})$ for some $e^{\ast} \in (e_{n},i_{n+1})$ with $r_{n+1}$ defined in Equation \ref{raz}.
		\end{itemize}
\end{theorem}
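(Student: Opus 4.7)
The plan is to solve the variational problem by minimizing $g(e) = ae + r(e,\gamma)$ on $[0,1]$, following the reduction established in the proof of Theorem \ref{gamma5/9}. With $a = -s_{n+1}$ and $\{s_k\}$ strictly increasing by Lemma \ref{sk}, the convexity argument in the passage preceding Proposition \ref{s2} first restricts the minimizer to $(e_{n-1},e_{n+1}]$, and routine case analysis on segment $n$ (combining $g(e_n) < g(e_{n-1})$ with the concavity structure from Lemma \ref{inflect} and the signs of the one-sided derivatives at $e_{n-1}$ and $e_n$) rules out an interior minimum there, so the minimum on segment $n$ is attained at $e_n$. Since $a = -s_{n+1}$ gives $g(e_n) = g(e_{n+1})$, the analysis then reduces to determining the behavior of $g$ on segment $n+1 = [e_n,e_{n+1}]$.

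The key dichotomy is governed by the sign of $\partial_{+}g(e_n) = -s_{n+1} + \frac{3n\gamma}{n+1} t_n^{\gamma - 1}$; the computation in Section \ref{lambertw} shows this is negative precisely when $\gamma > \gamma_{n+1}$, with $\gamma_{n+1}$ defined via the Lambert $W_{-1}$ function in Equation \ref{gamman}. When $\gamma \le \gamma_{n+1}$, so $\partial_{+}g(e_n) \ge 0$, I would argue the minimum on segment $n+1$ is at the endpoints: if $\gamma \le (n+5)/6$, Lemma \ref{inflect} yields concave-down $g$ and the conclusion is immediate; if $\gamma > (n+5)/6$, the increasing derivative $g^{\prime}$ on the concave-up half $[e_n,i_{n+1}]$ stays non-negative, and on the concave-down half $[i_{n+1},e_{n+1}]$ the chord-above-the-line property together with $g(i_{n+1}) \ge g(e_{n+1})$ forces $g \ge g(e_{n+1})$. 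The minimizers are then the Tur\'an graphons $f^{K_{n+1}}$ and $f^{K_{n+2}}$; because $I(f^{K_k}) = 0$, the subleading term $\beta_2^{-1}(be - I(\tilde{f}))$ in the variational problem reduces to a comparison of $be$, and as $\beta_2 \to -\infty$ with $\beta_2 < 0$ this tie-breaking selects $f^{K_{n+1}}$ when $b < 0$, $f^{K_{n+2}}$ when $b > 0$, and both when $b = 0$.

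When $\gamma > \gamma_{n+1}$, the strict negativity $\partial_{+}g(e_n) < 0$ rules out the endpoints as minimizers on segment $n+1$. This is moreover incompatible with concave-down behavior across a segment whose endpoints carry equal $g$-values (which would force $\partial_{+}g(e_n) \ge 0$), so Lemma \ref{inflect} automatically places an inflection at $i_{n+1}$. A short contradiction argument then shows $g^{\prime}(i_{n+1}) > 0$: otherwise the monotonicity of $g^{\prime}$ (increasing on $[e_n,i_{n+1}]$, decreasing on $[i_{n+1},e_{n+1}]$) would force $g^{\prime} \le 0$ throughout $[e_n,e_{n+1}]$, making $g$ strictly decreasing and contradicting $g(e_n) = g(e_{n+1})$. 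The intermediate value theorem applied to the increasing $g^{\prime}$ on $[e_n,i_{n+1}]$ delivers a unique zero $e^{\ast} \in (e_n,i_{n+1})$, a strict local and therefore global minimum, and tightness of the Razborov bound along $r_{n+1}$ combined with the Radin-Sadun construction from \cite{RS} identifies the limiting graphon $f$ with $t(K_2,f) = e^{\ast}$ and $t(K_3,f) = r_{n+1}(e^{\ast})$. The main technical obstacle throughout is the bookkeeping needed to rule out interior minima on segments $k \le n$ when those segments themselves acquire inflections for $\gamma > (k+4)/6$; this forces the preliminary reduction to $(e_{n-1},e_{n+1}]$ to rely on derivative sign calculations rather than on concavity alone, and the clean translation from the Lambert-$W_{-1}$ threshold $\gamma_{n+1}$ to the inflection threshold $(n+5)/6$ has to be made explicit.
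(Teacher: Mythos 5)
Your overall plan coincides with the paper's: solve the variational problem by minimizing $g(e) = ae + r(e,\gamma)$, restrict the minimizer to near segment $n+1$, and then use the Lambert-$W$ threshold $\gamma_{n+1}$ (via the sign of $\partial_{+}g(e_n)$) to decide between the Tur\'an endpoints and an interior critical point, with $b$ breaking ties. Your handling of segment $n+1$ itself, including the concave case $\gamma \le (n+5)/6$, the monotonicity of $g'$ on the convex half, the contradiction showing $g'(i_{n+1})>0$, and the tie-breaking by $b$, matches the paper.

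There is, however, a genuine gap in the reduction step. You invoke the passage preceding Proposition \ref{s2} to restrict the minimizer to $(e_{n-1}, e_{n+1}]$ and then propose to finish segment $n$ by ``routine case analysis'' combining $g(e_n)<g(e_{n-1})$, the concavity from Lemma \ref{inflect}, and one-sided derivative signs at $e_{n-1}$ and $e_n$. This cannot work in general: those data describe the \emph{shape} of $g$ on $[e_{n-1},e_n]$ but not the \emph{value} of a potential interior local minimum relative to $g(e_n)$. Indeed, once $\gamma > \gamma_n$ one has $\partial_{+}g(e_{n-1}) = -s_{n+1} + \tfrac{3(n-1)\gamma}{n}t_{n-1}^{\gamma-1} < 0$ (since $\tfrac{3(n-1)\gamma}{n}t_{n-1}^{\gamma-1}<s_n<s_{n+1}$), so $g$ may acquire an interior local minimum on the convex piece $(e_{n-1},i_n)$; whether that local minimum beats $g(e_n)$ is a magnitude question that sign information alone does not settle, and your own closing paragraph essentially concedes this. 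The paper sidesteps segment $n$ entirely by sharpening the convexity argument: for \emph{every} $e < e_n$ (not just $e \le e_{n-1}$), strict convexity of $l$ gives $\tfrac{l(e_n)-l(e)}{e_n-e} < l'(e_n) < s_{n+1} = -a$, and combining this with $r(e)^{\gamma}\ge l(e)$ and $r(e_n)^{\gamma}=l(e_n)$ yields the direct value comparison $g(e) > g(e_n)$ for all $e<e_n$. Replacing your segment-$n$ case analysis with this inequality closes the gap and makes the rest of your argument go through.
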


	\begin{proof}
		Let $\beta_1 = a\beta_2 + b$ and $\gamma > 1$. Then 
			\begin{align}
				\psi_{\infty}^{(\beta,\gamma)}(T_{\beta}) &= \sup_{\tilde{f} \in \widetilde{W}} \left\{ \beta_1 t(K_2,\tilde{f}) + \beta_2 t(K_3,\tilde{f})^{\gamma} - I(\tilde{f}) \right\} \notag \\
				&= \sup_{\tilde{f} \in \widetilde{W}} \left\{ \beta_2 \left( ae + t^{\gamma} \right) + be - I(\tilde{f}) \right\}.
			\end{align}
		As before, since $\beta_2 \to -\infty$, $(ae + t^{\gamma})$ must be minimized. The minimizing value occurs along the lower boundary curve,
			\begin{equation*}
				g(e) = ae + \sum_{k = 1}^{\infty} r_{k}(e) \1{I_{k}}(e).
			\end{equation*}
		By the convexity of $l$, $-a > l^{\prime}(e_{n})$.
		Also by the convexity of $l$, for $e < e_{n}$,
			\begin{equation*}
				\frac{l(e_n) - r(e)^{\gamma}}{e_{n} - e} \le \frac{l(e_n) - l(e)}{e_{n} - e} < l^{\prime}(e_{n}) < -a.
			\end{equation*}
		Therefore $g(e) > g(e_{n})$ for $e < e_{n}$ implying that the minimizing values of $g$ must lie in the closed interval $I_{n+1}$. Let $e^{\ast}$ be a minimizing value of $g$. First assume that $\gamma \le \gamma_{n+1}$. Then by the analysis of Section \ref{lambertw}
			\begin{equation*}
				\partial_{+} g(e_{n}) = -s_{n+1} + 3\gamma \frac{n}{n+1} t_{n}^{\gamma-1} \ge 0.
			\end{equation*}
		Since $g$ transitions from convex to concave on $I_{n+1}$, the minimizing values of $g$ are in the set $\{e_{n},e_{n+1}\}$ and the first conclusion is realized depending on the value of $b$. Now assume that $\gamma > \gamma_{n+1}$. Then
			\begin{equation*}
				\partial_{+} g(e_{n}) = -s_{n+1} + 3\gamma \frac{n}{n+1} t_{n}^{\gamma-1} < 0,
			\end{equation*}
		which implies that $g$ has a minimum at $e^{\ast}$ where $e^{\ast} \in (e_{n},i_{n+1})$. 
	\end{proof}

This characterizes the limiting behavior of the model defined in Equation \ref{get} for $a = -s_{n}$ for some $n \in \N$ and any $\gamma > 1$. The case where $a$ is caught between consecutive critical directions is now examined. In other words, $-s_{n} > a > -s_{n+1}$ for some $n \in \N$ and $\gamma > 1$.

\begin{theorem} \label{between}
	Consider the generalized edge-triangle exponential random graph model defined in Equation \ref{get}. Let $\beta_1 = a\beta_2 + b$. Then Equation \ref{varproblem} holds; and, for $\gamma > 1$ and $-s_{n} > a > -s_{n+1}$, the set of maximizers $U \subset \mathcal{W}$ is 
		\begin{itemize}
			\item $U = \{ f \}$ if $\gamma > \tilde{\gamma}_{n}$ and $s_{n} < -a < \gamma \left( \frac{3n-3}{n+1} \right) t_{n}^{\gamma - 1}$ where $t(K_2,f) = e^{\ast}$ and $t(K_3,f) = r_{n}(e^{\ast})$ for $e_{n-1} < e^{\ast} < i_{n}$,

			\item $U = \{ f^{K_{n+1}} \}$ if $\gamma \le \frac{n+4}{6}$ or $\gamma \left( \frac{3n-1}{n+1} \right) t_{n}^{\gamma - 1} \le -a \le \gamma \left(\frac{3n}{n+1} \right) t_{n}^{\gamma - 1}$,

			\item $U = \{ f \}$ if $\gamma > \gamma_{n+1}$ and $\gamma \left(\frac{3n}{n+1} \right) t_{n}^{\gamma - 1} < -a < s_{n+1}$ where $t(K_2,f) = e^{\ast}$ and $t(K_3,f) = r_{n+1}(e^{\ast})$ for $e_{n} < e^{\ast} < i_{n+1}$;
		\end{itemize}
	where
		\begin{equation} \label{tildegamma}
			\tilde{\gamma}_{n} = -\frac{W_{0}\left(\tilde{q}(n)e^{\tilde{q}(n)} \right)}{\ln{\tilde{p}(n)}} - \frac{1}{\tilde{a}(n)}
		\end{equation}
	with
		\begin{equation}
			\tilde{p}(n) = \frac{(n-2)(n+1)^2}{n^3} \text{,\hspace{.2cm}} \tilde{a}(n) = -\frac{3}{n^2} \text{, and\hspace{.2cm}} \tilde{q}(n) = -\frac{\ln{\tilde{p}(n)}}{\tilde{a}(n)}.
		\end{equation}
\end{theorem}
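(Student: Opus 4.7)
The plan is to follow the template of Theorem \ref{equal} and reduce the variational problem in Equation \ref{limnorm} to the minimization of the piecewise function $g(e) = ae + \sum_{k \ge 1} r_{k}(e)^{\gamma} \1{I_{k}}(e)$ on $[0,1]$. Since $\beta_{2} \to -\infty$ and $be - I(\tilde{f})$ is uniformly bounded on the compact space $\widetilde{\mathcal{W}}$, to leading order the supremum is captured by the minimum of $g$, with $b$ only breaking ties among coexisting minimizers. The key localization step reuses the convexity argument developed in the paragraph preceding Proposition \ref{s2}: because $r(e)^{\gamma} \ge l(e)$ with $l$ strictly convex for $\gamma > 1$ and equality holding at each $e_{k}$, the hypothesis $-s_{n} > a > -s_{n+1}$ confines every minimizer of $g$ to $I_{n} \cup I_{n+1}$.

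I would then split the analysis by which of the two sub-intervals contains the minimizer, controlled by the one-sided derivatives $\partial_{-}g(e_{n})$ and $\partial_{+}g(e_{n})$ together with the concavity description of $r_{n}^{\gamma}$ and $r_{n+1}^{\gamma}$ supplied by Lemma \ref{inflect}. The three bullets of the theorem correspond exactly to the possible positions of $-a$ relative to the two one-sided slopes at $e_{n}$. When $\partial_{+}g(e_{n}) \ge 0 \ge \partial_{-}g(e_{n})$, the point $e_{n}$ is a local minimum of $g$, producing the middle band for $-a$ in the second bullet; this regime is also forced when $\gamma \le (n+4)/6$ because then both $r_{n}^{\gamma}$ and $r_{n+1}^{\gamma}$ are concave by Lemma \ref{inflect} and their minima on the respective sub-intervals must occur at endpoints. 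When $-a$ exceeds the right-slope threshold $3n\gamma/(n+1)\,t_{n}^{\gamma - 1}$, so that $\partial_{+}g(e_{n}) < 0$ and $r_{n+1}^{\gamma}$ has an inflection on $I_{n+1}$, the function $g$ develops an interior local minimum on $(e_{n}, i_{n+1})$; this is the third bullet, and the condition $\gamma > \gamma_{n+1}$ is precisely what makes the open window for $-a$ nonempty, via the Lambert-$W$ calculation already carried out in Section \ref{lambertw}. Symmetrically, when $\partial_{-}g(e_{n}) > 0$ and $r_{n}^{\gamma}$ has an inflection, $g$ has an interior local minimum on $(e_{n-1}, i_{n})$, giving the first bullet.

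The main obstacle, and the reason $\tilde{\gamma}_{n}$ enters the statement, is showing that this first regime is nonvacuous. This requires the open interval for $-a$ between $s_{n}$ (imposed by the hypothesis) and $3(n-1)\gamma/(n+1)\,t_{n}^{\gamma-1}$ (imposed by $\partial_{-}g(e_{n}) > 0$) to be nonempty, i.e.,
\begin{equation*}
s_{n} < \frac{3(n-1)\gamma}{n+1}\,t_{n}^{\gamma - 1}.
\end{equation*}
Using $s_{n} = n(n+1)(t_{n}^{\gamma} - t_{n-1}^{\gamma})$ and the identity $t_{n-1}/t_{n} = (n-2)(n+1)^{2}/n^{3} = \tilde{p}(n)$, dividing through by $t_{n}^{\gamma}$ rewrites this as
\begin{equation*}
1 + \tilde{a}(n)\gamma < \tilde{p}(n)^{\gamma}, \qquad \tilde{a}(n) = -\tfrac{3}{n^{2}},
\end{equation*}
which is structurally identical to Inequality \ref{W1} that produced $\gamma_{n}$, but with $(p,a)$ replaced by $(\tilde{p},\tilde{a})$. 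Running the substitution $-t = \gamma + 1/\tilde{a}$ converts the corresponding equality into the Lambert equation $(t \ln \tilde{p})\,e^{t \ln \tilde{p}} = \tilde{q}\,e^{\tilde{q}}$. For $n \ge 3$, direct computation yields $\tilde{q}(n) < -1$ but $\tilde{q}(n)e^{\tilde{q}(n)} \in (-e^{-1},0)$; unlike in Section \ref{lambertw}, where the trivial solution sat on $W_{0}$ and the nontrivial one on $W_{-1}$, the roles of the two branches are now reversed, so the positive root is selected by the principal branch $W_{0}$, producing precisely the formula \ref{tildegamma} for $\tilde{\gamma}_{n}$.

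The remainder is routine. Monotonicity of both sides in $\gamma$ shows that $\gamma > \tilde{\gamma}_{n}$ is equivalent to the nonvacuity inequality above. For any interior critical point isolated in the first or third regime, the convex-then-concave structure of $r_{n}^{\gamma}$ (respectively $r_{n+1}^{\gamma}$) combined with the sign data of $\partial_{\pm}g$ at the endpoints $e_{n-1}, e_{n}, e_{n+1}$ ensures this critical point is the global minimum of $g$ on $I_{n} \cup I_{n+1}$, beating both endpoint values. Once the minimizing edge density $e^{\ast}$ is identified, the passage back from the variational problem to the set of maximizing graphons $U$ proceeds exactly as in Proposition \ref{s2}, using the characterization from \cite{RS} of the graphons realizing a prescribed point on a Razborov segment.
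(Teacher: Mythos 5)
Your proposal follows the same skeleton as the paper's proof: reduce the variational problem to minimizing $g$ on $[0,1]$, localize the minimizer to $I_{n}\cup I_{n+1}$ via the convexity of $l$ and the inequality $r(e)^{\gamma}\ge l(e)$, split by the position of $-a$ relative to thresholds at $e_{n}$, and invoke the Lambert-$W$ calculation to make the interior windows nonvacuous. Your observation about the branch reversal for $\tilde{\gamma}_{n}$ -- that $\tilde{q}(n)<-1$ forces the trivial root onto $W_{-1}$ so the nontrivial one is picked out by $W_{0}$, the opposite of the $\gamma_{n}$ case -- is correct and is a genuine explanation the paper leaves implicit.

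There is, however, a concrete gap in how you split the three regimes. You assert that the three bullets are delineated by the one-sided derivatives at $e_{n}$, so that the middle band is $\partial_{+}g(e_{n})\ge 0\ge\partial_{-}g(e_{n})$, i.e.\ $\frac{3(n-1)\gamma}{n+1}t_{n}^{\gamma-1}\le -a\le\frac{3n\gamma}{n+1}t_{n}^{\gamma-1}$. But the theorem's middle band has lower endpoint $\frac{(3n-1)\gamma}{n+1}t_{n}^{\gamma-1}$, which is $l'(e_{n})$, not the $\partial_{-}$ threshold $\frac{3(n-1)\gamma}{n+1}t_{n}^{\gamma-1}$; these differ ($3n-1$ versus $3n-3$). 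The distinction matters: $\partial_{-}g(e_{n})\le 0$ is a local statement -- it only says $g$ is decreasing into $e_{n}$ from the left. Since $r_{n}^{\gamma}$ is convex on $(e_{n-1},i_{n})$ and concave on $(i_{n},e_{n})$, the sign of the derivative at $e_{n}^{-}$ does not exclude an interior dip of $g$ on the convex part going below $g(e_{n})$; your ``local minimum'' claim leaves this possibility open. The paper instead uses a global comparison: for all $e<e_{n}$, the secant slope $\frac{t_{n}^{\gamma}-r(e)^{\gamma}}{e_{n}-e}\le\frac{l(e_{n})-l(e)}{e_{n}-e}<l'(e_{n})$, so $-a\ge l'(e_{n})$ forces $g(e)>g(e_{n})$ on all of $[e_{n-1},e_{n})$, which genuinely pins $e^{\ast}$ to $[e_{n},i_{n+1})$. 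Symmetrically, for bullet one the paper takes $\partial_{-}g(e_{n})\ge 0$ as sufficient (not iff) for an interior minimizer on $(e_{n-1},i_{n})$. You would need the $l'(e_{n})$ comparison, not the one-sided derivative, to justify the middle bullet's left endpoint as stated in the theorem, and to rule out the interior minimizer in $(e_{n-1},i_{n})$ when $-a$ lies between $\frac{3(n-1)\gamma}{n+1}t_{n}^{\gamma-1}$ and $l'(e_{n})$.
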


	\begin{proof}
		Let $e^{\ast}$ be the minimizing value of $g$. According to the analysis that appears after Equation \ref{convex}, since $-s_{n} > a > -s_{n+1}$, $e^{\ast} \in (e_{n-1},i_{n}) \cup [e_{n}, i_{n+1})$. If $\gamma \le (n+4)/6$, then $g$ is concave down and so $e^{\ast} = e_{n}$. For $e < e_{n}$, 
			\begin{equation*}
				\frac{l(e_n) - r(e)^{\gamma}}{e_{n} - e} \le \frac{l(e_n) - l(e)}{e_n - e} < l^{\prime}(e_n).
			\end{equation*}
		This implies that if $a < - l^{\prime} (e_n)$, then $e^{\ast} \in [e_{n},i_{n+1})$. Similarly, if $a > - l^{\prime}(e_n)$, then $e^{\ast} \in (e_{n-1},i_{n})$ or $e^{\ast} = e_n$. First assume $a < -l^{\prime}(e_n)$. A necessary and sufficient condition for $e^{\ast} \in (e_{n},i_{n+1})$ is $\partial_{+} g(e_{n}) < 0$. This condition holds when 
			\begin{equation*}
				-a > 3\gamma\left(\frac{n}{n+1}\right)t_{n}^{\gamma-1};
			\end{equation*}
		and,
			\begin{equation*}
				s_{n+1} > 3\gamma\left(\frac{n}{n+1}\right)t_{n}^{\gamma-1}
			\end{equation*}
		if and only if $\gamma > \gamma_{n+1}$ by Section \ref{lambertw}. This implies that for $\gamma > \gamma_{n+1}$ and $s_{n+1} > -a > 3\gamma \left(\frac{n}{n+1}\right)t_{n}^{\gamma-1}$, $e^{\ast} \in (e_{n},i_{n+1})$. Contrarily, if
			\begin{equation*}
				3\gamma\left(\frac{n}{n+1}\right)t_{n}^{\gamma-1} \ge -a > l^{\prime}(e_{n}),
			\end{equation*}
		one finds that $\partial_{+} g(e_n) \ge 0$, thus $e^{\ast} = e_{n}$. Now suppose $a > -l^{\prime}(e_n)$. In this range of $a$ values, $e^{\ast} \in (e_{n-1},i_{n})$ or $e^{\ast} = e_{n}$. A sufficient condition for $e^{\ast} \in (e_{n-1}, i_{n})$ is $\partial_{-} g(e_{n}) \ge 0$. This condition is satisfied when 
			\begin{equation*}
				-a \le 3\gamma \left(\frac{n-1}{n+1}\right) t_{n}^{\gamma - 1};
			\end{equation*}
		and,
			\begin{equation*}
				s_{n} < 3\gamma \left(\frac{n-1}{n+1}\right) t_{n}^{\gamma - 1}
			\end{equation*}
		if and only if $\gamma > \tilde{\gamma}_{n}$ as defined in Equation \ref{tildegamma}. Therefore, if $\gamma > \tilde{\gamma}_{n}$ and $s_{n} < -a \le 3\gamma \left(\frac{n-1}{n+1}\right) t_{n}^{\gamma - 1}$, then $e^{\ast} \in (e_{n-1}, i_{n})$.

	\end{proof}

\begin{remark}
	An analysis similar to Lemma \ref{gammabound} may be done for the value of $\tilde{\gamma}_{n}$ and it is conjectured that $\tilde{\gamma}_{n} \sim \frac{4n}{9}$. The details will differ slightly due to the fact that $\tilde{\gamma}_{n}$ involves the branch $W_0$ of the Lambert $W$ function rather than $W_{-1}$.
\end{remark}

\begin{remark} \label{travel}
	This theorem indicates that for large enough $\gamma$, that is $\gamma \gtrsim \frac{4n}{9}$, the minimizing value of $g$, $e^{\ast}$, travels along the interval $(e_{n-1},e_{n+1})$. As $-a$ increases from $s_{n}$ to $s_{n+1}$, $e^{\ast}$ traverses this interval from left endpoint to right endpoint. Furthermore, as $\gamma$ increases, the inflection points $i_{n}$ on the interval $I_{n}$ get pushed arbitrarily close to the right endpoint $e_{n+1}$ and so $g$ becomes concave up on the entirety of the interval $I_{n}$ as $\gamma \to \infty$. This indicates a smooth transition between adjacent Tur\'an graphons as opposed to the case of $\gamma = 1$ where the transitions between Tur\'an graphons are abrupt jumps.
\end{remark}

The conclusions obtained in Sections \ref{<1} and \ref{>1} characterize the extremal asymptotic behavior of the generalized edge-triangle model through functional convergence in the cut topology in the space $\widetilde{\mathcal{W}}$. These can also be analyzed through a probabilistic lens. By combining Equation \ref{setconv} with the Theorems in the preceding sections and a diagonal argument, there exist subsequences of the form $\{(n_i, \beta_{2,i})\}_{i \in \N}$ where $n_{i} \to \infty$ and $\beta_{2,i} \to \infty$ or $-\infty$ as $i \to \infty$ such that the following holds. For $a$ and $b$ fixed, let $\{G_{i}\}_{i \in \N}$ be a sequence of random graphs drawn from the sequence of probability distributions $\PR_{n_{i}}^{(\beta_{2,i},\gamma)}$ where $G_{i}$ has $n_{i}$ vertices. Then 
	\begin{equation}
		\delta_{\square}\left( \tilde{f}^{G_{i}}, \widetilde{U} \right) \to 0 \hspace{.25cm} \text{in probability as} \hspace{.25cm} i \to \infty,
	\end{equation}
where $U \subset \mathcal{W}$ depends on $a$ and $b$ as in the preceding Theorems.

\subsection{Locating the Critical Point} \label{lcp}

The results of this section reveal the edge density of the limiting graphon for certain values of $\gamma$ as first the network size grows to infinity, then as the parameters diverge along straight lines. Fix $\gamma$, $a$, and $b$. The behavior for the case of $\gamma \le 1$ is fully characterized by Theorems \ref{yinrinfad}, \ref{gamma5/9} and Lemmas \ref{gammalessast}, \ref{gammaast}, and \ref{gammagreaterast}. Now for $\gamma > 1$, if $a > -s_2$ or $a \le -3\gamma$, Lemmas \ref{k2} and \ref{1} furnish the limiting graphon immediately. For $-s_2 \ge a > -3\gamma$, Proposition \ref{s2} and Theorems \ref{equal} and \ref{between} reveal the segment of $g$ on which the minimum must lie for $\gamma$ large enough. For smaller values of $\gamma$ these same results produce the exact limiting graphon as a certain Tur\'an graphon. For $\gamma$ sufficiently large, the exact limiting graphon is not clear as these results did not reveal the exact edge density, but rather an open interval in which the density must lie. Suppose that the minimum $e^{\ast}$ lies in the open interval between $e_{k-1}$ and $e_{k}$. Thus for $\gamma$ in this region, the problem of finding the limiting graphon reduces to finding the roots of the equation
	\begin{align} \label{zero}
		0 = a + &\frac{3(k-1)\gamma}{k(k+1)} \left( \frac{(k-1)(k-2\sqrt{k(k-e(k+1))})(k+\sqrt{k(k-e(k+1))})^2}{k^2 (k+1)^2} \right)^{\gamma - 1} \notag \\  &\cdot(k+\sqrt{k(k-e(k+1))}) 
	\end{align}
for $e \in I_{k}$. Using the substituion $x = k + \sqrt{k(k-e(k+1))}$ this is equivalent to finding the roots of
	\begin{equation}
		p(x;k,a,\gamma) = x\left( x^2 (3k-2x) \right)^{\gamma - 1} + \frac{ak(k+1)}{3\gamma(k-1)}\left( \frac{k^2 (k+1)^2}{k-1} \right)^{\gamma - 1}
	\end{equation}
for $x \in (k,k+1)$. Due to the structure of $p$, it is possible to write the root of $p$ in the interval $(k,k+1)$ as a nested radical. Thus for $\gamma$ large enough, the exact limiting edge density is determined. Let $c$ be defined as
	\begin{equation}
		c = -\frac{ak(k+1)}{3\gamma(k-1)}\left( \frac{k^2 (k+1)^2}{k-1} \right)^{\gamma - 1}.
	\end{equation}
Solving Equation \ref{zero} simplifies to solving
	\begin{equation}
		x^{2\gamma - 1}(3k-2x)^{\gamma - 1} - c = 0
	\end{equation}
in the interval $(k,k+1)$. Rearranging and letting $m = \gamma - 1$ and $n = 1/(1-2\gamma)$, one finds the identity
	\begin{equation} \label{polyid}
		x = \frac{3k}{2} - \frac{\sqrt[m]{c}}{2} \sqrt[mn]{x} .
	\end{equation}
By applying the Identity \ref{polyid} infinitely many times one obtains the nested radical representation of the root. This nested radical converges to $x^{\ast} > k$ by choosing a sufficient starting value and examining the sequence of partial iterates using the monotone convergence theorem; moreover, $x^{\ast}$ is the root of $p$ in $(k,k+1)$. Thus $x^{\ast}$ is the root that solves the variational problem when converted back to the variable $e$. This realization leads to the following Corollary of Proposition \ref{s2}, Theorem \ref{equal}, and Theorem \ref{between}. Recall the definition of $\tilde{\gamma}_{k}$ from Equation \ref{tildegamma}.
	\begin{corollary}
		Consider the generalized edge-triangle model defined in Equation \ref{get}. Suppose $\gamma > 1$ and $-s_{2} \ge a > -3\gamma$. Then the results of Proposition \ref{s2}, Theorem \ref{equal}, and Theorem \ref{between} hold. Furthermore, if $e^{\ast}$ lies in the open interval $(e_{k-1},e_{k})$ as obtained in these results and $\gamma > \max\{1, \tilde{\gamma}_{k} \}$, then
			\begin{equation}
				e^{\ast} = \frac{k^2 - \left( x^{\ast} - k \right)^2}{k(k+1)},
			\end{equation}
		with $m,n,$ and $c$ defined as above and
			\begin{equation}
				x^{\ast} = \lim_{n \to \infty} x_{n}
			\end{equation}
		where $x_1 = k+1$ and 
			\[ x_{i+1} = \frac{3k}{2} - \frac{\sqrt[m]{c}}{2} \sqrt[mn]{x_{i}}. \]
	\end{corollary}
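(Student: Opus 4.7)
The plan is to establish the first claim immediately by citation, and then reduce the determination of $e^{\ast}$ to a one-dimensional fixed-point problem that is solved via monotone iteration. For the first part, under the hypotheses $\gamma > 1$ and $-s_{2} \ge a > -3\gamma$ one lies in the union of parameter regimes covered by Proposition \ref{s2}, Theorem \ref{equal}, and Theorem \ref{between}, so those results apply verbatim and the first assertion requires no further argument.

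For the explicit formula, the starting point is the first-order condition. Since $e^{\ast}$ is specified to lie in the open interval $(e_{k-1}, e_{k})$ and $g$ is differentiable there, $e^{\ast}$ satisfies $g'(e^{\ast}) = 0$, which upon substituting the explicit form of $r_{k}$ is precisely Equation \ref{zero}. I would then apply the substitution $x = k + \sqrt{k(k - e(k+1))}$, which realizes a bijection $I_{k} \to [k, k+1]$ with inverse $e = (k^{2} - (x-k)^{2})/(k(k+1))$; this converts Equation \ref{zero} into $p(x; k, a, \gamma) = 0$ with the target root $x^{\ast}$ in $(k, k+1)$. Isolating the factors and taking $m$-th roots (noting $c > 0$ since $a < 0$ and $k \ge 2$, and $m = \gamma - 1 > 0$) yields the fixed-point identity $x = T(x) := \frac{3k}{2} - \frac{1}{2}\sqrt[m]{c}\,\sqrt[mn]{x}$ as in Equation \ref{polyid}.

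The remaining and principal step is to verify convergence of the iteration $x_{i+1} = T(x_{i})$ with $x_{1} = k+1$. Here I would use: for $\gamma > 1$, $mn = (\gamma - 1)/(1 - 2\gamma) < 0$, so $T$ is smooth and strictly increasing on $(0, \infty)$; and $T(k+1) < k+1$, which reduces to the algebraic comparison $(k-2)(k+1)^{-1/(mn)} < \sqrt[m]{c}$, ensured by the hypothesis $\gamma > \max\{1, \tilde{\gamma}_{k}\}$ together with the definition of $c$. Combining $T(x^{\ast}) = x^{\ast}$ with monotonicity of $T$ gives $x^{\ast} < x_{2} = T(x_{1}) < x_{1}$; induction then shows that $\{x_{i}\}$ is monotonically decreasing and bounded below by $x^{\ast}$. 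The monotone convergence theorem supplies a limit, which by continuity of $T$ must be a fixed point of $T$ in $[x^{\ast}, k+1]$; uniqueness of the root of $p$ on $(k, k+1)$, inherited from the concavity analysis of $g$ on $I_{k}$ used in Theorems \ref{equal} and \ref{between}, identifies this limit as $x^{\ast}$. Inverting the substitution yields $e^{\ast} = (k^{2} - (x^{\ast} - k)^{2})/(k(k+1))$, as claimed.

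The main obstacle I expect is the verification that $T$ maps the starting interval into itself, and in particular the inequality $T(k+1) < k+1$ for the full range of admissible parameters; this calls for a careful algebraic manipulation using the explicit form of $c$ and the constraint $\gamma > \max\{1,\tilde{\gamma}_{k}\}$. A secondary but routine technicality is justifying uniqueness of the root of $p$ on $(k, k+1)$ from the semi-derivative sign information for $g$ already extracted in the ambient theorems.
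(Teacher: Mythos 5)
Correct, and essentially the paper's own argument: the same substitution $x=k+\sqrt{k(k-e(k+1))}$, the same iteration started at $x_{1}=k+1$, monotone convergence, and identification of the limit with the root of $p$ in $(k,k+1)$; your treatment of the monotone decrease (via $T$ increasing and $T(k+1)<x_{1}$, bounding below by the fixed point) and of the limit identification supplies details the paper merely asserts, whereas the paper instead proves the lower bound $x_{i}>k$ directly. One caution on your flagged obstacle: the inequality $T(k+1)<k+1$ is equivalent to $p(k+1;k,a,\gamma)<0$, i.e.\ to $\partial_{+}g(e_{k-1})<0$, and this is supplied by the lower bound on $-a$ in the parameter cases that place $e^{\ast}$ in the open interval (it does not follow from $\gamma>\max\{1,\tilde{\gamma}_{k}\}$ and the form of $c$ alone), while the paper's induction uses the complementary bound $c\le k^{3\gamma-2}$, obtained from $-a\le 3\gamma\tfrac{k-1}{k+1}t_{k}^{\gamma-1}$ when $\gamma\ge\tilde{\gamma}_{k}$.
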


%\frac{k}{2} - \frac{\sqrt[m]{c}}{2} \sqrt[mn]{\frac{3k}{2} - \frac{\sqrt[m]{c}}{2} \sqrt[mn]{\frac{3k}{2} - \frac{\sqrt[m]{c}}{2} \sqrt[mn]{\frac{3k}{2} - \cdots}}}

	\begin{proof}
		Let $\gamma > 1$. Suppose that $e^{\ast}$ lies in the open interval $(e_{k-1},e_{k})$. In Theorem \ref{equal}, this occurs for $-a = s_{k}$ and $\gamma > \gamma_{k}$. In Theorem \ref{between}, this occurs for 
			\[ s_{k} < -a < 3\gamma \left(\frac{k-1}{k+1}\right) t_{k}^{\gamma-1} \]
		and $\gamma > \tilde{\gamma}_{k}$. Define the sequence $\{x_{i}\}$ such that $x_1 = k+1$ and
			\[ x_{i+1} = \frac{3k}{2} - \frac{\sqrt[m]{c}}{2} \sqrt[mn]{x_{i}}  \]
		for all $i \in \N$. This sequence is decreasing. Now, if $\gamma \ge \tilde{\gamma}_{k}$, then 
			\[ s_{k} \le 3\gamma \left(\frac{k-1}{k+1}\right) t_{k}^{\gamma-1}. \]
		It follows since $-a \le 3\gamma \left(\frac{k-1}{k+1}\right) t_{k}^{\gamma-1}$ that $c \le k^{3\gamma - 2}$. Clearly $x_1 > k$. Suppose that $x_{i-1} > k$, then
			\begin{align*}
				x_{i} &>  \frac{3k}{2} - \frac{\sqrt[m]{c}}{2} \sqrt[mn]{k} \\
					&= \frac{3k}{2} - \frac{1}{2} c^{\frac{1}{\gamma-1}} \left( k \right)^{\frac{1-2\gamma}{\gamma-1}} \\
					&\ge \frac{3k}{2} - \frac{1}{2} \left(k^{3\gamma-2}\right)^{\frac{1}{\gamma-1}} \left( k \right)^{\frac{1-2\gamma}{\gamma-1}} \\
					&= k.
			\end{align*}
		Thus $x_{i} > k$ for all $i \in \N$. By the monotone convergence theorem, $x_{i}$ converges to some $x^{\ast}$ in $[k,k+1)$. In fact, since $e^{\ast}$ is known to be in the interval $(e_{k-1},e_{k})$, $x^{\ast} \in (k,k+1)$ because $e=e_{k}$ corresponds to $x=k$.
	\end{proof}

\begin{table}[h!]
\begin{equation*}
\begin{array}{|c||ccccc|}
\hline
a  & -s_2 & -s_2 & -s_2 & -s_2 & -1.1 \\
\hline
\gamma & 2 & 4 & 10 & 100 & 2 \\
\hline
e^{\ast} & 0.575 & 0.599 & 0.625 & 0.658 & 0.703 \\ 
\hline
\end{array}
\end{equation*}
\caption{Limiting graphon edge density $e^{\ast}$ for given parameter values.} \label{table}
\end{table}

\begin{remark}
Some numerical results for finding the root of interest of the function $p$ are now presented in simple cases for the purposes of illustration. As an example, examine $\gamma = 2$ and $a = -s_2$. By Proposition \ref{s2}, the minimum of $g$ lies on the second segment, so $k=2$. This now reduces to the problem of determing the roots of the polynomial 
	\begin{equation*}
		p(x;2,-s_2,2) = -2x^4 + 6x^3 - \frac{32}{3}
	\end{equation*}
in the interval $(2,3)$. The examples in Table \ref{table} considering $\gamma = 4,10,$ and $100$ are handled similarly. As another example, let $\gamma = 2$ and $a = -1.1$. By Theorem \ref{between}, $2 > \tilde{\gamma}_3$ and $s_3 < -a < 9/8$ so by bullet $(1)$, $e^{\ast}$ lies on the segment $k = 3$ and we obtain the polynomial
	\begin{equation*}
		p(x;3,-1.1,2) = -2x^4 + 9x^3 - \frac{396}{5}.
	\end{equation*}
The corresponding $e^{\ast}$ for these cases are contained in Table \ref{table}.
\end{remark}

\section{Generalized Edge-Clique Model}

This section concerns a more general model by replacing the triangle homomorphism density with that of any clique on $s$ vertices. Let $s \ge 3$ and define $g_{s}(e)$ to be the minimum achievable density of the complete graph on $s$ vertices, $K_{s}$, that can appear as a subgraph in any graph on $n$ vertices with edge density $e$. Lov\'asz and Simonovits conjectured in \cite{LS} that the asymptotic lower bound on the density of $K_{s}$ as a subgraph of $G_{n}$ for a fixed edge density $e$ is given by
	\begin{align} \label{grho}
		g_{s}(e) = &\frac{(t-1)!}{(t-s+1)!(t(t+1))^{s-1}} \left( t - (s-1) \sqrt{t(t-e(t+1))} \right) \notag \\
			&\cdot\left( t + \sqrt{t(t-e(t+1)} \right)^{s-1},
	\end{align}
where $e \in \left[ \frac{t-1}{t}, \frac{t}{t+1} \right]$ and $e > 1 - \frac{1}{s-1}$. Furthermore, they conjectured that this bound is tight. It was proven by Razborov in \cite{R} that the lower bound on the asymptotic density is tight when $s = 3$. This bound proved indispensible in determining the solution to the variational problem in the $\beta_2 \to -\infty$ limit. The next step was acheived by Nikiforov in 2011 who was able to verify the case of $s = 4$ in \cite{N}. Reiher settled the conjecture when he proved in 2016 that this bound is tight and holds for all $s \ge 2$ \cite{Re}. By the Kruskal-Katona Theorem, the upper bound on the realizable density of $K_s$ is $G_{s}(e) = e^{\frac{s}{2}}$. Thus a similar analysis to the one exhibited in this paper can be applied to the exponential random graph model with Hamiltonian consisting of edge density and the homomorphism density of a clique on $s$ vertices for $s \ge 3$,
	\begin{equation} \label{2dim}
		\PR_{n}^{(\beta,\gamma)}(G_{n}) = \exp\left( n^2 \left( \beta_1 t(K_{2}, G_{n}) + \beta_2 t(K_{s}, G_{n})^{\gamma} - \psi_{n}^{(\beta,\gamma)} \right) \right),
	\end{equation}
where $\beta \in \R^{2}$ and $\gamma \in \R_{\ge 0}$.

This paper dealt solely with the case of $s = 3$. The next Theorem regards the positive $\beta_2$ limit of the generalized edge-clique model for any clique $K_s$ with $s \ge 3$. 

\begin{theorem} \label{rclique}
	Consider the generalized edge $s$-clique exponential random graph model defined in Equation \ref{2dim} for $s \ge 3$. Let $\beta_1 = a\beta_2 + b$. Then
		\begin{equation}
			\lim_{\beta_2 \to \infty} \sup_{\tilde{f} \in \widetilde{F}^{\ast}(\beta_2)} \left\{ \delta_{\square} \left( \tilde{f}, \widetilde{U} \right) \right\} = 0, 
		\end{equation}
	where for $\gamma \ge 2/s$, the set $U \subset \mathcal{W}$ is:
		\begin{itemize}
			\item $U = \left\{ 0 \right\}$ if $a < -1$ or $a = -1$ and $b < 0$,
			\item $U = \left\{ 0,1 \right\}$ if $a = -1$ and $b = 0$,
			\item $U = \left\{ 1 \right\}$ if $a > -1$ or $a = -1$ and $b > 0$;
		\end{itemize}
	and for $\gamma < 2/s$, the set $U \subset \mathcal{W}$ is:
		\begin{itemize}
			\item $U = \left\{ 1 \right\}$ if $a \ge -s\gamma / 2$, and
			\item $U = \left\{ f \right\}$ if $a < -s\gamma / 2$,
		\end{itemize}
	where 
		\begin{equation}
			f(x,y) = \left\{  \begin{array}{ll} 1 & 0 \le x,y \le \left( \frac{-2a}{s\gamma} \right)^{\frac{1}{s\gamma - 2}} \\ 0 & \text{otherwise.} \end{array} \right.
		\end{equation}
\end{theorem}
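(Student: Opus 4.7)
The plan is to adapt the proof of Theorem \ref{yinrinfad} (the $s=3$ case) to general $s \ge 3$, replacing the Kruskal--Katona upper boundary $t = e^{3/2}$ by its generalization $t = e^{s/2}$. Writing $\beta_1 = a\beta_2 + b$ and applying the variational characterization \ref{limnorm}, I would first factor out $\beta_2$:
\begin{equation*}
\psi_\infty^{(\beta,\gamma)} = \beta_2 \sup_{\tilde f \in \widetilde{\mathcal W}} \left\{ a\, t(K_2, \tilde f) + t(K_s, \tilde f)^\gamma + \beta_2^{-1}\bigl(b\, t(K_2, \tilde f) - I(\tilde f)\bigr)\right\}.
\end{equation*}
Since $b\, t(K_2,\cdot) - I(\cdot)$ is bounded on the compact space $\widetilde{\mathcal W}$, the $\beta_2^{-1}$ term vanishes uniformly as $\beta_2 \to \infty$, so the maximizers must concentrate on the set optimizing $a\,e + t^\gamma$ over the realizable region $R$. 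For $\beta_2 > 0$, the objective is strictly increasing in $t$ at each fixed $e$, so the supremum is achieved along the Kruskal--Katona upper boundary $t = e^{s/2}$. The saturating graphons are $\mathbf 1_{[0,c]^2}$ with $c = \sqrt e$, which take values in $\{0,1\}$ and therefore have $I = 0$, reducing the problem to the one-dimensional optimization $\sup_{e \in [0,1]}\{a\,e + e^{s\gamma/2}\}$.

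Call this function $h(e)$. Its second derivative $h''(e) = (s\gamma/2)(s\gamma/2 - 1)\,e^{s\gamma/2 - 2}$ shows that the convexity is controlled entirely by the sign of $s\gamma - 2$. In the first regime $\gamma \ge 2/s$, the function $h$ is convex (linear when $\gamma = 2/s$), so its maximum over $[0,1]$ is attained at an endpoint: $h(0) = 0$ corresponds to the empty graphon, while $h(1) = a+1$ corresponds to the complete graphon. The three-way dichotomy follows from the sign of $a+1$, with the lower-order $b$ term breaking ties when $a = -1$ (since $I$ vanishes at both $0$ and $1$, the tie is decided by the sign of $b$). In the second regime $\gamma < 2/s$, the function $h$ is strictly concave and $h'(1) = a + s\gamma/2$. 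If $a \ge -s\gamma/2$, then $h$ is nondecreasing on $[0,1]$ and the maximum is at $e = 1$; otherwise, the unique interior critical point solving $h'(e) = 0$ is $e^* = (-2a/(s\gamma))^{2/(s\gamma - 2)}$, and the extremal graphon is $\mathbf 1_{[0,c]^2}$ with $c = \sqrt{e^*} = (-2a/(s\gamma))^{1/(s\gamma - 2)}$, which matches the formula stated.

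The main obstacle in this argument is the uniqueness (up to weak isomorphism) of the limiting graphon that saturates Kruskal--Katona at a given intermediate edge density $e^* \in (0,1)$. Without rigidity, other graphons sharing the same pair of densities could appear in the cut-distance limit and $\widetilde U$ would be larger than asserted. For block graphons $p \cdot \mathbf 1_{[0,c]^2}$ this is elementary since saturation $c^{s(s-1)/2} p^{\binom{s}{2}} = (c^2 p)^{s/2}$ forces $p = 1$, while the general case rests on the known extremal characterization for the graphon Kruskal--Katona inequality. Everything else is a direct generalization of the $s=3$ argument and introduces no additional difficulty, since the only place where $s$ enters nontrivially is through the exponent in $e^{s/2}$.
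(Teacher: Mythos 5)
Your proposal is correct and follows essentially the same route as the paper's own proof: factor out $\beta_2$, observe that the $b\, t(K_2,\cdot) - I(\cdot)$ term is bounded and hence subdominant, reduce via Kruskal--Katona to the one-variable optimization of $ae + e^{s\gamma/2}$ on $[0,1]$, and classify by the sign of $s\gamma - 2$ (convex vs.\ concave), with $b$ breaking ties at $a = -1$. Your additional remark about rigidity of the Kruskal--Katona extremizers at intermediate edge densities is a legitimate point that the paper's proof does not spell out, but it does not alter the structure of the argument.
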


\begin{proof}
	Let $s \ge 3$. The limiting normalization constant takes the form
		\begin{align*}
			\psi_{\infty}^{(\beta,\gamma)} = \sup_{\tilde{f} \in \widetilde{\mathcal{W}}} \left\{ \beta_2 (at(K_2,\tilde{f}) + t(K_s,\tilde{f})^{\gamma}) + bt(K_2,\tilde{f}) - I(\tilde{f}) \right\}.
		\end{align*}
	As $\beta_2 \to \infty$, the term $bt(K_2,\tilde{f}) - I(\tilde{f})$ is bounded. Therefore the function
		\begin{equation*}
			g(\tilde{f}) = at(K_2,\tilde{f}) + t(K_s,\tilde{f})^{\gamma}
		\end{equation*}
	must be maximized over the graphon space. Let $e = t(K_2,\tilde{f})$. Using the Kruskal-Katona bound, this is equivalent to maximizing 
		\[ g(e) = ae + e^{s\gamma/2} \]
	on the interval $[0,1]$. If $\gamma \ge 2/s$, then $g^{\prime \prime}(e) \ge 0$. Thus the maximizer is either the empty or complete graphon and the conclusions for $\gamma \ge 2/s$ are gathered by considering the values of $a$ and $b$. If $\gamma < 2/s$ then there are two cases. For $a \ge -s\gamma / 2$, $g^{\prime}(e) > 0$ for $e > 0$ and so the maximizer is the complete graphon. If $a < -s\gamma / 2$, then $g$ is first increasing then decreasing. Therefore the maximizer lies in $(0,1)$ and is determined by solving $g^{\prime}(e)=0$.
\end{proof}

\section{Acknowledgments}
The author would like to express many thanks to his advisor, Dr.\ Mei Yin, for introducing him to this problem, as well as all of the fruitful conversations and indispensable guidance. Also, a great many thanks are owed to the author's fianc\'e for several proofreads and bearing with him every step of the way.

\section{Appendix}

In this section, the statements and proofs for the case of $5/9 < \gamma \le \log_{\frac{27}{16}}(3/2)$. The variational solution relies heavily on the behavior of the sequence $\{s_{k}\}$, which transitions between increasing and decreasing on this interval of $\gamma$ values. Recall that there is a critical value of $\gamma$, denoted $\gamma^{\ast}$ such that when $\gamma = \gamma^{\ast}$, $s_2 = 3\gamma$ and 
	\begin{equation} \label{critgam}
		\gamma^{\ast} = \frac{W_0 \left(2\ln\left(9/2\right)\right)}{\ln\left(9/2\right)}.
	\end{equation}

\begin{figure}[t!]
	\centering
		\begin{subfigure}
			\centering
			\includegraphics[keepaspectratio, width=.49\textwidth]{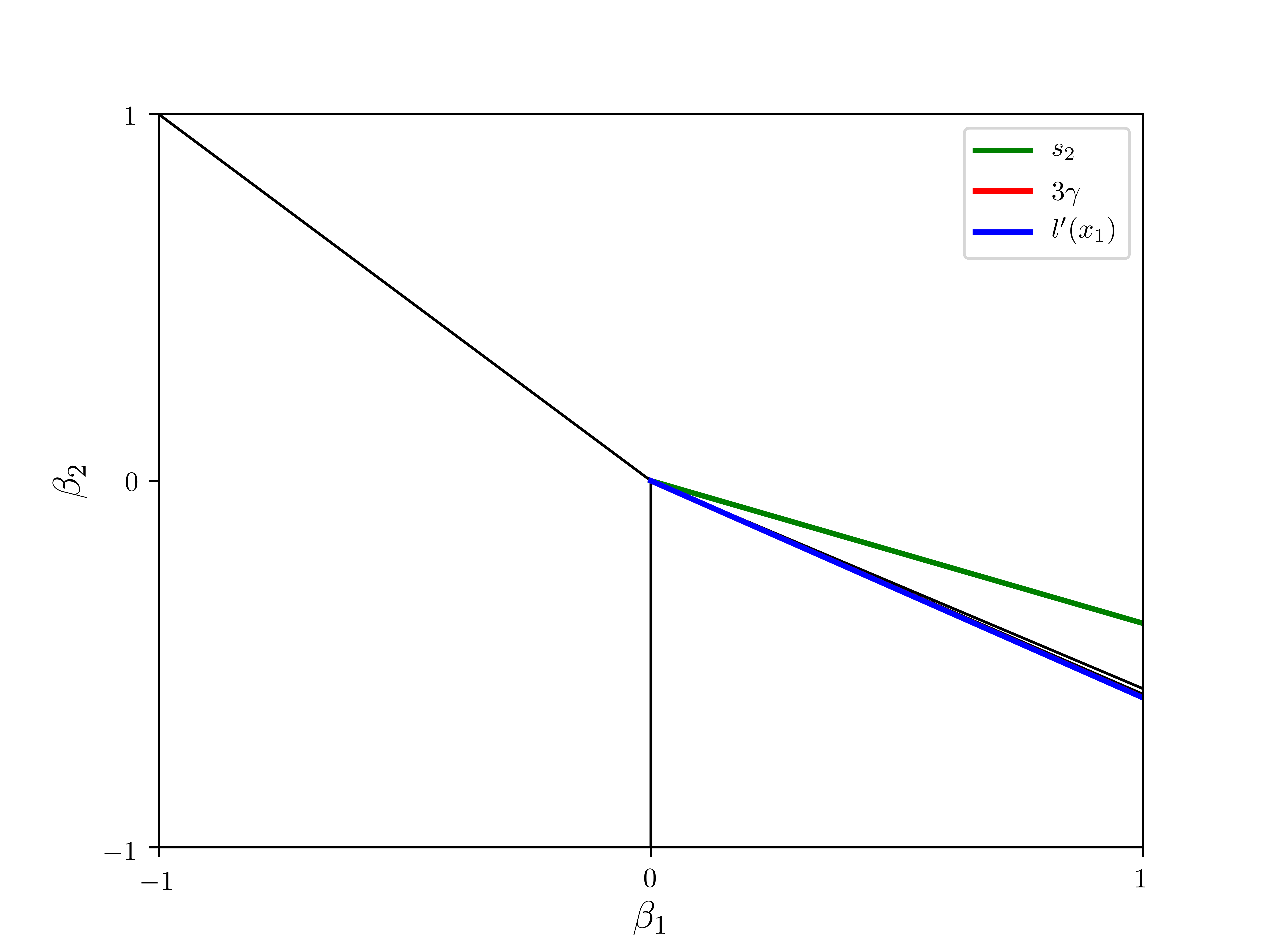}
			\label{fig1:sub1}
		\end{subfigure}
		\begin{subfigure}
			\centering
			\includegraphics[keepaspectratio, width=.49\textwidth]{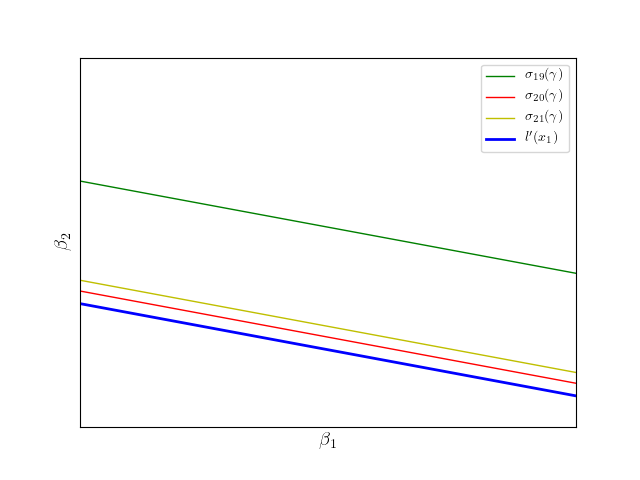}
			\label{fig1:sub2}
		\end{subfigure}
	\caption{The plot at left displays the vectors of the critical directions for $\gamma < \gamma^{\ast}$. Here the vector associated with the limit $3\gamma$ is below the vector associated with $s_2$. The plot at right shows that the value of $n$ from Lemma \ref{gammalessast} is $n = 20$ for $l^{\prime}(x_2) < -a \le s_{20}$. }
\end{figure}

\begin{lemma} \label{gammalessast}
	Consider the generalized edge-triangle exponential random graph model defined in Equation \ref{get}. Let $\beta_1 = a\beta_2 + b$. Then
		\begin{equation} \label{sup2}
			\lim_{\beta_2 \to -\infty} \sup_{\tilde{f} \in \widetilde{F}^{\ast}(\beta_2)} \left\{ \delta_{\square} \left( \tilde{f}, \widetilde{U} \right) \right\} = 0.
		\end{equation}
	Assume that $5/9 < \gamma < \gamma^{\ast}$ is defined as in Equation \ref{critgam}. Let $\gamma_{n}^{\ast}$ be defined as
		\begin{equation}
			\gamma_{n}^{\ast} = \frac{\ln\left( \frac{(n-2)(n+1)}{n(n-1)} \right)}{\ln\left( \frac{(n-2)(n+1)^2}{n^3} \right)}.
		\end{equation}
	Recall from Lemma \ref{sk} 
		\begin{equation}
			x_2 = \frac{1}{4} \left( 1 + \frac{1}{\sqrt{2\gamma - 1}} \right).
		\end{equation}
	Then $\gamma_{n}^{\ast} \to 2/3$ as $n \to \infty$ and the set $U$ is defined as follows:
		\begin{itemize}
			\item If $-a \le l^{\prime}(x_2)$, then $U = \{f^{K_2}\}$.
			\item If $-a \ge s_{2}$, then $U = \{1\}$.
			\item If $3\gamma \le -a < s_{2}$, then the following cases hold.
				\begin{itemize}
					\item If $a > -2$ or $a = -2$ and $b < 0$, then $U = \{ f^{K_{2}} \}$.
					\item If $a = -2$ and $b = 0$, then $U = \{ f^{K_2}, 1\}$.
					\item If $a < -2$ or $a = -2$ and $b > 0$, then $U = \{ 1 \}$.
				\end{itemize}
			\item Suppose $l^{\prime}(x_2) < -a < 3\gamma$. Let $n$ be the least positive integer such that $-a \le s_{n}$ and $p_{n} > x_{1}$ where $l^{\prime}(p_{n}) = s_{n}$, thus there is also a corresponding $\gamma_{n}^{\ast}$. Now define $a_{n} = \frac{2(n+1)}{n-1} t_{n}^{\gamma}$. Firstly, if $\gamma > \gamma_{n}^{\ast}$, then the following cases hold
				\begin{itemize}
					\item If $-a < a_{n-1}$ or $-a = a_{n-1}$ and $b < 0$, then $U = \{ f^{K_{2}} \}$.
					\item If $-a = a_{n-1}$ and $b = 0$, then $U = \{ f^{K_2}, f^{K_{n}} \}$.
					\item If $-a = s_{n}$ and $b < 0$ or $a_{n-1} < -a < s_{n}$ or $-a = a_{n-1}$ and $b > 0$, then $U = \{ f^{K_{n}} \}$.
					\item If $-a = s_{n}$ and $b = 0$, then $U = \{ f^{K_{n}}, f^{K_{n+1}} \}$.
					\item If $-a = s_{n}$ and $b > 0$, then $U = \{ f^{K_{n+1}} \}$.
				\end{itemize}
			Secondly, if $\gamma < \gamma_{n}^{\ast}$, then $U = \{ f^{K_2} \}$. Lastly, consider $\gamma = \gamma_{n}^{\ast}$ giving the following scenarios.
				\begin{itemize}
					\item If $-a < s_{n}$ or $-a = s_{n}$ and $b < 0$, then $U = \{ f^{K_2} \}$.
					\item If $-a = s_{n}$ and $b = 0$, then $U = \{ f^{K_2}, f^{K_{n}}, f^{K_{n+1}}\}$.
					\item If $-a = s_{n}$ and $b > 0$, then $U = \{ f^{K_{n+1}} \}$.
				\end{itemize}
		\end{itemize}
\end{lemma}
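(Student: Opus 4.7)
The plan is to follow the template of Theorems \ref{gamma5/9} and \ref{log-1}. As $\beta_2 \to -\infty$ the variational principle in Equation \ref{limnorm} reduces to minimizing $g(e) = ae + r(e)^\gamma$ on $[0,1]$, with the subleading correction $be - I(\tilde f)$ serving only to break ties between graphons of equal leading-order score. Since $\gamma < \gamma^{\ast} < 1$, Lemma \ref{inflect} gives that each segment $r_k^\gamma$ is strictly concave on $I_k$, so the minimum of $g$ lies in the discrete set $\{e_k : k \ge 1\} \cup \{1\}$. The sign of $g(e_k) - g(e_{k-1}) = (e_k - e_{k-1})(a + s_k)$, controlled by $-a$ versus $s_k$, drives the whole analysis, and the tie between $g(e_1)$ and $g(e_n)$ occurs exactly when $-a = a_n$.

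By Lemma \ref{sk} together with $\gamma < \gamma^{\ast}$, the sequence $\{s_k\}$ decreases from $s_2 > 3\gamma$, reaches a valley minimum at least $l^{\prime}(x_2)$ (via the MVT identity $s_k = l^{\prime}(p_k)$), then increases back to the limit $3\gamma$; hence $\{s_k\}$ crosses any horizontal line $s = -a$ at most twice. I would then split into four regimes. (i) $-a \le l^{\prime}(x_2)$: no crossing, $g$ is strictly increasing along the Tur\'an points and the estimate $l^{\prime}(x_2) < 2$ on our $\gamma$-window gives $g(e_1) < g(1)$, so $U = \{f^{K_2}\}$. (ii) $-a \ge s_2$: no crossing either, $g$ is non-increasing; a quick bound $s_2 = 6(2/9)^\gamma > 2$ throughout $[5/9, \gamma^{\ast}]$ yields $g(1) < g(e_1)$ and $U = \{1\}$. (iii) $3\gamma \le -a < s_2$: one crossing, with $g(e_2) > g(e_1)$ and $g(e_k)$ eventually non-increasing since $s_k \to 3\gamma \le -a$; the contest reduces to $g(e_1) = a/2$ versus $g(1) = a + 1$, split by the sign of $a + 2$ with $b$ as tiebreaker. (iv) $l^{\prime}(x_2) < -a < 3\gamma$: $\{s_k\}$ genuinely dips below $-a$ before rising back above on the increasing branch. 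I take $n$ to be the least index on that increasing branch with $s_n \ge -a$; then $g(e_k)$ strictly decreases across the valley down to $g(e_{n-1})$ and strictly increases for $k \ge n$, so only $e_1$ and $e_{n-1}$ remain as discrete candidates, with comparison $-a \gtrless a_{n-1}$.

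The critical value $\gamma_n^{\ast}$ is defined precisely by the collinearity of $v_1^\gamma, v_{n-1}^\gamma, v_n^\gamma$, equivalently $a_{n-1} = a_n = s_n$, and a short algebraic manipulation of these equalities yields the stated closed form. For $\gamma > \gamma_n^{\ast}$ one has $a_{n-1} < s_n$, opening a nondegenerate range $(a_{n-1}, s_n] \ni -a$ in which $f^{K_n}$ is the unique minimizer; for $\gamma < \gamma_n^{\ast}$ the inequality reverses and $f^{K_2}$ dominates throughout the admissible window; at $\gamma = \gamma_n^{\ast}$ with $-a = s_n$ the three graphons $f^{K_2}, f^{K_n}, f^{K_{n+1}}$ simultaneously tie, and since $I$ vanishes on Tur\'an graphons the tiebreaker $be - I(\tilde f)$ reduces to $be$, with the sign of $b$ selecting among them. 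The asymptotic $\gamma_n^{\ast} \to 2/3$ follows from Taylor expanding $\ln(1 + O(1/n))$ in the definition. The hardest step is the case enumeration in regime (iv): organizing the many subcases by the signs of $-a - a_{n-1}$, $-a - s_n$, and $b$, including the three-way tie point at $\gamma = \gamma_n^{\ast}$, $-a = s_n$, $b = 0$, requires careful bookkeeping, though once the candidate minima are identified at leading order the tiebreaking via the $I$-correction is mechanical.
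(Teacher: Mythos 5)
Your proposal is correct and follows essentially the same route as the paper: minimize $g(e) = ae + r(e)$ over the Tur\'an points $\{e_k\} \cup \{1\}$ (concavity of each Razborov segment for $\gamma < 1$ forces this), exploit the down--then--up shape of $\{s_k\}$ from Lemma~\ref{sk} via the MVT identity $s_k = l'(p_k)$ to control the signs of $g(e_k) - g(e_{k-1})$, locate the candidate minimizers at $e_1$, the valley-bottom Tur\'an point $e_{n-1}$, and $e_n$ in the tied case, resolve the $g(e_1)$ vs.\ $g(e_{n-1})$ contest by $-a$ vs.\ $a_{n-1}$ with the threshold $\gamma_n^{\ast}$ arising from the collinearity/equality $a_{n-1}=s_n$, and finish with the $be$ tiebreak since $I$ vanishes on Tur\'an graphons. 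This mirrors the paper's argument in all essentials (the paper introduces both $n_1$ and $n_2$ explicitly for the two crossings of the valley but the analysis, as you correctly observe, ultimately depends only on the second crossing); your reading of the index condition as $p_n > x_2$ rather than the paper's printed $x_1$ is the intended one.
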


	\begin{proof}
		Let $\beta_1 = a\beta_2 + b$ and $\frac{5}{9} < \gamma < \gamma^{\ast}$. Then 
			\begin{align}
				\psi_{\infty}^{(\beta,\gamma)}(T_{\beta}) &= \sup_{\tilde{f} \in \widetilde{W}} \left\{ \beta_1 t(K_2,\tilde{f}) + \beta_2 t(K_3,\tilde{f})^{\gamma} - I(\tilde{f}) \right\} \notag \\
				&= \sup_{\tilde{f} \in \widetilde{W}} \left\{ \beta_2 \left( ae + t^{\gamma} \right) + be - I(\tilde{f}) \right\}.
			\end{align}
		The preceding maximization problem must be solved. As $\beta_2 \to -\infty$, $(be - I)$ is bounded, so $ae + t^{\gamma}$ must be minimized. Minimizing this expression occurs along the lower boundary curves of the region of realizable densities. Let $g(e) = ae + r(e)$ where $r(e)$ is defined as in Equation \ref{LowerBoundaryCurve}. The assumption that $\gamma < \gamma^{\ast}$ implies that $3\gamma < s_{2}$. Since the sequence $\{s_{k}\}$ is first decreasing then increasing, $s_{2} > s_{n}$ for all $n > 2$. First assume that $-a \ge s_{2}$. Then $-a > s_{n}$ for all $n > 2$. Note that if $a < -s_{n}$, then $g(e_{n}) < g(e_{n-1})$. Thus it holds that $g(e_{n}) < g(e_{n-1})$ for all $n > 2$ with possible equality at $n = 2$. Regardless of the equality at $n = 2$, the minimizing value of $g$ occurs at $1$ which translates to the complete graphon and so $U = \{1\}$.

		Now assume that $-a \le l^{\prime}(x_2)$. Then $-a < s_{n}$ for all $n \in \N$ with possible equality at some point $m \in \N$. Note that for all $\gamma$, $s_{2} > l^{\prime}(x_2)$. This ensures that $m > 2$. For all $n \in \N$, this implies that $g(e_{n-1}) < g(e_{n})$ with possible equality at $m > 2$. Thus the minimum of $g$ occurs at $e_1 = \frac{1}{2}$ and so $U = \{ f^{K_{2}} \}$.

		Suppose that $3\gamma \le -a < s_{2}$. Let $m = \sup\{ n \in \N : -a \le s_{n} \}$. This supremum must exist and be finite since $\{s_{k}\}$ is first decreasing then increasing to $3\gamma$. Then $-a < s_{n}$ for $n < m$, $-a \le s_{m}$, and $-a > s_{n}$ for $n > m$. These values for $a$ correspond to properties of the sequence $g(e_{n})$. Thus $g(e_{n-1}) < g(e_{n})$ for $n < m$, $g(e_{m-1}) \le g(e_{m})$, and $g(e_{n-1}) > g(e_{n})$ for $n > m$. The possible minimizing values for the function $g$ are $\frac{1}{2}$ and $1$. This requires relating $g(e_1)$ to $g(1)$. If $a < -2$ or $a = -2$ and $b > 0$, then $U = \{ 1 \}$. If $a > -2$ or $a = -2$ and $b < 0$, then $U = \{ f^{K_2} \}$. If $a = -2$ and $b = 0$, then $U = \{f^{K_2}, 1\}$.

		Lastly, suppose that $l^{\prime}(x_2) < -a < 3\gamma$. Let $n_1$ and $n_2$ be defined as
			\[ n_1 = \sup \{ n \in \N : -a \le s_{n} \hspace{.2cm}\&\hspace{.2cm} p_{n} < x_2 \text{  where  } l^{\prime}(p_{n}) = s_{n} \} \]
		and
			\[ n_2 = \inf \{ n \in \N : -a \le s_{n} \hspace{.2cm}\&\hspace{.2cm} p_{n} > x_2 \text{  where  } l^{\prime}(p_{n}) = s_{n} \}.\]
		Then $-a \le s_{n}$ for $n \le n_1$ and $n \ge n_2$ while $-a > s_{n}$ for $n_1 < n < n_2$. This gives that $g(e_{n-1}) < g(e_{n})$ for $n < n_1$ and $n > n_2$ with possible equality at $n = n_1$ or $n = n_2$. Also $g(e_{n-1}) > g(e_{n})$ for $n_1 < n < n_2$. This means that the possible minimizers of $g$ occur at $e_1$, $e_{n_2 - 1}$, and $e_{n_2}$ is included if $-a = s_{n_2}$. First assume that $-a < s_{n_2}$. Then the possible minimizers of $g$ are $e_1$ and $e_{n_2 - 1}$. Now the relation between $g(e_1)$ and $g(e_{n_2 - 1})$ must be determined:
			\begin{align*}
				g(e_1) > g(e_{n_{2}-1}) &\iff \frac{a}{2} > ae_{n_{2}-1} + t_{n_{2}-1}^{\gamma} \iff a\left(\frac{1}{2} - e_{n_{2}-1}\right) > t_{n_{2}-1}^{\gamma} \\
					&\iff a < \frac{t_{n_{2}-1}^{\gamma}}{\frac{1}{2} - e_{n_{2}-1}} = -2\frac{n_{2}}{(n_{2}-2)} t_{n_{2}-1}^{\gamma}.
			\end{align*}
		Define $a_{n} = \frac{2(n+1)}{(n-1)} t_{n}^{\gamma}$. Therefore $g(e_1) > g(e_{n_{2}-1})$ if $-a > a_{n_{2}-1}$, $g(e_1) < g(e_{n_{2}-1})$ if $-a < a_{n_{2}-1}$, and equal if $-a = a_{n_{2}-1}$. Now $a_{n_{2}-1} > s_{n_{2}}$ if the subsequent string of equivalences hold
			\begin{align*}
				a_{n_{2}-1} > s_{n_{2}} &\iff \frac{2n_{2}}{(n_{2}-2)} t_{n_{2}-1}^{\gamma} > \frac{t_{n_{2}}^{\gamma} - t_{n_{2}-1}^{\gamma}}{e_{n_{2}} - e_{n_{2}-1}} \\ 
					&\iff \frac{2}{(n_{2}+1)(n_{2}-2)} t_{n_{2}-1}^{\gamma} > t_{n_{2}}^{\gamma} - t_{n_{2}-1}^{\gamma} \\ 
					&\iff \frac{2}{(n_{2}+1)(n_{2}-2)} > \left( \frac{n_{2}^3}{(n_{2}-2)(n_{2}+1)^2} \right)^{\gamma} - 1 \\
					&\iff \ln\left( \frac{n_{2}(n_{2}-1)}{(n_{2}+1)(n_{2}-2)} \right) > \gamma \ln\left( \frac{n_{2}^3}{(n_{2}-2)(n_{2}+1)^2} \right) \\
					&\iff \gamma < \frac{\ln\left( \frac{n_{2}(n_{2}-1)}{(n_{2}+1)(n_{2}-2)} \right)}{\ln\left( \frac{n_{2}^3}{(n_{2}-2)(n_{2}+1)^2} \right)} = \gamma_{n_{2}}^{\ast}.
			\end{align*}
		If $\gamma < \gamma_{n}^{\ast}$, then $a_{n-1} > s_{n}$. Similarly, if $\gamma > \gamma_{n}^{\ast}$, then $a_{n-1} < s_{n}$, and, if $\gamma = \gamma_{n}^{\ast}$, then $a_{n-1} = s_{n}$. Consider the case where $\gamma < \gamma_{n_{2}}^{\ast}$. Since $-a < s_{n_{2}}$, $-a < a_{n_{2}-1}$ as well. Therefore $g(e_{1}) < g(e_{n_2 - 1})$ and so $U = \{ f^{K_{2}} \}$. If $\gamma = \gamma_{n_2}^{\ast}$, then $a_{n_{2}-1} = s_{n_2}$ and so $-a < a_{n_{2}-1}$, further implying that $U = \{ f^{K_{2}} \}$. Suppose that $\gamma > \gamma_{n_{2}}^{\ast}$. This yields $a_{n_2 - 1} < s_{n_{2}}$. If $-a < a_{n_{2}-1}$, then $g(e_{1}) < g(e_{n_2 - 1})$ giving $U = \{ f^{K_2} \}$. If $-a = a_{n_{2} - 1}$, then $g(e_{1}) = g(e_{n_{2} - 1})$. Therefore the value of the parameter $b$ will play a role in determing the maximizing graphon. If $b = 0$, then $U = \{ f^{K_2}, f^{K_{n_2}} \}$. If $b > 0$, then $U = \{ f^{K_{n_2}} \}$. If $b < 0$, then $U = \{ f^{K_{2}} \}$. Now suppose that $a_{n_2 - 1} < -a < s_{n_2}$. Then $g(e_{1}) > g(e_{n_2 - 1})$ and $U = \{ f^{K_{n_2}} \}$.

		Lastly, assume that $-a = s_{n_2}$. The composition of the set $U$ succeeds from a similar analysis as in the preceding paragraph. 
	\end{proof}

\begin{lemma} \label{gammaast}
	Consider the generalized edge-triangle exponential random graph model defined in Equation \ref{get}. Let $\beta_1 = a\beta_2 + b$. Then
		\begin{equation} \label{sup1}
			\lim_{\beta_2 \to -\infty} \sup_{\tilde{f} \in \widetilde{F}^{\ast}(\beta_2)} \left\{ \delta_{\square} \left( \tilde{f}, \widetilde{U} \right) \right\} = 0.
		\end{equation}
	Assume that $\gamma = \gamma^{\ast}$ is defined as above in Equation \ref{critgam}. Also assume that $\gamma_{n}^{\ast}$ and $x_{1}$ are defined as in Lemma \ref{gammalessast}. Then the set $U$ is defined as follows:
		\begin{itemize}
			\item If $-a \le l^{\prime}(x_2)$, then $U = \{f^{K_2}\}$.
			\item If $-a \ge s_{2}$, then $U = \{1\}$.
			\item Suppose $l^{\prime}(x_2) < -a < s_{2}$. Let $n$ be the least positive integer such that $-a \le s_{n}$ and $p_{n} > x_{1}$ where $l^{\prime}(p_{n}) = s_{n}$, thus there is also a corresponding $\gamma_{n}^{\ast}$. Now define $a_{n} = \frac{2(n+1)}{n-1} t_{n}^{\gamma}$. Firstly, if $\gamma^{\ast} > \gamma_{n}^{\ast}$, then the following cases hold.
				\begin{itemize}
					\item If $-a = a_{n-1}$ and $b < 0$ or $-a < a_{n-1}$, then $U = \{ f^{K_{2}} \}$.
					\item If $-a = a_{n-1}$ and $b = 0$, then $U = \{ f^{K_2}, f^{K_{n}} \}$.
					\item If $-a = s_{n}$ and $b < 0$ or $-a = a_{n-1}$ and $b > 0$ or $a_{n} < -a < s_{n}$, then $U = \{ f^{K_{n}} \}$.
					\item If $-a = s_{n}$ and $b = 0$, then $U = \{ f^{K_{n}}, f^{K_{n+1}} \}$.
					\item If $-a = s_{n}$ and $b > 0$, then $U = \{ f^{K_{n+1}} \}$.
				\end{itemize}
			Lastly, if $\gamma^{\ast} < \gamma_{n}^{\ast}$, then $U = \{ f^{K_2} \}$.
		\end{itemize}
\end{lemma}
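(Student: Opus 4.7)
The plan is to follow the same variational reduction used in Lemma \ref{gammalessast}: as $\beta_2 \to -\infty$, the lower-order piece $be - I(\tilde f)$ in Equation \ref{limnorm} stays bounded on $\widetilde{\mathcal W}$, so the problem reduces to minimizing $g(e) = ae + r(e)$ on $[0,1]$. Because $\gamma^{\ast} \le 1$, Lemma \ref{inflect} gives that every Razborov segment $r_k^{\gamma^{\ast}}$ is strictly concave down on its interval $I_k$, so any minimizer of $g$ is a breakpoint $e_k$ or the right endpoint $1$.

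Next I would exploit the defining identity $s_2 = 3\gamma^{\ast}$. By Lemma \ref{sk}, for $\gamma^{\ast}$ in this regime $\{s_k\}$ first decreases and then increases with limit $3\gamma^{\ast}$, so at $\gamma = \gamma^{\ast}$ we have $s_k < s_2$ for every $k \ge 3$. This yields two clean extremal regimes. If $-a \ge s_2$, successive applications of $-a > s_k$ give $g(e_k) < g(e_{k-1})$ for every $k \ge 2$, so the minimum is at $1$ and $U = \{1\}$. If $-a \le l'(x_2)$, the mean value identity $s_k = l'(p_k)$ combined with $l'(x_2) = \min_{[1/2,1]} l'$ yields $-a \le s_k$ for every $k$, so $g(e_{k-1}) \le g(e_k)$ and the minimum is at $e_1 = 1/2$, giving $U = \{f^{K_2}\}$. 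Note that the plateau $3\gamma \le -a < s_2$ of Lemma \ref{gammalessast} collapses at $\gamma = \gamma^{\ast}$ and is absorbed into the boundary value $-a = s_2$, so no separate comparison of $g(e_1) = a/2$ against $g(1) = a + 1$ is required here.

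The bulk of the work is the intermediate regime $l'(x_2) < -a < s_2$. I would mirror the $n_1, n_2$ bookkeeping of Lemma \ref{gammalessast}: letting $n$ be the least index with $-a \le s_n$ and $p_n > x_2$ (so $n$ sits on the increasing side of $\{s_k\}$), the monotonicity of $\{s_k\}$ on each side of its minimum reduces the candidate minimizers among the breakpoints to $\{e_1, e_{n-1}, e_n\}$. The same direct computation as in Lemma \ref{gammalessast} gives
\[
g(e_1) > g(e_{n-1}) \iff -a > a_{n-1}, \qquad a_{n-1} = \frac{2n}{n-2}\, t_{n-1}^{\gamma^{\ast}},
\]
and the placement of $a_{n-1}$ relative to $s_n$ is controlled by the equivalence $a_{n-1} > s_n \iff \gamma^{\ast} < \gamma_n^{\ast}$. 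Combining these two comparisons with the $b$-dependent tiebreakers at the degenerate points $-a \in \{a_{n-1}, s_n\}$ reproduces the case list in the statement; in particular, when $\gamma^{\ast} < \gamma_n^{\ast}$ one has $a_{n-1} > s_n \ge -a$ automatically, so $g(e_1) < g(e_{n-1})$ and $U = \{f^{K_2}\}$ with no further splitting.

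The main obstacle I anticipate is organizational rather than analytical: the case split must be carried through at the single transcendental value $\gamma = \gamma^{\ast}$, and to match the statement one must verify that the coincidence $\gamma^{\ast} = \gamma_n^{\ast}$ does not occur for any of the indices $n$ arising here, so that the dichotomy uses only strict inequalities and no $a_{n-1} = s_n$ sub-case appears. Once this is in place, the argument is structurally identical to that of Lemma \ref{gammalessast}, with the simplification that the plateau $3\gamma \le -a < s_2$ of that lemma has collapsed at the critical value $\gamma^{\ast}$.
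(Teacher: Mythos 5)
Your argument is correct and is essentially the route the paper intends: the paper's own proof is a one-line reference to Lemma \ref{gammalessast}, and you have reconstructed that argument faithfully, including the key observation that at $\gamma=\gamma^{\ast}$ one has $s_{2}=3\gamma^{\ast}=\lim_{k}s_{k}$, so that $s_{k}<s_{2}$ for all $k\ge3$ and the plateau regime $3\gamma\le -a<s_{2}$ of Lemma \ref{gammalessast} collapses. Two small remarks are worth keeping in mind. First, you correctly use $x_{2}$ (the interior minimizer of $l'$ on $[1/2,1]$) as the threshold governing $p_{n}$, whereas the \emph{statements} of both Lemma \ref{gammalessast} and Lemma \ref{gammaast} write ``$p_{n}>x_{1}$''; since $x_{1}<1/4<1/2$ the latter condition is vacuous and must be a typo for $x_{2}$, which is what the proof of Lemma \ref{gammalessast} actually uses — your version is the right one. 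Second, your concern about the missing $\gamma^{\ast}=\gamma_{n}^{\ast}$ sub-case is a genuine observation about the lemma as stated: the paper silently omits the case $a_{n-1}=s_{n}$, and a complete proof should either verify that the single transcendental value $\gamma^{\ast}=W_{0}(2\ln(9/2))/\ln(9/2)$ never coincides with any $\gamma_{n}^{\ast}$, or include the corresponding tie-breaking sub-case exactly as in Lemma \ref{gammalessast}. This is a defect in the paper's statement rather than in your argument, but you are right to flag that one of these two repairs must be made before the case list can be considered exhaustive.
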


\begin{proof}
	The proof of this lemma follows in a similar fashion to the proof of Lemma \ref{gammalessast} and is therefore omitted. 
\end{proof}

\begin{lemma} \label{gammagreaterast}
	Consider the generalized edge-triangle exponential random graph model defined in Equation \ref{get}. Let $\beta_1 = a\beta_2 + b$. Then
		\begin{equation} \label{sup3}
			\lim_{\beta_2 \to -\infty} \sup_{\tilde{f} \in \widetilde{F}^{\ast}(\beta_2)} \left\{ \delta_{\square} \left( \tilde{f}, \widetilde{U} \right) \right\} = 0.
		\end{equation}
	Assume that $\gamma^{\ast} < \gamma < \log_{\frac{27}{16}}(3/2)$, where $\gamma^{\ast}$ is defined in Equation \ref{critgam}. Also assume that $\gamma_{n}^{\ast}$ and $x_2$ are defined as in Lemma \ref{gammalessast}. Then the set $U$ is defined as follows:
		\begin{itemize}
			\item If $-a \le l^{\prime}(x_2)$, then $U = \{f^{K_2}\}$.
			\item If $-a \ge 3\gamma$, then $U = \{1\}$.
			\item If $s_{2} < -a < 3\gamma$, then the following cases are possible. 
				\begin{itemize}
					\item If $a > -2$ or $a = -2$ and $b < 0$, then $U = \{ f^{K_{2}} \}$.
					\item If $a = -2$ and $b = 0$, then $U = \{ f^{K_2}, 1\}$.
					\item If $a < -2$ or $a = -2$ and $b > 0$, then $U = \{ 1 \}$.
				\end{itemize}
			\item Suppose $l^{\prime}(x_2) < -a < s_{2}$. Let $n$ be the least positive integer such that $-a \le s_{n}$ and $p_{n} > x_{1}$ where $l^{\prime}(p_{n}) = s_{n}$, thus there is also a corresponding $\gamma_{n}^{\ast}$. Now define $a_{n} = \frac{2(n+1)}{n-1} t_{n}^{\gamma}$. Firstly, if $\gamma > \gamma_{n}^{\ast}$, then the following cases are possible.
				\begin{itemize}
					\item If $-a < a_{n-1}$ or $-a = a_{n-1}$ and $b < 0$, then $U = \{ f^{K_{2}} \}$.
					\item If $-a = a_{n-1}$ and $b = 0$, then $U = \{ f^{K_2}, f^{K_{n}} \}$.
					\item If $-a = s_{n}$ and $b < 0$ or $a_{n-1} < -a < s_{n}$ or $-a = a_{n-1}$ and $b > 0$, then $U = \{ f^{K_{n}} \}$.
					\item If $-a = s_{n}$ and $b = 0$, then $U = \{ f^{K_{n}}, f^{K_{n+1}} \}$.
					\item If $-a = s_{n}$ and $b > 0$, then $U = \{ f^{K_{n+1}} \}$.
				\end{itemize}
			Secondly, if $\gamma < \gamma_{n}^{\ast}$, then $U = \{ f^{K_2} \}$. Lastly, consider $\gamma = \gamma_{n}^{\ast}$ giving the following scenarios.
				\begin{itemize}
					\item If $-a < s_{n}$ or $-a = s_{n}$ and $b < 0$, then $U = \{ f^{K_2} \}$.
					\item If $-a = s_{n}$ and $b = 0$, then $U = \{ f^{K_2}, f^{K_{n}}, f^{K_{n+1}}\}$.
					\item If $-a = s_{n}$ and $b > 0$, then $U = \{ f^{K_{n+1}} \}$.
				\end{itemize}
		\end{itemize}
\end{lemma}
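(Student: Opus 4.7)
The plan is to mirror the proof of Lemma \ref{gammalessast}, adjusting the case analysis to the regime where $s_2 < 3\gamma$ rather than $s_2 > 3\gamma$. The variational reduction is identical: since $\beta_2 \to -\infty$ while $be - I(\tilde f)$ stays bounded, the supremum in the limiting normalization constant collapses to minimizing
\[
g(e) = ae + \sum_{k=1}^{\infty} r_k(e)^{\gamma} \1{I_k}(e)
\]
over $e \in [0,1]$. Because $\gamma \le 1$, Lemma \ref{inflect} guarantees each segment $r_k^\gamma$ is strictly concave, so each piece of $g$ is concave and the global minimum must be attained at one of the breakpoints $e_k$ or at the right endpoint $e = 1$, to which $g(e_k)$ converges. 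The identity
\[
g(e_k) - g(e_{k-1}) = \frac{1}{k(k+1)}(a + s_k)
\]
makes the monotonicity of $g$ along breakpoints depend entirely on the sign of $a + s_k$.

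From here I would partition the problem by the position of $-a$ relative to $l'(x_2)$ (the smallest slope realized along any Razborov segment, surfacing from the mean-value analysis in the proof of Lemma \ref{sk}), the initial value $s_2$, and the asymptotic value $3\gamma = \lim_k s_k$. The two extreme cases, $-a \le l'(x_2)$ and $-a \ge 3\gamma$, are immediate: $g$ is strictly monotone along the breakpoints, forcing the minimum to $e_1$ or to the limit $1$ respectively, yielding $U = \{f^{K_2}\}$ or $U = \{1\}$. The intermediate case $s_2 < -a < 3\gamma$ replaces the case $3\gamma \le -a < s_2$ of Lemma \ref{gammalessast}: now the sequence $\{s_k\}$ starts below $-a$, dips lower, then rises back toward $3\gamma > -a$, so the sign of $a + s_k$ switches at most once. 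The candidates for the minimum collapse to $\{e_1, 1\}$, and comparing $g(e_1) = a/2$ against $g(1) = a + 1$ produces the three-fold split by the sign of $a + 2$, refined by the sign of $b$ at $a = -2$.

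The main obstacle is the remaining case $l'(x_2) < -a < s_2$, where the non-monotonicity of $\{s_k\}$ allows $a + s_k$ to change sign twice: once on the initial decreasing branch of $\{s_k\}$ and once on the subsequent increasing branch. Following the strategy of Lemma \ref{gammalessast}, I would let $n$ be the least positive integer with $-a \le s_n$ and $p_n > x_1$ (isolating the upward-slope branch, as in the hypotheses of the lemma), which localizes the minimum to $\{e_1, e_{n-1}, e_n\}$ by tracking sign changes of $a + s_k$. The decisive algebraic steps are the pair of equivalences
\[
g(e_1) \lessgtr g(e_{n-1}) \iff -a \lessgtr a_{n-1}, \qquad a_{n-1} \lessgtr s_n \iff \gamma \lessgtr \gamma_n^\ast,
\]
where $a_{n-1} = \frac{2(n+1)}{n-1} t_{n-1}^\gamma$; these reproduce the three-way split of $\gamma$ relative to $\gamma_n^\ast$ already appearing in Lemma \ref{gammalessast}. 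The remaining bookkeeping records the subcases on $b$ at the boundary values $-a = a_{n-1}$ and $-a = s_n$, with no genuinely new estimates needed beyond those already established for Lemma \ref{gammalessast}.
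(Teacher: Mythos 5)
Your overall plan---reducing the variational problem to minimizing $g(e) = ae + \sum_k r_k(e)^\gamma \1{I_k}(e)$, noting that concavity of each segment ($\gamma \le 1$) pushes the minimizer onto the breakpoints $e_k$ or onto $e=1$, and tracking the sign of $a + s_k$ across the decreasing-then-increasing slope sequence---does match the intended adaptation of the proof of Lemma~\ref{gammalessast}, and the extreme cases $-a \le l'(x_2)$ and $-a \ge 3\gamma$ together with the double sign-change case $l'(x_2) < -a < s_2$ are treated correctly. However, there is a genuine gap in the intermediate case $s_2 < -a < 3\gamma$. You correctly note that $a + s_k$ changes sign at most once, but the direction of that change matters: since $-a > s_2$ one has $a + s_2 < 0$, and since $s_k \to 3\gamma > -a$ one has $a + s_k > 0$ for all large $k$. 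The single sign change is therefore from negative to positive, so $g(e_k)$ strictly decreases from $e_1$ down to some $e_m$ with $m \ge 2$ (namely the last index with $s_m < -a$) and then strictly increases toward $g(1)$; the minimum sits at the interior breakpoint $e_m$ with $g(e_m) < g(e_1)$ and $g(e_m) < g(1)$ strictly. The assertion that ``the candidates for the minimum collapse to $\{e_1,1\}$'' does not follow from what precedes it. This is exactly the mirror image of the case $3\gamma \le -a < s_2$ in Lemma~\ref{gammalessast}, where $a + s_k$ goes from positive to negative and the boundary genuinely is the minimizer; the conclusion was transplanted without checking that the sign change reverses direction once $s_2$ and $3\gamma$ swap roles.

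Two smaller remarks. First, the gap above coincides with what appears to be an error in the statement of Lemma~\ref{gammagreaterast} itself: for $s_2 < -a < 3\gamma$ the maximizing set should consist of a Tur\'an graphon $f^{K_{m+1}}$ for an interior index $m$ depending on $a$, not $f^{K_2}$ or the complete graphon, so the case would need to be restated before a correct proof can be supplied. Second, your displayed formula $a_{n-1} = \tfrac{2(n+1)}{n-1}\,t_{n-1}^\gamma$ is inconsistent with the paper's definition $a_n = \tfrac{2(n+1)}{n-1}\,t_n^\gamma$, which gives $a_{n-1} = \tfrac{2n}{n-2}\,t_{n-1}^\gamma$; this looks like a transcription slip but should be corrected before the comparison $g(e_1) \lessgtr g(e_{n-1})$ is invoked.
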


\begin{proof}
	The proof of this lemma follows in a similar fashion to the proof of Lemma \ref{gammalessast} and is therefore omitted. 
\end{proof}

\end{document}